\theoremstyle{definition}
\newtheorem{definition}{Definition}[section]
\newtheorem{example}[definition]{Example}
\newtheorem{remark}[definition]{Remark}
\theoremstyle{plain}
\newtheorem{lemma}[definition]{Lemma}
\newtheorem{proposition}[definition]{Proposition}
\newtheorem{corollary}[definition]{Corollary}
\numberwithin{equation}{section}
\def\N{{\mathbb N}}
\begin{document}
\title{Factor Congruence Lifting Property}
\author{George GEORGESCU and Claudia MURE\c SAN\thanks{Corresponding author.}\\ \footnotesize University of Bucharest\\ \footnotesize Faculty of Mathematics and Computer Science\\ \footnotesize Academiei 14, RO 010014, Bucharest, Romania\\ \footnotesize Emails: georgescu.capreni@yahoo.com; c.muresan@yahoo.com, cmuresan@fmi.unibuc.ro}
\date{\today }
\maketitle

\begin{abstract} In previous work, we have introduced and studied a lifting property in congruence--distributive universal algebras which we have defined based on the Boolean congruences of such algebras, and which we have called the Congruence Boolean Lifting Property. In a similar way, a lifting property based on factor congruences can be defined in congruence--distributive algebras; in this paper we introduce this property, which we have called the Factor Congruence Lifting Property, and study it, partly in relation to the Congruence Boolean Lifting Property, and to other lifting properties in particular classes of algebras.\\ {\em 2010 Mathematics Subject Classification:} Primary: 08B10; secondary: 03C05, 06F35, 03G25.\\ {\em Keywords:} Boolean Lifting Property; Boolean center; lattice; residuated lattice; reticulation; (congruence--distributive, congruence--permutable, arithmetical) algebra; factor congruence.\end{abstract}

\section{Introduction}
\label{introduction}

The Idempotent Lifting Property (abbreviated ILP or LIP), that is the property that every idempotent element can be lifted modulo every left (respectively right) ideal, is intensely studied in ring theory. The ILP is related to important classes of unitary rings: clean rings, exchange rings, Gelfand rings, maximal rings (\cite{banasch}, \cite{mcgov}, \cite{nic} etc.). In \cite{nic}, it is proven that any clean ring has ILP, and that the rings with ILP are exactly exchange rings; furthermore, in the commutative case, clean rings, exchange rings and rings with ILP coincide.

Lifting properties inspired by the ILP have been studied in algebras related to logic: MV--algebras \cite{figele}, BL--algebras \cite{din1}, \cite{leo}, (commutative) residuated lattices \cite{eu3}, \cite{eu}, \cite{ggcm}, \cite{dcggcm}, \cite{eudacs}, \cite{blpiasi}, bounded distributive lattices \cite{ggcm}, \cite{dcggcm}, \cite{blpiasi}. All these kinds of algebras have Boolean centers (subalgebras with a Boolean algebra structure), which allows the so--called Boolean Lifting Properties (BLP) to be defined. In bounded distributive lattices, three significant kinds of BLP naturally occur: the BLP modulo ideals (Id--BLP), the BLP modulo filters (Filt--BLP) and the BLP modulo all congruences (simply, BLP). In residuated lattices, a lifting property for idempotent elements (ILP) has also been studied \cite{dcggcm}. A generalization of these lifting properties to universal algebras, called the $\varphi $--Lifting Properties, have been studied in \cite{dcggcm}, \cite{eudacs}.

In the case of congruence--distributive universal algebras, a notion of Boolean Lifting Property can be defined, based on the Boolean center of the lattice of congruences of such an algebra; we have called it the Congruence Boolean Lifting Property \cite{cblp}. It turns out that the CBLP coincides to the BLP in residuated lattices (which includes BL--algebras and MV--algebras), but differs from the BLP, Id--BLP and Filt--BLP in the case of bounded distributive lattices, where CBLP is always present, unlike the BLP, Id--BLP and Filt--BLP. The study of universal algebras with CBLP is motivated by both their properties, including strong representation theorems and topological characterizations, and by the remarkable classes of universal algebras with CBLP, which include local algebras, discriminator equational classes etc..

As we have already mentioned, for defining the CBLP in a congruence--distributive algebra $A$, we have used ${\cal B}({\rm Con}(A))$, the Boolean center of the lattice of congruences of $A$. The present paper is concerned with the study of the Factor Congruence Lifting Property (FCLP); the FCLP is defined for congruence--distributive universal algebras, like the CBLP, except, instead of being defined starting from ${\cal B}({\rm Con}(A))$, the FCLP is defined based on the Boolean algebra ${\rm FC}(A)$ of the factor congruences of $A$; this Boolean algebra is present in a wider class of universal algebras than that of congruence--distributive algebras, thus the FCLP can be defined for this wider class; its study in this more general context remains a theme for future research; here we restrict our investigation to the context of congruence--distributive algebras, and compare the FCLP to the CBLP. These lifting properties coincide, for instance, in arithmetical algebras, but, in general, they differ, and, moreover, none of them implies the other; we shall see examples of finite non--distributive lattices with FCLP and without CBLP, and vice--versa. In residuated lattices, which are arithmetical algebras, the FCLP, CBLP and BLP coincide, while, in bounded distributive lattices, the FCLP coincides to the BLP, which implies that it differs from the CBLP, the Id--BLP and the Filt--BLP.

In Section \ref{preliminaries} of the present article, we recall some previously known notions and results from universal algebra and lattice theory that we use in the sequel. The results in the following sections are new, with the only exceptions of the results cited from other papers. In Section \ref{fclp}, we introduce the FCLP and obtain its main properties, including its preservation by quotients and finite direct products, a characterization for it through a certain behaviour of factor congruences in the lattice of congruences, and the fact that it coincides to the CBLP in arithmetical algebras. In Section \ref{vsblp}, we compare the FCLP to the CBLP and the BLP in residuated lattices and bounded distributive lattices, and prove that, in general, the CBLP does not imply the FCLP. In Section \ref{examples}, we provide some more properties of the FCLP, as well as many examples in lattices, in which we compare the FCLP to the CBLP; here we prove that the FCLP does not imply the CBLP either.

\section{Preliminaries}
\label{preliminaries}

For the purpose of self--containedness, in this section we present a set of results on the congruences of universal algebras, out of which most are well known, and the rest are straightforward. We refer the reader to \cite{bal}, \cite{blyth}, \cite{bur}, \cite{gralgu} for a further study of the notions and properties we recall here.

We shall denote by $\N $ the set of the natural numbers and by $\N ^*=\N \setminus \{0\}$. Throughout this paper, whenever there is no danger of confusion, any algebra shall be designated by its underlying set. All algebras shall be considerred non--empty, regardless of whether they have constants in their signature; by {\em trivial algebra} we mean an algebra with only one element, and by {\em non--trivial algebra} we mean an algebra with at least two distinct elements. All direct products and quotients of algebras shall be considerred with the operations defined canonically. For any non--empty family $(M_i)_{i\in I}$ of sets and any $\displaystyle M\subseteq \prod _{i\in I}M_i$, by $(x_i)_{i\in I}\in M$ we mean $x_i\in M_i$ for all $i\in I$, such that $(x_i)_{i\in I}\in M$. If we don`t specify otherwise, then we denote the (bounded) lattice operations, the Boolean operations and the partial orders in the usual way: $\vee ,\wedge ,\neg \, ,0,1,\leq $. For any lattice $L$, we denote by ${\rm Filt}(L)$ and ${\rm Id}(L)$ the set of the filters and that of the ideals of $L$, respectively; for any $X\subseteq L$, we shall denote by $[X)$ and $(X]$ the filter, respectively the ideal of $L$ generated by $X$; for any $x\in L$, we denote by $[x)$ the principal filter of $L$ generated by $x$: $[x)=[\{x\})=\{y\in L\ |\ x\leq y\}$. It is well known that bounded lattice morphisms between Boolean algebras are Boolean morphisms and surjective lattice morphisms between bounded lattices are bounded lattice morphisms; also, the congruences of any Boolean algebra coincide to the congruences of its underlying lattice.

Let $\tau $ be an arbitrary but fixed signature of universal algebras. Everywhere in this paper, except where it is mentioned otherwise, by {\em algebra} we shall mean $\tau $--algebra, by {\em morphism} we shall mean morphism of $\tau $--algebras, and {\em isomorphism} shall mean isomorphism of $\tau $--algebras. If $A$ and $B$ are two algebraic structures of the same kind and there is no danger of confusion, we shall denote by $A\cong B$ the fact that $A$ and $B$ are isomorphic.

Throughout the rest of this section, $A$ shall be an arbitrary algebra, unless mentioned otherwise. We shall denote by ${\rm Con}(A)$ the set of the congruences of $A$, by $\Delta _{A}=\{(a,a)\ |\ a\in A\}$ and by $\nabla _{A}=A^2$. Clearly, the algebra $A$ is non--trivial iff $\Delta _{A}\neq \nabla _{A}$. We shall denote by ${\rm Max}(A)$ the set of the {\em maximal congruences} of $A$, that is the maximal elements of $({\rm Con}(A)\setminus \{A\},\subseteq )$. $A$ is called a {\em local algebra} iff it has exactly one maximal congruence, and it is called a {\em semilocal algebra} iff it has only a finite number of maximal congruences. See in \cite{bur}, \cite{gralgu}, \cite{dcggcm}, \cite{cblp} the definition of a maximal algebra, and the property that all maximal algebras are semilocal algebras. Also, we shall denote by ${\rm Spec}(A)$ the set of the {\em prime congruences} of $A$, that is the congruences $\theta $ of $A$ which fulfill this condition: for all $\alpha ,\beta \in {\rm Con}(A)$, if $\alpha \cap \beta \subseteq \theta $, then $\alpha \subseteq \theta $ or $\beta \subseteq \theta $. For any $M\subseteq A^2$, we denote by $Cg_{A}(M)$ the congruence of $A$ generated by $M$; for any $a,b\in A$, the principal congruence $Cg_{A}(\{(a,b)\})$ shall also be denoted by $Cg_{A}(a,b)$. It is well known that $({\rm Con}(A),\vee ,\cap ,\Delta _{A},\nabla _{A})$ is a bounded lattice, where, for all $\phi ,\psi \in {\rm Con}(A)$, $\phi \vee \psi =Cg_{A}(\phi \cup \psi)$, and with $\subseteq $ as partial order; moreover, ${\rm Con}(A)$ is a complete lattice, in which, for any family $(\theta _i)_{i\in I}\subseteq {\rm Con}(A)$, $\displaystyle \bigvee _{i\in I}\theta _i=Cg_{A}(\bigcup _{i\in I}\theta _i)$. A congruence $\theta $ of $A$ is said to be {\em finitely generated} iff there exists a finite subset $X$ of $A^2$ such that $\theta =Cg_{A}(X)$; clearly, $\theta =Cg_{A}(\theta )$, so, if $\theta $ is finite, then it is finitely generated. The {\em radical} of $A$, denoted by ${\rm Rad}(A)$, is the intersection of the maximal congruences of $A$, which is a congruence of $A$ by the above. For any $\phi ,\psi \in {\rm Con}(A)$, we denote by $\phi \circ \psi $ the composition of $\phi $ with $\psi $: $\phi \circ \psi =\{(a,b)\in A^2\ |\ (\exists \, x\in A)\, ((a,x)\in \psi ,(x,b)\in \phi )\}$; note that $\phi \circ \psi $ is not always a congruence of $A$, and that $\phi \cup \psi \subseteq \phi \circ \psi $, because $\phi $ and $\psi $ are reflexive, that is $\phi \supseteq \Delta _{A}$ and $\psi \supseteq \Delta _{A}$, thus $\phi =\phi \circ \Delta _{A}\subseteq \phi \circ \psi $ and $\psi =\Delta _{A}\circ \psi \subseteq \phi \circ \psi $.

The algebra $A$ is said to be {\em congruence--distributive} iff the lattice ${\rm Con}(A)$ is distributive. $A$ is said to be {\em congruence--permutable} iff $\phi \circ \psi =\psi \circ \phi $ for all $\phi ,\psi \in {\rm Con}(A)$. $A$ is said to be {\em arithmetical} iff it is both congruence--distributive and congruence--permutable. For instance, it is well known that lattices are congruence--distributive algebras, and that Boolean algebras are arithmetical algebras.

Let $B$ be an algebra and $f:A\rightarrow B$ be a morphism. We denote by ${\rm Ker}(f)=\{(x,y)\in A^2\ |\ f(x)=f(y)\}$ the kernel of $f$. Clearly, ${\rm Ker}(f)\in {\rm Con}(A)$. For any $M\subseteq A^2$ and any $N\subseteq B^2$, we denote: $f(M)=\{(f(x),f(y))\ |\ (x,y)\in M\}$ and $f^{-1}(N)=\{(x,y)\in A^2\ |\ (f(x),f(y))\in N\}$. It is straightforward that, for any $\phi \in {\rm Con}(A)$ and any $\psi \in {\rm Con}(B)$, the following hold: $f^{-1}(\psi )\in {\rm Con}(A)$ and $f(f^{-1}(\psi ))=\psi $; if ${\rm Ker}(f)\subseteq \phi $, then $f(\phi )\in {\rm Con}(B)$ and $f^{-1}(f(\phi ))=\phi $. Therefore, if a $\theta \in {\rm Con}(A)$ has the property that ${\rm Ker}(f)\subseteq \theta $, then the mapping $\alpha \mapsto f(\alpha )$ is a bounded lattice isomorphism between the sublattice $[\theta )=\{\alpha \in {\rm Con}(A)\ |\ \theta \subseteq \alpha \}$ of ${\rm Con}(A)$ and ${\rm Con}(B)$, whose inverse maps $\beta \mapsto f^{-1}(\beta )$.

For any $\theta \in {\rm Con}(A)$, we shall denote by $A/\theta $ the quotient algebra of $A$ through $\theta $. Obviously, if $A/\theta $ is non--trivial, then so is $A$, thus, if $\Delta _{A/\theta }\neq \nabla _{A/\theta }$, then $\Delta _{A}\neq \nabla _{A}$. For any $a\in A$ and any $X\subseteq A$, we denote by $a/\theta $ the congruence class of $a$ with respect to $\theta $, and by $X/\theta =\{x/\theta \ |\ x\in X\}$. We shall denote by $p_{\theta }:A\rightarrow A/\theta $ the canonical surjective morphism: $p_{\theta }(a)=a/\theta $ for all $a\in A$. We also denote, for any $M\subseteq A^2$, by $M/\theta =p_{\theta }(M)=\{(a/\theta ,b/\theta )\ |\ (a,b)\in M\}$. Clearly, ${\rm Ker}(p_{\theta })=\theta $, hence, by the above, the mapping $\alpha \mapsto p_{\theta }(\alpha )=\alpha /\theta $ is a bounded lattice isomorphism between $[\theta )$ and ${\rm Con}(A/\theta )$, whose inverse maps $\beta \mapsto p_{\theta }^{-1}(\beta )$. We shall denote by $s_{\theta }:{\rm Con}(A/\theta )\rightarrow [\theta )$ the bounded lattice isomorphism defined by: 
$s_{\theta }(\beta )=p_{\theta }^{-1}(\beta )=\{(a,b)\in A^2\ |\ (a/\theta ,b/\theta )\in \beta \}$ for all $\beta \in {\rm Con}(A/\theta )$.

Now let $\theta \in {\rm Con}(A)$ and $\alpha ,\beta \in [\theta )$, arbitrary. Since $s_{\theta }^{-1}:[\theta )\rightarrow {\rm Con}(A/\theta )$, $s_{\theta }^{-1}(\gamma )=p_{\theta }(\gamma )=\gamma /\theta $ for all $\gamma \in [\theta )$, is a lattice isomorphism, and thus it is injective, we have: $\alpha /\theta =\beta /\theta $ iff $\alpha =\beta $. Notice, moreover, that, for any $a,b\in A$, the following equivalence holds: $(a/\theta ,b/\theta )\in \alpha /\theta $ iff $(a,b)\in \alpha $. Indeed, $(a,b)\in \alpha $ implies $(a/\theta ,b/\theta )\in \alpha /\theta $ by the very definition of $\alpha /\theta $; conversely, if $(a/\theta ,b/\theta )\in \alpha /\theta $, then there exist $a^{\prime },b^{\prime }\in A$ such that $a^{\prime }/\theta =a/\theta $, $b^{\prime }/\theta =b/\theta $ and $(a^{\prime },b^{\prime })\in \alpha $, which means that $(a,a^{\prime })\in \theta \subseteq \alpha $, $(a^{\prime },b^{\prime })\in \alpha $ and $(b^{\prime },b)\in \theta \subseteq \alpha $, hence $(a,b)\in \alpha $ by the transitivity of $\alpha $.

Now let us notice that $\alpha \circ \beta \in [\theta )$ and $(\alpha \circ \beta )/\theta =\alpha /\theta \circ \beta /\theta $. First, since $\beta $ is reflexive, it follows that $\alpha \circ \beta \supseteq \alpha \circ \Delta _{A}=\alpha \supseteq \theta $, so $\alpha \circ \beta \in [\theta )$. Now let $a,b\in A$. If $(a/\theta ,b/\theta )\in (\alpha \circ \beta )/\theta $, then there exist $a^{\prime },b^{\prime }\in A$ such that $a^{\prime }/\theta =a/\theta $, $b^{\prime }/\theta =b/\theta $ and $(a^{\prime },b^{\prime })\in \alpha \circ \beta $, which means that there exists an $x\in A$ such that $(a^{\prime },x)\in \beta $ and $(x,b^{\prime })\in \alpha $; then $(a^{\prime }/\theta ,x/\theta )=(a/\theta ,x/\theta )\in \beta /\theta $ and $(x/\theta ,b^{\prime }/\theta )=(x/\theta ,b/\theta )\in \alpha /\theta $, thus $(a/\theta ,b/\theta )\in \alpha /\theta \circ \beta /\theta $. Conversely, if $(a/\theta ,b/\theta )\in \alpha /\theta \circ \beta /\theta $, then there exists an $x\in A$ such that $(a/\theta ,x/\theta )\in \beta /\theta $ and $(x/\theta ,b/\theta )\in \alpha /\theta $; by the above, this is equivalent to $(a,x)\in \beta $ and $(x,b)\in \alpha $, which implies $(a,b)\in \alpha \circ \beta $, thus $(a/\theta ,b/\theta )\in (\alpha \circ \beta )/\theta $. So indeed $(\alpha \circ \beta )/\theta =\alpha /\theta \circ \beta /\theta $, thus, since $\alpha $ and $\beta $ are arbitrary: $\alpha /\theta \circ \beta /\theta =\beta /\theta \circ \alpha /\theta $ iff $(\alpha \circ \beta )/\theta =(\beta \circ \alpha )/\theta $ iff $\alpha \circ \beta =\beta \circ \alpha $ by the above. Since $s_{\theta }^{-1}$ is surjective, that is ${\rm Con}(A/\theta )=s_{\theta }^{-1}({\rm Con}(A))=\{\gamma /\theta \ |\ \gamma \in [\theta )\}$, it follows that: $A/\theta $ is congruence--permutable iff the congruences in $[\theta )$ permute with respect to composition; in particular, if $A$ is congruence--permutable, then $A/\theta $ is congruence--permutable.

Throughout the rest of this section, the algebra $A$ shall be congruence--distributive and $\theta \in {\rm Con}(A)$. Since ${\rm Con}(A)$ is distributive, it follows that $[\theta )$ is distributive, hence ${\rm Con}(A/\theta )$ is distributive since ${\rm Con}(A/\theta )\cong [\theta )$.

Let us note, from the above, that all the quotient algebras of a congruence--distributive algebra are congruence--distributive, and all the quotient algebras of a congruence--permutable algebra are congruence--permutable. Consequently, all the quotient algebras of an arithmetical algebra are arithmetical.

\begin{lemma}{\rm \cite[Theorem $2.3$, (iii)]{bj}} For any $M\subseteq A^2$, $Cg_{A/\theta }(M/\theta )=(Cg_{A}(M)\vee \theta )/\theta $.\label{l2.1}\end{lemma}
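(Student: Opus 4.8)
The plan is to prove the claimed equality by exploiting the bounded lattice isomorphism $s_{\theta }:{\rm Con}(A/\theta )\rightarrow [\theta )$ recalled in the Preliminaries, together with the universal (minimality) property of generated congruences. First I would observe that both sides of the desired identity are congruences of $A/\theta $: the left side $Cg_{A/\theta }(M/\theta )$ is one by definition, while the right side $(Cg_{A}(M)\vee \theta )/\theta =s_{\theta }^{-1}(Cg_{A}(M)\vee \theta )$ is one because $Cg_{A}(M)\vee \theta \in [\theta )$ and $s_{\theta }^{-1}$ maps $[\theta )$ onto ${\rm Con}(A/\theta )$. Hence it suffices to establish the two inclusions.

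For the inclusion ``$\subseteq$'', I would note that $M\subseteq Cg_{A}(M)\subseteq Cg_{A}(M)\vee \theta $, so applying $p_{\theta }$ to these sets of pairs yields $M/\theta \subseteq (Cg_{A}(M)\vee \theta )/\theta $. Since the right--hand side is a congruence of $A/\theta $ containing $M/\theta $, the minimality of the generated congruence $Cg_{A/\theta }(M/\theta )$ forces $Cg_{A/\theta }(M/\theta )\subseteq (Cg_{A}(M)\vee \theta )/\theta $.

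For the reverse inclusion ``$\supseteq$'', the key step is to consider $s_{\theta }(Cg_{A/\theta }(M/\theta ))=p_{\theta }^{-1}(Cg_{A/\theta }(M/\theta ))$, which is a congruence of $A$ lying in $[\theta )$, hence containing $\theta $; it also contains $M$, because each $(a,b)\in M$ satisfies $(a/\theta ,b/\theta )\in M/\theta \subseteq Cg_{A/\theta }(M/\theta )$, i.e. $(a,b)\in p_{\theta }^{-1}(Cg_{A/\theta }(M/\theta ))$. Being a congruence that contains both $M$ and $\theta $, it must contain $Cg_{A}(M)$ and $\theta $, hence their join $Cg_{A}(M)\vee \theta $. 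Applying the monotone map $s_{\theta }^{-1}=(\cdot )/\theta $ and using $s_{\theta }^{-1}\circ s_{\theta }={\rm id}$ then gives $(Cg_{A}(M)\vee \theta )/\theta \subseteq Cg_{A/\theta }(M/\theta )$, which completes the argument.

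I do not expect a genuine obstacle here, since every ingredient---the correspondence $s_{\theta }$ between $[\theta )$ and ${\rm Con}(A/\theta )$, the identity $p_{\theta }^{-1}(p_{\theta }(\Phi ))=\Phi $ for $\Phi \in [\theta )$, and the minimality of generated congruences---is already recorded in the Preliminaries; in fact congruence--distributivity is not even used. The only point requiring care is the bookkeeping with $p_{\theta }$ and $p_{\theta }^{-1}$: one must keep straight that $s_{\theta }=p_{\theta }^{-1}$ and $s_{\theta }^{-1}=(\cdot )/\theta $ are mutually inverse order isomorphisms, so that pulling back a generated congruence and then pushing it forward returns the original object.
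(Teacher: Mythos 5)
Your proof is correct in all details, but there is nothing in the paper to compare it against: the paper does not prove Lemma \ref{l2.1} at all, citing it as Theorem $2.3$, (iii) of J\'onsson's survey \cite{bj}. Your argument is the standard self--contained one via the correspondence isomorphism, and every step checks out: the inclusion $Cg_{A/\theta }(M/\theta )\subseteq (Cg_{A}(M)\vee \theta )/\theta $ follows from minimality of the generated congruence once one knows $(Cg_{A}(M)\vee \theta )/\theta =s_{\theta }^{-1}(Cg_{A}(M)\vee \theta )\in {\rm Con}(A/\theta )$, and the reverse inclusion follows by pulling back: $\Psi =p_{\theta }^{-1}(Cg_{A/\theta }(M/\theta ))$ is a congruence of $A$ containing $M$ (since each $(a,b)\in M$ has $(a/\theta ,b/\theta )\in Cg_{A/\theta }(M/\theta )$) and containing $\theta ={\rm Ker}(p_{\theta })$, hence containing $Cg_{A}(M)\vee \theta $, and applying the monotone direct image together with the identity $p_{\theta }(p_{\theta }^{-1}(\beta ))=\beta $ --- both recorded in the Preliminaries --- yields $(Cg_{A}(M)\vee \theta )/\theta \subseteq Cg_{A/\theta }(M/\theta )$. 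Your side remark is also accurate: although the paper states the lemma in a paragraph where $A$ has already been assumed congruence--distributive, distributivity plays no role, and the result holds for arbitrary algebras, which is the level of generality at which it is proved in \cite{bj}.
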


Now let us consider the functions: $u_{\theta }:{\rm Con}(A)\rightarrow {\rm Con}(A/\theta )$ and $v_{\theta }:{\rm Con}(A)\rightarrow [\theta )$, defined by: $u_{\theta }(\alpha )=(\alpha \vee \theta )/\theta $ and $v_{\theta }(\alpha )=\alpha \vee \theta $ for all $\alpha \in {\rm Con}(A)$. Then, clearly, $u_{\theta }$ and $v_{\theta }$ are bounded lattice morphisms, and the following diagram is commutative (see also \cite{cblp}):\vspace*{-10pt}

\begin{center}\begin{picture}(120,53)(0,0)
\put(0,30){${\rm Con}(A)$}
\put(35,33){\vector (1,0){40}}
\put(15,27){\vector (3,-2){33}}
\put(94,26){\vector (-3,-2){33}}
\put(50,38){$u_{\theta }$}
\put(23,11){$v_{\theta }$}
\put(80,10){$s_{\theta }$}
\put(76,30){${\rm Con}(A/\theta )$}
\put(50,0){$[\theta )$}
\end{picture}\end{center}

Throughout the following sections, we shall keep the notations for the surjective morphism $p_{\theta }$, the bounded lattice morphisms $u_{\theta },v_{\theta }$ and the bounded lattice isomorphism $s_{\theta }$, for any congruence--distributive algebra $A$ and any $\theta \in {\rm Con}(A)$. In the same context, we shall denote by $\neg _{\theta }$ the complementation in the Boolean algebra ${\cal B}([\theta ))$. 

\begin{remark} For all $\alpha \in [\theta )$, $v_{\theta }(\alpha )=\alpha \vee \theta =\alpha $, thus $v_{\theta }$ is surjective. Since $v_{\theta }=s_{\theta }\circ u_{\theta }$ and $s_{\theta }$ is bijective, it follows that $u_{\theta }$ is surjective, as well.\label{4***}\end{remark}

\begin{remark} Let $n\in \N ^*$ and $A_1,\ldots ,A_n$ be algebras. If $\theta _i\in {\rm Con}(A_i)$ for all $i\in \overline{1,n}$, then we denote by $\theta _1\times \ldots \times \theta _n=\{((x_1,\ldots ,x_n),(y_1,\ldots ,y_n))\ |\ (\forall \, i\in \overline{1,n})\, ((x_i,y_i)\in \theta _i)\}$; it is immediate that $\displaystyle \theta _1\times \ldots \times \theta _n\in {\rm Con}(\prod_{i=1}^nA_i)$. Furthermore, according to \cite[Corollary $4.2$]{isk}, the mapping $(\theta _1,\ldots ,\theta _n)\mapsto \theta _1\times \ldots \times \theta _n$ is a bounded lattice isomorphism from $\displaystyle \prod_{i=1}^n{\rm Con}(A_i)$ to $\displaystyle {\rm Con}(\prod_{i=1}^nA_i)$ which preserves $\circ $, that is, if $\alpha _i,\beta _i\in {\rm Con}(A_i)$ for all $i\in \overline{1,n}$, then $(\alpha _1\times \ldots \times \alpha _n)\circ (\beta _1\times \ldots \times \beta _n)=(\alpha _1\circ \beta _1)\times \ldots \times (\alpha _n\circ \beta _n)$. Consequently, if $A_1,\ldots ,A_n$ are congruence--distributive (respectively congruence--permutable, respectively arithmetical), then so is $\displaystyle \prod_{i=1}^nA_i$.

Now let $\displaystyle A=\prod_{i=1}^nA_i$. Then $\displaystyle A/\theta \cong \prod_{i=1}^nA_i/\theta _i$. Indeed, let us define $\displaystyle j:\prod_{i=1}^nA_i/\theta _i\rightarrow A/\theta $ by: for all $a_1\in A_1,\ldots ,a_n\in A_n$, $j(a_1/\theta _1,\ldots ,a_n/\theta _n)=(a_1,\ldots ,a_n)/\theta $. Then, clearly, $j$ is surjective. Now let $a_1,b_1\in A_1,\ldots ,a_n,b_n\in A_n$, such that $j(a_1/\theta _1,\ldots ,a_n/\theta _n)=j(b_1/\theta _1,\ldots ,b_n/\theta _n)$, that is $(a_1,\ldots ,a_n)/\theta =(b_1,\ldots ,b_n)/\theta $, so that $((a_1,\ldots ,a_n),(b_1,\ldots ,b_n))\in \theta =\theta _1\times \ldots \times \theta _n$, which means that, for all $i\in \overline{1,n}$, $(a_i,b_i)\in \theta _i$, that is, for all $i\in \overline{1,n}$, $(a_i,b_i)\in \theta _i$, $a_i/\theta _i=b_i/\theta _i$, so $(a_1/\theta _1,\ldots ,a_n/\theta _n)=(b_1/\theta _1,\ldots ,b_n/\theta _n)$. Hence $j$ is injective. It is straightforward that $j$ is a morphism. Therefore $j$ is an isomorphism.\label{oremarca}\end{remark}

So let us note that finite direct products of congruence--distributive algebras are congruence--distributive, and finite direct products of congruence--permutable algebras are congruence--permutable, hence finite direct products of arithmetical algebras are arithmetical.

For any bounded distributive lattice $L$, we denote by ${\cal B}(L)$ the {\em Boolean center} of $L$, that is the Boolean sublattice of $L$ made of the complemented elements of $L$, which, obviously, is the largest Boolean sublattice of $L$. If $M$ is also a bounded distributive lattice and $f:L\rightarrow M$ is a bounded lattice morphism, then $f({\cal B}(L))\subseteq {\cal B}(M)$, thus we can define ${\cal B}(f)=f\mid _{{\cal B}(L)}:{\cal B}(L)\rightarrow {\cal B}(M)$, which is a bounded lattice morphism between two Boolean algebras, and thus it is a Boolean morphism. In this way, ${\cal B}$ becomes a covariant functor from the category of bounded distributive lattices to the category of Boolean algebras.

We shall call the congruences from ${\cal B}({\rm Con}(A))$ the {\em Boolean congruences} of $A$. A congruence $\phi $ of $A$ is called a {\em factor congruence} iff there exists a congruence $\phi ^*$ of $A$ such that $\phi \vee \phi ^*=\nabla _{A}$, $\phi \cap \phi ^*=\Delta _{A}$ and $\phi \circ \phi ^*=\phi ^*\circ \phi $; in this case, $(\phi ,\phi ^*)$ is called a {\em pair of factor congruences}. We denote by ${\rm FC}(A)$ the set of the factor congruences of $A$. Clearly, if $(\phi ,\phi ^*)$ is a pair of factor congruences, then $\phi ^*\in {\rm FC}(A)$ and it is uniquely determined by $\phi ^*=\neg \, \phi $, and hence ${\rm FC}(A)=\{\phi \in {\cal B}({\rm Con}(A))\ |\ \phi \circ \neg \, \phi =\neg \, \phi \circ \phi \}$. In other words, the factor congruences of $A$ are the Boolean congruences of $A$ which permute with their complement with respect to composition. Thus, if the algebra $A$ is arithmetical, then ${\rm FC}(A)={\cal B}({\rm Con}(A))$. Clearly, $(\Delta _{A},\nabla _{A})$ is a pair of factor congruences of $A$. Moreover, according to \cite{cjt}, ${\rm FC}(A)$ is a Boolean sublattice of ${\rm Con}(A)$, and thus a Boolean subalgebra of ${\cal B}({\rm Con}(A))$ (see also \cite{isk}). Consequently, if ${\cal B}({\rm Con}(A))=\{\Delta _{A},\nabla _{A}\}$, then ${\rm FC}(A)={\cal B}({\rm Con}(A))=\{\Delta _{A},\nabla _{A}\}$; also, if ${\rm Con}(A)=\{\Delta _{A},\nabla _{A}\}$, then ${\rm FC}(A)={\cal B}({\rm Con}(A))={\rm Con}(A)=\{\Delta _{A},\nabla _{A}\}$.

Let $S$ be an arbitrary set. We shall denote by ${\rm Eq}(S)$ the set of the equivalences on $S$ and, for any $\rho \subseteq S^2$, by $\rho ^{-1}=\{(y,x)\in S^2\ |\ (x,y)\in \rho \}$ and by $\rho ^2=\rho \circ \rho $. So ${\rm Eq}(S)=\{\rho \ |\ \rho \subseteq S^2,\rho \supseteq \Delta _S,\rho =\rho ^{-1},\rho ^2\subseteq \rho \}$. Now let $\rho ,\sigma \in {\rm Eq}(S)$. Then: $\rho \circ \sigma \in {\rm Eq}(S)$ iff $\rho \circ \sigma =\sigma \circ \rho $ iff $\rho \circ \sigma =(\rho \circ \sigma )^{-1}$. Indeed, we always have: $(\rho \circ \sigma )^{-1}=\sigma ^{-1}\circ \rho ^{-1}=\sigma \circ \rho $, hence the last of the equivalences above, and $\rho \circ \sigma \supseteq \Delta _S\circ \Delta _S=\Delta _S$; also, if $\rho \circ \sigma \in {\rm Eq}(S)$, then $\rho \circ \sigma =(\rho \circ \sigma )^{-1}$; conversely, if $\rho \circ \sigma =(\rho \circ \sigma )^{-1}$, then, by the above: $\rho \circ \sigma =\sigma \circ \rho $, thus $(\rho \circ \sigma )^2=\rho \circ \sigma \circ \rho \circ \sigma =\rho \circ \rho \circ \sigma \circ \sigma =\rho ^2\circ \sigma ^2\subseteq \rho \circ \sigma $, therefore $\rho \circ \sigma \in {\rm Eq}(S)$.

Now let $\phi ,\psi \in {\rm Con}(A)$, arbitrary. Then, clearly, $\phi \circ \psi $ preserves the operations of $A$, hence, by the above: $\phi \circ \psi \in {\rm Con}(A)$ iff $\phi \circ \psi \in {\rm Eq}(A)$ iff $\phi \circ \psi =\psi \circ \phi $ iff $\phi \circ \psi =(\phi \circ \psi )^{-1}$. Let us notice that $\phi \cup \psi \subseteq \phi \circ \psi \subseteq \phi \vee \psi $. Indeed, we have already seen that $\phi \cup \psi \subseteq \phi \circ \psi $; now let $(a,b)\in \phi \circ \psi $, so that $(a,x)\in \psi $ and $(x,b)\in \phi $ for some $x\in A$; since $\phi \subseteq \phi \vee \psi $ and $\psi \subseteq \phi \vee \psi $, it follows that $(a,x),(x,b)\in \phi \vee \psi $, hence $(a,b)\in \phi \vee \psi $ by the transitivity of the congruence $\phi \vee \psi $. Thus, if $\phi \circ \psi =\nabla _{A}$, then $\phi \circ \psi =\phi \vee \psi =\nabla _{A}$; also, if $\phi \cup \psi =\nabla _{A}$, then $\phi \cup \psi =\phi \circ \psi =\phi \vee \psi =\nabla _{A}$. Furthermore, $\phi \circ \psi \in {\rm Con}(A)$ iff $\phi \circ \psi =\phi \vee \psi $ iff $\phi \vee \psi \subseteq \phi \circ \psi $, where the second equivalence is obvious from the above and the converse implication in the first equivalence is trivial, and, since $\phi \cup \psi \subseteq \phi \circ \psi $ and $\phi \vee \psi =Cg_{A}(\phi \cup \psi )$, it follows that: $\phi \circ \psi \in {\rm Con}(A)$ implies $\phi \vee \psi \subseteq \phi \circ \psi $. Consequently, if $\phi \vee \psi =\nabla _{A}$, then: $\phi \circ \psi \in {\rm Con}(A)$ iff $\phi \circ \psi =\nabla _{A}$. From the above, it follows that, for any $\phi \in {\cal B}({\rm Con}(A))$, the following equivalences hold: $\phi \in {\rm FC}(A)$ iff $\phi \circ \neg \, \phi \in {\rm Con}(A)$ iff $\phi \circ \neg \, \phi =\neg \, \phi \circ \phi $ iff $\phi \circ \neg \, \phi =(\phi \circ \neg \, \phi )^{-1}$ iff $\phi \circ \neg \, \phi =\nabla _{A}$ iff $\neg \, \phi \in {\rm FC}(A)$. 

\begin{remark} Let $\phi ,\phi ^*,\psi \in {\rm Con}(A)$. Then:\begin{enumerate}
\item\label{caractfc1} $(\phi ,\phi ^*)$ is a pair of factor congruences iff $\phi \circ \phi ^*=\nabla _{A}$ and $\phi \cap \phi ^*=\Delta _{A}$ (\cite{bj});
\item\label{caractfc2} if $\phi \in {\rm FC}(A)$, then $\phi \circ \psi =\psi \circ \phi $ (\cite[Theorem $3$]{isk} and \cite[Theorem $3$]{ssm}).\end{enumerate}\label{caractfc}\end{remark}

\begin{remark}\begin{enumerate}
\item\label{fc1} Let $\phi \in {\rm FC}(A)$ and $\psi \in {\rm Con}(A)$. Then $\phi \circ \psi =\phi \vee \psi $. Indeed, by Remark \ref{caractfc}, (\ref{caractfc2}), we have $\phi \circ \psi =\psi \circ \phi $, which implies
$\phi \circ \psi =\phi \vee \psi $ by the above.
\item\label{fc2} If $A$ is an arithmetical algebra, then all $\phi ,\psi \in {\rm Con}(A)$ fulfill $\phi \circ \psi =\psi \circ \phi $, thus they all fulfill $\phi \circ \psi =\phi \vee \psi $ by the above.\end{enumerate}\label{fc}\end{remark}

\begin{remark} Let $B$ be a congruence--distributive algebra such that there exists an isomorphism $f:A\rightarrow B$. Then it is straightforward that the mapping $\theta \mapsto f(\theta )$ is a bounded lattice isomorphism between ${\rm Con}(A)$ and ${\rm Con}(B)$ and a Boolean isomorphism between ${\cal B}({\rm Con}(A))$ and ${\cal B}({\rm Con}(B))$, as well as between ${\rm FC}(A)$ and ${\rm FC}(B)$. If we replace $A$ and $B$ by two lattices $L$ and $M$, respectively, then the above also hold if $f:L\rightarrow M$ is a dual lattice isomorphism.\label{transpcong}\end{remark}

\begin{lemma}{\rm \cite{ssm}} Let $L$ be a bounded distributive lattice. Then the function $f_L:{\cal B}(L)\rightarrow {\rm FC}(L)$, defined by $f_L(a)=Cg_L(a,0)$ for all $a\in {\cal B}(L)$, is a Boolean isomorphism.\label{lemma3.2}\end{lemma}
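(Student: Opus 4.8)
The plan is to begin from an explicit description of the principal congruences involved and then read off the Boolean structure. For $a\in {\cal B}(L)$, with complement $\neg \, a\in {\cal B}(L)$, set $\theta _a=Cg_L(a,0)$. First I would check that $\theta _a=\{(x,y)\in L^2\ |\ x\vee a=y\vee a\}$: the right--hand side is an equivalence relation compatible with $\vee $ and $\wedge $ by the distributivity of $L$ (for instance $(x\wedge u)\vee a=(x\vee a)\wedge (u\vee a)$), it contains $(a,0)$ since $a\vee a=a=0\vee a$, and it is contained in every congruence $\psi $ collapsing $a$ and $0$, because $x\vee a=y\vee a$ together with $(a,0)\in \psi $ forces $(x,x\vee a),(y,y\vee a)\in \psi $ and hence $(x,y)\in \psi $; thus it equals $Cg_L(a,0)$. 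Using that $a$ is complemented, one also obtains the dual description $\theta _a=\{(x,y)\ |\ x\wedge \neg \, a=y\wedge \neg \, a\}$.

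Next I would show that $f_L$ takes values in ${\rm FC}(L)$ and that $f_L(\neg \, a)=\neg \, f_L(a)$. The candidate complement of $\theta _a$ is $\theta _{\neg \, a}=Cg_L(\neg \, a,0)$. By Remark \ref{caractfc}, (\ref{caractfc1}), it suffices to verify $\theta _a\cap \theta _{\neg \, a}=\Delta _L$ and $\theta _a\circ \theta _{\neg \, a}=\nabla _L$. The first is immediate from the two descriptions above: if $x\vee a=y\vee a$ and $x\vee \neg \, a=y\vee \neg \, a$, then $x=(x\vee a)\wedge (x\vee \neg \, a)=(y\vee a)\wedge (y\vee \neg \, a)=y$ by distributivity. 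For the second, given $x,y\in L$, I would exhibit the witness $z=(x\wedge a)\vee (y\wedge \neg \, a)$ and compute, again by distributivity, that $z\vee \neg \, a=x\vee \neg \, a$ and $z\vee a=y\vee a$, so that $(x,z)\in \theta _{\neg \, a}$ and $(z,y)\in \theta _a$; hence $\theta _a\circ \theta _{\neg \, a}=\nabla _L$. This makes $(\theta _a,\theta _{\neg \, a})$ a pair of factor congruences, so $f_L(a)=\theta _a\in {\rm FC}(L)$ and, by the uniqueness of the complement of a factor congruence recalled in Section \ref{preliminaries}, $f_L(\neg \, a)=\theta _{\neg \, a}=\neg \, \theta _a=\neg \, f_L(a)$.

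For bijectivity, injectivity is easy: if $\theta _a=\theta _b$, then $(a,0)\in \theta _b$ and $(b,0)\in \theta _a$ give $a\vee b=b$ and $b\vee a=a$, whence $a=b$. Surjectivity is the main obstacle. Given $\phi \in {\rm FC}(L)$ with complement $\phi ^*=\neg \, \phi $, I would apply $\phi \circ \phi ^*=\nabla _L$ (Remark \ref{caractfc}, (\ref{caractfc1})) to the pair $(1,0)$ to produce an element $a$ with $(a,0)\in \phi $ and $(a,1)\in \phi ^*$, and symmetrically an element $b$ with $(b,1)\in \phi $ and $(b,0)\in \phi ^*$. Then $a\vee b$ and $a\wedge b$ are congruent to $1$, respectively $0$, modulo both $\phi $ and $\phi ^*$, so, since $\phi \cap \phi ^*=\Delta _L$, one gets $a\vee b=1$ and $a\wedge b=0$; hence $a\in {\cal B}(L)$ and $b=\neg \, a$. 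From $(a,0)\in \phi $ I obtain $\theta _a\subseteq \phi $, and from $(\neg \, a,0)=(b,0)\in \phi ^*$ I obtain $\theta _{\neg \, a}\subseteq \phi ^*$. Finally, using the distributivity of ${\rm Con}(L)$ and $\theta _a\vee \theta _{\neg \, a}=\nabla _L$, $\phi =\phi \cap (\theta _a\vee \theta _{\neg \, a})=(\phi \cap \theta _a)\vee (\phi \cap \theta _{\neg \, a})=\theta _a\vee (\phi \cap \theta _{\neg \, a})$, and $\phi \cap \theta _{\neg \, a}\subseteq \phi \cap \phi ^*=\Delta _L$ forces $\phi =\theta _a=f_L(a)$.

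It remains to see that $f_L$ is a Boolean morphism. Since $f_L$ is a bijection between the Boolean algebras ${\cal B}(L)$ and ${\rm FC}(L)$, it is enough to show it is an order isomorphism. That $a\leq b$ implies $\theta _a\subseteq \theta _b$ follows from $x\vee b=x\vee a\vee b=y\vee a\vee b=y\vee b$ whenever $x\vee a=y\vee a$; conversely $\theta _a\subseteq \theta _b$ gives $(a,0)\in \theta _b$, that is $a\vee b=b$, that is $a\leq b$. Thus $f_L$ is an order isomorphism (with $f_L(0)=\Delta _L$ and $f_L(1)=\nabla _L$), hence a bounded lattice isomorphism, and, as recalled in Section \ref{preliminaries}, a bounded lattice isomorphism between Boolean algebras is a Boolean isomorphism. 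I expect the delicate point to be precisely the surjectivity step, where the complemented element must be recovered from an abstract factor congruence and then identified with the principal congruence $Cg_L(a,0)$.
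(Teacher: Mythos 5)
Your proof is correct, and it is worth noting that the paper itself contains no proof of Lemma \ref{lemma3.2}: the result is imported from \cite{ssm}, where it arises as a specialization of a theorem on the Boolean center of a universal algebra. So yours is a genuinely independent, self--contained lattice--theoretic argument, and every step checks out. Writing $\theta _a=Cg_L(a,0)$ as in your notation: the description $\theta _a=\{(x,y)\in L^2\ |\ x\vee a=y\vee a\}$ is verified correctly (in fact it holds for \emph{every} $a\in L$, complemented or not, since only distributivity is used; it agrees with the paper's embedding $h$ of ${\rm Id}(L)$ into ${\rm Con}(L)$ evaluated at the principal ideal generated by $a$). The witness $z=(x\wedge a)\vee (y\wedge \neg \, a)$ does give $\theta _a\circ \theta _{\neg \, a}=\nabla _L$, and together with $\theta _a\cap \theta _{\neg \, a}=\Delta _L$ this yields a pair of factor congruences via Remark \ref{caractfc}, (\ref{caractfc1}), exactly as you invoke it. The surjectivity step --- which you rightly identify as the delicate point --- is handled correctly: the elements $a$ and $b$ extracted from $\phi \circ \phi ^*=\phi ^*\circ \phi =\nabla _L$ at the pair $(1,0)$ satisfy $a\vee b=1$ and $a\wedge b=0$ because $\phi \cap \phi ^*=\Delta _L$, uniqueness of complements in a distributive lattice gives $b=\neg \, a$, and the squeeze $\phi =(\phi \cap \theta _a)\vee (\phi \cap \theta _{\neg \, a})=\theta _a$ legitimately uses the congruence--distributivity of lattices recalled in Section \ref{preliminaries}. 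The conclusion is also sound: a bijection between the Boolean algebras ${\cal B}(L)$ and ${\rm FC}(L)$ (the latter Boolean by \cite{cjt}, as recalled in the preliminaries) that preserves and reflects order is a bounded lattice isomorphism, hence a Boolean isomorphism by the preliminary fact that bounded lattice morphisms between Boolean algebras are Boolean. Two cosmetic observations: the dual description $\theta _a=\{(x,y)\ |\ x\wedge \neg \, a=y\wedge \neg \, a\}$ is never actually used, since your computation of $\theta _a\cap \theta _{\neg \, a}$ employs the join descriptions of both congruences; and in the surjectivity step the containment $\theta _{\neg \, a}\subseteq \phi ^*$ is likewise dispensable, as $\phi \cap \theta _{\neg \, a}\subseteq \phi \cap \phi ^*$ already suffices once $(\neg \, a,0)\in \phi ^*$ is known --- but as written the argument is complete and correct.
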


\begin{lemma} Let $n\in \N ^*$ and $A_1,\ldots ,A_n$ be congruence--distributive algebras. Then the mapping $(\theta _1,\ldots ,\theta _n)\mapsto \theta _1\times \ldots \times \theta _n$ sets a Boolean isomorphism between the Boolean algebras $\displaystyle \prod _{i=1}^n{\cal B}({\rm Con}(A_i))$ and $\displaystyle {\cal B}({\rm Con}(\prod _{i=1}^nA_i))$, as well as between the Boolean algebras $\displaystyle \prod _{i=1}^n{\rm FC}(A_i)$ and $\displaystyle {\rm FC}(\prod _{i=1}^nA_i)$.\label{lemma3.3}\end{lemma}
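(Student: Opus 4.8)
The plan is to reuse the bounded lattice isomorphism from Remark \ref{oremarca} and simply track how it behaves on complemented elements and on the composition operation $\circ $. Write $\displaystyle A=\prod _{i=1}^nA_i$ and let $\displaystyle \Phi :\prod _{i=1}^n{\rm Con}(A_i)\rightarrow {\rm Con}(A)$, $\Phi (\theta _1,\ldots ,\theta _n)=\theta _1\times \ldots \times \theta _n$, denote the mapping in the statement. By Remark \ref{oremarca}, $\Phi $ is a bounded lattice isomorphism and, crucially, it preserves $\circ $: for all $\alpha _i,\beta _i\in {\rm Con}(A_i)$, $\Phi (\alpha _1,\ldots ,\alpha _n)\circ \Phi (\beta _1,\ldots ,\beta _n)=\Phi (\alpha _1\circ \beta _1,\ldots ,\alpha _n\circ \beta _n)$. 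Since each $A_i$ is congruence--distributive, so is $A$, hence every lattice in sight is bounded distributive and all the Boolean centers involved are well defined.

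For the Boolean center part, I would first note that being complemented is a purely lattice--theoretic property, so any bounded lattice isomorphism between bounded distributive lattices carries ${\cal B}(-)$ onto ${\cal B}(-)$: indeed $\Phi ({\cal B}(\cdot ))\subseteq {\cal B}(\cdot )$ and $\Phi ^{-1}({\cal B}(\cdot ))\subseteq {\cal B}(\cdot )$ by the functoriality of ${\cal B}$ recalled in Section \ref{preliminaries}, which forces equality; and, since bounded lattice morphisms between Boolean algebras are Boolean morphisms, the restriction is a Boolean isomorphism. Thus $\Phi $ restricts to a Boolean isomorphism $\displaystyle {\cal B}(\prod _{i=1}^n{\rm Con}(A_i))\rightarrow {\cal B}({\rm Con}(A))$. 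It then remains only to identify the source: since meets, joins and complements in a finite product of bounded distributive lattices are computed componentwise, an element $(\theta _1,\ldots ,\theta _n)$ is complemented in $\displaystyle \prod _{i=1}^n{\rm Con}(A_i)$ iff each $\theta _i$ is complemented in ${\rm Con}(A_i)$, whence $\displaystyle {\cal B}(\prod _{i=1}^n{\rm Con}(A_i))=\prod _{i=1}^n{\cal B}({\rm Con}(A_i))$, with componentwise complementation. Combining the two observations yields the first asserted Boolean isomorphism, and in particular gives $\neg \, (\theta _1\times \ldots \times \theta _n)=\neg \, \theta _1\times \ldots \times \neg \, \theta _n$ for complemented $\theta _i$.

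For the factor congruence part, I would use the characterization from Section \ref{preliminaries}, namely ${\rm FC}(A)=\{\phi \in {\cal B}({\rm Con}(A))\ |\ \phi \circ \neg \, \phi =\neg \, \phi \circ \phi \}$, and likewise for each $A_i$. Take $(\phi _1,\ldots ,\phi _n)\in \prod _{i=1}^n{\cal B}({\rm Con}(A_i))$ and set $\phi =\phi _1\times \ldots \times \phi _n$; by the previous paragraph $\neg \, \phi =\neg \, \phi _1\times \ldots \times \neg \, \phi _n$. Using the $\circ $--preservation of $\Phi $, $\phi \circ \neg \, \phi =(\phi _1\circ \neg \, \phi _1)\times \ldots \times (\phi _n\circ \neg \, \phi _n)$ and $\neg \, \phi \circ \phi =(\neg \, \phi _1\circ \phi _1)\times \ldots \times (\neg \, \phi _n\circ \phi _n)$; since $\Phi $ is injective, equality of these two product congruences is equivalent to the componentwise equalities $\phi _i\circ \neg \, \phi _i=\neg \, \phi _i\circ \phi _i$ for all $i\in \overline{1,n}$. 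Hence $\phi \in {\rm FC}(A)$ iff $\phi _i\in {\rm FC}(A_i)$ for every $i$, i.e. $\Phi $ maps $\prod _{i=1}^n{\rm FC}(A_i)$ bijectively onto ${\rm FC}(A)$. Since each ${\rm FC}(A_i)$ is a Boolean subalgebra of ${\cal B}({\rm Con}(A_i))$ and ${\rm FC}(A)$ is a Boolean subalgebra of ${\cal B}({\rm Con}(A))$, restricting the Boolean isomorphism of the centers already established to these subalgebras delivers the second asserted Boolean isomorphism.

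I expect the only genuinely delicate point to be the bookkeeping of complements and compositions under $\Phi $: one must make sure the complement taken in ${\cal B}({\rm Con}(A))$ really is the product of the componentwise complements --- which is precisely what the Boolean--center identification of the second paragraph supplies --- \emph{before} invoking the $\circ $--preservation, so that the two factorizations of $\phi \circ \neg \, \phi $ and $\neg \, \phi \circ \phi $ line up componentwise. Everything else is a routine transport of structure along the isomorphism $\Phi $ of Remark \ref{oremarca}.
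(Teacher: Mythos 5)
Your proof is correct and follows essentially the same route as the paper's: the paper likewise identifies $\displaystyle \prod _{i=1}^n{\cal B}({\rm Con}(A_i))$ with $\displaystyle {\cal B}(\prod _{i=1}^n{\rm Con}(A_i))$ and obtains the first isomorphism as the image under the functor ${\cal B}$ of the isomorphism of Remark \ref{oremarca}, then settles the factor--congruence part exactly as you do, via the $\circ $--preservation of that isomorphism (citing it as straightforward from Remark \ref{oremarca}, or as \cite[Theorem $11$]{isk}). One phrasing to tighten: when you compare $\phi \circ \neg \, \phi $ with $\neg \, \phi \circ \phi $ you invoke the injectivity of $\Phi $, but these compositions need not be congruences, so strictly you should observe that the product map is injective on reflexive relations (given $(a,b)\in R_i$, pad with diagonal entries in the other coordinates) --- a one--line remark that does not affect the argument.
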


\begin{proof} Notice that $\displaystyle \prod _{i=1}^n{\cal B}({\rm Con}(A_i))={\cal B}(\prod _{i=1}^n{\rm Con}(A_i))$, thus the mapping $(\theta _1,\ldots ,\theta _n)\mapsto \theta _1\times \ldots \times \theta _n$ between $\displaystyle \prod _{i=1}^n{\cal B}({\rm Con}(A_i))$ and $\displaystyle {\cal B}({\rm Con}(\prod _{i=1}^nA_i))$ is well defined and it is a Boolean isomorphism, namely the image through the functor ${\cal B}$ of the bounded lattice morphism from Remark \ref{oremarca}. The statement on $\displaystyle \prod _{i=1}^n{\rm FC}(A_i)$ and $\displaystyle {\rm FC}(\prod _{i=1}^nA_i)$ follows from \cite[Theorem $11$]{isk}, or straightforward from Remark \ref{oremarca}.\end{proof}

Let $\Omega \subseteq {\rm Con}(A)$. We say that $\Omega $ satisfies the {\em Chinese Remainder Theorem} ({\em CRT}, for short) iff, for all $n\in \N ^*$, all $\theta _1,\ldots ,\theta _n\in \Omega $ and all $a_1,\ldots ,a_n\in A$ such that $(a_i,a_j)\in \theta _i\vee \theta _j$ for all $i,j\in \overline{1,n}$, there exists an $a\in A$ such that $(a,a_i)\in \theta _i$ for all $i\in \overline{1,n}$. We say that $A$ satisfies the {\em CRT} iff ${\rm Con}(A)$ satisfies the CRT.

\begin{proposition}{\rm \cite{cor}} Let $\Omega $ be a bounded sublattice of ${\rm Con}(A)$. Then $\Omega $ fulfills the CRT iff the bounded lattice $\Omega $ is distributive and all $\alpha ,\beta \in \Omega $ satisfy $\alpha \circ \beta =\beta \circ \alpha $.\label{proposition3.4}\end{proposition}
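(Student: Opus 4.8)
The plan is to prove both implications directly from the definition of the CRT, using only that $\Omega$ is closed under $\vee $ and $\cap $ (so $\psi \cap \theta \in \Omega $ and $\psi \vee \theta \in \Omega $ whenever $\psi ,\theta \in \Omega $) and the relations $\phi \cup \psi \subseteq \phi \circ \psi \subseteq \phi \vee \psi $ recalled in the Preliminaries.

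For the implication ``distributive and permutable $\Rightarrow $ CRT'', I would argue by induction on $n$. The case $n=2$ is immediate: if $(a_1,a_2)\in \theta _1\vee \theta _2$, then permutability gives $\theta _1\vee \theta _2=\theta _2\circ \theta _1$, so there is $a$ with $(a,a_1)\in \theta _1$ and $(a,a_2)\in \theta _2$. For the inductive step, given $\theta _1,\ldots ,\theta _n$ and $a_1,\ldots ,a_n$ satisfying the compatibility conditions, I would first apply the induction hypothesis to $\theta _1,\ldots ,\theta _{n-1}$ and $a_1,\ldots ,a_{n-1}$ to obtain $b$ with $(b,a_i)\in \theta _i$ for $i\leq n-1$. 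Setting $\psi =\theta _1\cap \ldots \cap \theta _{n-1}\in \Omega $, the finite distributive law $\psi \vee \theta _n=\bigcap _{i\leq n-1}(\theta _i\vee \theta _n)$ together with $(b,a_i)\in \theta _i\subseteq \theta _i\vee \theta _n$ and $(a_i,a_n)\in \theta _i\vee \theta _n$ yields $(b,a_n)\in \psi \vee \theta _n$; the already-proved case $n=2$ then produces $a$ with $(a,b)\in \psi $ and $(a,a_n)\in \theta _n$, and transitivity gives $(a,a_i)\in \theta _i$ for all $i\leq n$.

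For the converse, assume $\Omega $ fulfils the CRT. Permutability follows from the case $n=2$: given $(a,c)\in \alpha \circ \beta $, pick $x$ with $(a,x)\in \beta $ and $(x,c)\in \alpha $, so that $(a,c)\in \alpha \vee \beta $; applying the CRT to $\theta _1=\alpha ,\theta _2=\beta $ with $a_1=a,a_2=c$ returns $d$ with $(d,a)\in \alpha $ and $(d,c)\in \beta $, whence $(a,c)\in \beta \circ \alpha $, and symmetry gives $\alpha \circ \beta =\beta \circ \alpha $. For distributivity I only need $\alpha \cap (\beta \vee \gamma )\subseteq (\alpha \cap \beta )\vee (\alpha \cap \gamma )$, the reverse inclusion being automatic. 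The crucial instantiation is this: given $(a,b)\in \alpha \cap (\beta \vee \gamma )$, I would apply the CRT to the triple $\theta _1=\alpha ,\theta _2=\gamma ,\theta _3=\beta $ with $a_1=a,a_2=a,a_3=b$. The three compatibility conditions reduce to $(a,a)\in \alpha \vee \gamma $ (trivial), $(a,b)\in \alpha \vee \beta $ (since $(a,b)\in \alpha $) and $(a,b)\in \gamma \vee \beta $ (since $(a,b)\in \beta \vee \gamma $), all of which hold. The CRT then yields $c$ with $(c,a)\in \alpha $, $(c,a)\in \gamma $ and $(c,b)\in \beta $; from $(a,c)\in \alpha $ and $(a,b)\in \alpha $ I obtain $(c,b)\in \alpha $, so $(a,c)\in \alpha \cap \gamma $ and $(c,b)\in \alpha \cap \beta $, and transitivity gives $(a,b)\in (\alpha \cap \beta )\vee (\alpha \cap \gamma )$.

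I expect the main obstacle to be discovering the correct data to feed into the CRT in the distributivity half of the converse: permutability is essentially forced by the $n=2$ instance, but extracting distributivity requires the slightly non-obvious $n=3$ configuration above, with the target $a$ repeated so that one compatibility condition becomes trivial and the other two reduce exactly to the hypotheses $(a,b)\in \alpha $ and $(a,b)\in \beta \vee \gamma $. I would also take care, in the forward induction, to invoke the finite form of the distributive law rather than only the binary one, and to keep track of the composition convention $\phi \circ \psi =\{(a,b)\ |\ (\exists \, x)((a,x)\in \psi ,(x,b)\in \phi )\}$, so that the witnesses produced by the CRT are inserted on the correct side.
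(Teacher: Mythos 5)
Your proof is correct, but there is nothing in the paper to compare it against: the paper states this proposition with a citation to Cornish \cite{cor} and gives no proof whatsoever, so your proposal supplies a self-contained argument where the paper only quotes a known result. Checking it against the paper's conventions, everything goes through. In the forward direction, the base case $n=2$ is exactly the fact, recalled in the Preliminaries, that $\theta _1\circ \theta _2=\theta _2\circ \theta _1$ forces $\theta _1\circ \theta _2=\theta _1\vee \theta _2$; your inductive step is sound because $\Omega $ is closed under finite meets (so $\psi =\theta _1\cap \ldots \cap \theta _{n-1}\in \Omega $, hence $\psi $ and $\theta _n$ permute), and you correctly invoke the finite distributive law $\psi \vee \theta _n=\bigcap _{i\leq n-1}(\theta _i\vee \theta _n)$, which holds in $\Omega $ since its operations are those of ${\rm Con}(A)$ restricted. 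In the converse, the $n=2$ instance of the CRT gives permutability as you say, and your $n=3$ instantiation $\theta _1=\alpha ,\theta _2=\gamma ,\theta _3=\beta $, $a_1=a_2=a$, $a_3=b$ is exactly the right configuration: the compatibility conditions reduce to $(a,b)\in \alpha $ and $(a,b)\in \beta \vee \gamma $, and the witness $c$ satisfies $(a,c)\in \alpha \cap \gamma $ and $(c,b)\in \alpha \cap \beta $ (using transitivity of $\alpha $ to get $(c,b)\in \alpha $), whence $(a,b)\in (\alpha \cap \beta )\vee (\alpha \cap \gamma )$ by transitivity of the join congruence; the reverse inclusion is automatic, so $\Omega $ is distributive. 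Two trivial remarks: the case $n=1$ of the CRT (take $a=a_1$) is omitted but harmless, and note that your argument nowhere uses boundedness of $\Omega $, nor the standing assumption, in force at that point of Section 2, that $A$ is congruence--distributive (under which the distributivity clause is automatic) --- so your proof establishes the proposition in the full generality of Cornish's statement.
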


\begin{corollary}\begin{enumerate}
\item\label{corollary3.5(1)} If $A$ is congruence--distributive, then ${\rm FC}(A)$ fulfills the CRT.
\item\label{corollary3.5(2)} $A$ fulfills the CRT iff $A$ is arithmetical.\end{enumerate}\label{corollary3.5}\end{corollary}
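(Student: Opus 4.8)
The plan is to derive both statements directly from Proposition \ref{proposition3.4}, the characterization of the CRT for bounded sublattices of ${\rm Con}(A)$, by instantiating $\Omega $ suitably in each case; no additional machinery should be needed.

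For part (\ref{corollary3.5(1)}), I would take $\Omega ={\rm FC}(A)$. By the discussion preceding Remark \ref{caractfc} (citing \cite{cjt}), ${\rm FC}(A)$ is a Boolean sublattice of ${\rm Con}(A)$; in particular it is a bounded sublattice of ${\rm Con}(A)$ and, being Boolean, it is distributive. It then remains only to check that any two factor congruences permute with respect to composition, and this is exactly Remark \ref{caractfc}, (\ref{caractfc2}): for $\alpha ,\beta \in {\rm FC}(A)$ we have $\alpha \circ \beta =\beta \circ \alpha $. Thus all three hypotheses of Proposition \ref{proposition3.4} hold for $\Omega ={\rm FC}(A)$, and the proposition yields that ${\rm FC}(A)$ fulfills the CRT.

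For part (\ref{corollary3.5(2)}), I would take $\Omega ={\rm Con}(A)$, which is trivially a bounded sublattice of itself, so that, by definition, $A$ fulfills the CRT iff $\Omega $ fulfills the CRT. Proposition \ref{proposition3.4} then says this is equivalent to: ${\rm Con}(A)$ is distributive and every pair $\alpha ,\beta \in {\rm Con}(A)$ satisfies $\alpha \circ \beta =\beta \circ \alpha $. The first condition is precisely congruence--distributivity and the second precisely congruence--permutability, so their conjunction is, by definition, arithmeticity, which gives the desired equivalence.

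There is really no hard step here, since everything is a bookkeeping application of Proposition \ref{proposition3.4}; the only point that requires a cited input is the permutability of factor congruences in part (\ref{corollary3.5(1)}), supplied by Remark \ref{caractfc}, (\ref{caractfc2}), while part (\ref{corollary3.5(2)}) is just the definition of arithmeticity read off from the two conditions in Proposition \ref{proposition3.4}.
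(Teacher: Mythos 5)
Your proof is correct and is exactly the argument the paper intends: the corollary is stated without an explicit proof precisely because it follows by instantiating Proposition \ref{proposition3.4} with $\Omega ={\rm FC}(A)$ (using the fact from \cite{cjt} that ${\rm FC}(A)$ is a Boolean, hence distributive bounded, sublattice of ${\rm Con}(A)$, plus Remark \ref{caractfc}, (\ref{caractfc2}) for permutability) and with $\Omega ={\rm Con}(A)$ (where the two conditions of Proposition \ref{proposition3.4} are, by definition, congruence--distributivity and congruence--permutability, i.e.\ arithmeticity). Nothing is missing.
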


\section{FCLP: Definition, Main Properties, Characterization}
\label{fclp}

In this section we provide some more results on factor congruences, introduce the Factor Congruence Lifting Property, and obtain some of its properties, including its preservation by quotients and finite direct products, and a characterization for it through a certain property of the lattice of congruences that we have called FC--normality. We also recall the Congruence Boolean Lifting Property from \cite{cblp} and start comparing these two lifting properties; we will show more on the way they relate to each other in the following sections.

\begin{proposition} Let $n\in \N ^*$ and $A,A_1,\ldots ,A_n$ be congruence--distributive algebras. Then the following statements are equivalent:\begin{enumerate}
\item\label{proposition3.6(1)} $\displaystyle A\cong \prod _{i=1}^nA_i$;
\item\label{proposition3.6(2)} there exist $\alpha _1,\ldots ,\alpha _n\in {\rm FC}(A)$ such that $\displaystyle \bigcap _{i=1}^n\alpha _i=\Delta _{A}$, $\alpha _i\vee \alpha _j=\nabla _{A}$ for all $i,j\in \overline{1,n}$ with $i\neq j$, and $A_i\cong A/\alpha _i$ for all $i\in \overline{1,n}$.\end{enumerate}\label{proposition3.6}\end{proposition}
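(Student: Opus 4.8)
The plan is to prove the two implications separately, relying on the product congruence calculus of Remark \ref{oremarca} for (\ref{proposition3.6(1)}) $\Rightarrow$ (\ref{proposition3.6(2)}) and on the Chinese Remainder Theorem for ${\rm FC}(A)$ (Corollary \ref{corollary3.5}, (\ref{corollary3.5(1)})) for the converse. Since isomorphisms transport factor congruences, quotients and all the lattice operations faithfully (Remark \ref{transpcong}), in the first implication I may assume outright that $\displaystyle A=\prod_{i=1}^nA_i$.

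For (\ref{proposition3.6(1)}) $\Rightarrow$ (\ref{proposition3.6(2)}), working inside $\displaystyle A=\prod_{i=1}^nA_i$, I would take $\alpha _i$ to be the kernel of the $i$-th projection, that is, in the notation of Remark \ref{oremarca}, the product congruence $\nabla _{A_1}\times \cdots \times \Delta _{A_i}\times \cdots \times \nabla _{A_n}$ carrying $\Delta _{A_i}$ in slot $i$ and $\nabla _{A_j}$ in every slot $j\neq i$, and $\neg \, \alpha _i$ the complementary product congruence carrying $\nabla _{A_i}$ in slot $i$ and $\Delta _{A_j}$ elsewhere. Because the isomorphism $\displaystyle \prod _i{\rm Con}(A_i)\to {\rm Con}(\prod _iA_i)$ of Remark \ref{oremarca} preserves $\cap $, $\vee $ and $\circ $ componentwise, I can read off $\alpha _i\circ \neg \, \alpha _i=\nabla _{A}$ and $\alpha _i\cap \neg \, \alpha _i=\Delta _{A}$ slotwise (using $\nabla \circ \Delta =\Delta \circ \nabla =\nabla $ and $\nabla \cap \Delta =\Delta $ in each coordinate), so $(\alpha _i,\neg \, \alpha _i)$ is a pair of factor congruences by Remark \ref{caractfc}, (\ref{caractfc1}), whence $\alpha _i\in {\rm FC}(A)$. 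The very same slotwise computation yields $\displaystyle \bigcap _{i=1}^n\alpha _i=\Delta _{A}$ and $\alpha _i\vee \alpha _j=\nabla _{A}$ for $i\neq j$. Finally $A/\alpha _i\cong A_i$ follows from the quotient-of-a-product isomorphism of Remark \ref{oremarca}: $A/\alpha _i$ decomposes as the product of the quotients of the $A_j$ by the slot-$j$ components of $\alpha _i$, which collapses every factor with $j\neq i$ (component $\nabla _{A_j}$) to a trivial algebra and leaves $A_i/\Delta _{A_i}\cong A_i$.

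For (\ref{proposition3.6(2)}) $\Rightarrow$ (\ref{proposition3.6(1)}), I would consider the diagonal morphism $\displaystyle \varphi :A\to \prod _{i=1}^nA/\alpha _i$, $\varphi (a)=(a/\alpha _1,\ldots ,a/\alpha _n)$, whose $i$-th component is $p_{\alpha _i}$. Its kernel is $\displaystyle \bigcap _{i=1}^n{\rm Ker}(p_{\alpha _i})=\bigcap _{i=1}^n\alpha _i=\Delta _{A}$, so $\varphi $ is injective. For surjectivity I would invoke that ${\rm FC}(A)$ fulfills the CRT (Corollary \ref{corollary3.5}, (\ref{corollary3.5(1)})): given an arbitrary target $(b_1/\alpha _1,\ldots ,b_n/\alpha _n)$, the compatibility hypotheses $(b_i,b_j)\in \alpha _i\vee \alpha _j$ hold for free, since $\alpha _i\vee \alpha _j=\nabla _{A}$ for $i\neq j$ (and trivially for $i=j$), so the CRT produces $a\in A$ with $(a,b_i)\in \alpha _i$, i.e. $a/\alpha _i=b_i/\alpha _i$, for every $i$; hence $\varphi (a)$ is the prescribed tuple. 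Thus $\varphi $ is an isomorphism and, composing with the isomorphisms $A/\alpha _i\cong A_i$, we obtain $\displaystyle A\cong \prod _{i=1}^nA/\alpha _i\cong \prod _{i=1}^nA_i$.

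The only genuinely delicate point is the surjectivity of $\varphi $: this is exactly where congruence--distributivity enters, through the CRT for ${\rm FC}(A)$, and it is essential that the conditions $\alpha _i\vee \alpha _j=\nabla _{A}$ are precisely the compatibility conditions demanded by the CRT. Everything else reduces to the slotwise bookkeeping supplied by Remark \ref{oremarca}, and the case $n=1$ is immediate, since $(\Delta _{A},\nabla _{A})$ is always a pair of factor congruences.
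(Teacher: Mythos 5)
Your proposal is correct and takes essentially the same route as the paper's proof: the same congruences $\alpha _i={\rm Ker}(\pi _i)$ in the forward direction, and the same diagonal map made injective by $\bigcap _{i=1}^n\alpha _i=\Delta _A$ and surjective by the CRT for ${\rm FC}(A)$ (Corollary \ref{corollary3.5}, (\ref{corollary3.5(1)})) in the converse. The only difference is in the bookkeeping: you verify $\alpha _i\circ \neg \, \alpha _i=\nabla _A$, $\alpha _i\cap \neg \, \alpha _i=\Delta _A$ and $A/\alpha _i\cong A_i$ slotwise through the product--congruence isomorphism of Remark \ref{oremarca}, identifying $\neg \, \alpha _i$ directly as the complementary product congruence, whereas the paper argues elementwise with explicit interpolating tuples and exhibits the complement as $\beta _i=\bigcap _{j\in \overline{1,n}\setminus \{i\}}\alpha _j$ before invoking Remark \ref{caractfc}, (\ref{caractfc1}).
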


\begin{proof} (\ref{proposition3.6(1)})$\Rightarrow $(\ref{proposition3.6(2)}): If $n=1$, then just take $\alpha _1=\Delta _{A}\in {\rm FC}(A)$. Now assume that $n\geq 2$. Clearly, we may assume that $\displaystyle A=\prod _{i=1}^nA_i$. For each $i\in \overline{1,n}$, let $\pi _i:A\rightarrow A_i$ be the canonical projection: for all $(a_1,\ldots ,a_n)\in A$, $\pi _i(a_1,\ldots ,a_n)=a_i$, and
$\alpha _i={\rm Ker}(\pi _i)=\{(a,b)\in A^2\ |\ \pi _i(a)=\pi _i(b)\}=\{((a_1,\ldots ,a_n),(b_1,\ldots ,b_n))\in A^2\ |\ a_i=b_i\}\in {\rm Con}(A_i)$, since $\pi _i$ is a morphism. Clearly, $\displaystyle \bigcap _{i=1}^n\alpha _i=\Delta _{A}$.

Let $i,j\in \overline{1,n}$ such that $i\neq j$, and let $a=(a_1,\ldots ,a_n),b=(b_1,\ldots ,b_n)\in A$, arbitrary. Let $x=(a_1,\ldots ,a_{i-1},b_i,a_{i+1},\ldots ,a_n)\in A$. Since $i\neq j$, we have $\pi _i(a)=a_i=\pi _i(x)$, that is $(a,x)\in \alpha _i$. We also have $\pi _i(x)=b_i=\pi _i(b)$, that is $(x,b)\in \alpha _i$. Thus $(a,x),(x,b)\in \alpha _i\vee \alpha _j$, so $(a,b)\in \alpha _i\vee \alpha _j$ by the transitivity of $\alpha _i\vee \alpha _j$. Hence $\alpha _i\vee \alpha _j=A^2=\nabla _{A}$.

Now let $i\in \overline{1,n}$. $A/\alpha _i=\{a/\alpha _i\ |\ a\in A\}$, where, for all $a\in A$, $a/\alpha _i=\{b\in A\ |\ (a,b)\in \alpha _i\}=\{b\in A\ |\ \pi _i(a)=\pi _i(b)\}$ Let $f_i:A/\alpha _i\rightarrow A_i$, for all $a\in A$, $f_i(a/\alpha _i)=\pi _i(a)$. Then, clearly, $f_i$ is well defined and it is an isomorphism, thus $A_i\cong A/\alpha _i$.

Let $\displaystyle \beta _i=\bigcap _{j\in \overline{1,n}\setminus \{i\}}\alpha _j\in {\rm Con}(A)$. Then $\displaystyle \alpha _i\cap \beta _i=\bigcap _{j\in \overline{1,n}}\alpha _j=\Delta _{A}$. Now let $a=(a_1,\ldots ,a_n),b=(b_1,\ldots ,b_n)\in A$, arbitrary, and let $x=(a_1,\ldots ,a_{i-1},b_i,a_{i+1},\ldots ,a_n)\in A$. For all $j\in \overline{1,n}\setminus \{i\}$, $\pi _j(a)=a_j=\pi _j(x)$, which means that $\displaystyle (a,x)\in \bigcap _{j\in \overline{1,n}\setminus \{i\}}\alpha _j=\beta _i$; and $\pi _i(x)=b_i=\pi _i(b)$, that is $(x,b)\in \alpha _i$. Thus $(a,b)\in \alpha _i\circ \beta _i$, hence $\alpha _i\circ \beta _i=\nabla _{A}$. By Remark \ref{caractfc}, (\ref{caractfc1}), it follows that $(\alpha _i,\beta _i)$ is a pair of factor congruences. 

Therefore $\alpha _1,\ldots ,\alpha _n\in {\rm FC}(A)$.

\noindent (\ref{proposition3.6(2)})$\Rightarrow $(\ref{proposition3.6(1)}): Let us define $\displaystyle f:A\rightarrow \prod _{i=1}^nA/\alpha _i$ by: $f(a)=(a/\alpha _1,\ldots ,a/\alpha _n)$ for all $a\in A$. Let $a,b\in A$ such that $f(a)=f(b)$, that is $a/\alpha _i=b/\alpha _i$ for all $i\in \overline{1,n}$, which means that $\displaystyle (a,b)\in \bigcap _{i=1}^n\alpha _i=\Delta _{A}$, that is $a=b$, so $f$ is injective. For all $i\in \overline{1,n}$, let $q_i\in A/\alpha _i$, so that $q_i=a_i/\alpha _i$ for some $a_i\in A$. Then, for all $i,j\in \overline{1,n}$ with $i\neq j$, $(a_i,a_j)\in A^2=\nabla _{A}=\alpha _i\vee \alpha _j$. By Corollary \ref{corollary3.5}, (\ref{corollary3.5(1)}), ${\rm FC}(A)$ satisfies the CRT, hence there exists an $a\in A$ with the property that, for all $i\in \overline{1,n}$, $(a,a_i)\in \alpha _i$, that is $a/\alpha _i=a_i/\alpha _i$. Therefore $f(a)=(a/\alpha _1,\ldots ,a/\alpha _n)=(a_1/\alpha _1,\ldots ,a_n/\alpha _n)=(q_1,\ldots ,q_n)$; thus $f$ is surjective. Clearly, $f$ is a morphism. Therefore $f$ is an isomorphism, so $\displaystyle A\cong \prod _{i=1}^nA/\alpha _i\cong \prod _{i=1}^nA_i$.\end{proof}

Throughout the rest of this section, $A$ shall be a congruence--distributive algebra and $\theta \in {\rm Con}(A)$, arbitrary.

$v_{\theta }:{\rm Con}(A)\rightarrow [\theta )$ is a bounded lattice morphism, thus ${\cal B}(v_{\theta }):{\cal B}({\rm Con}(A))\rightarrow {\cal B}([\theta ))$ is a Boolean morphism, hence, for all $\psi \in {\cal B}({\rm Con}(A))$, we have: $\psi \vee \theta =v_{\theta }(\psi )={\cal B}(v_{\theta })(\psi )\in {\cal B}([\theta ))$, and $\neg _{\theta }(\psi \vee \theta )=\neg _{\theta }({\cal B}(v_{\theta })(\psi ))={\cal B}(v_{\theta })(\neg \, \psi )=v_{\theta }(\neg \, \psi )=\neg \, \psi \vee \theta $. $s_{\theta }^{-1}:[\theta )\rightarrow {\rm Con}(A/\theta )$ is a bounded lattice isomorphism, thus ${\cal B}(s_{\theta }^{-1}):{\cal B}([\theta ))\rightarrow {\cal B}({\rm Con}(A/\theta ))$ is a Boolean isomorphism, hence ${\cal B}({\rm Con}(A/\theta ))={\cal B}(s_{\theta }^{-1})({\cal B}([\theta )))=s_{\theta }^{-1}({\cal B}([\theta ))=\{\psi /\theta \ |\ \psi \in {\cal B}([\theta ))\}$ and, for any $\psi \in {\cal B}([\theta ))$, the complement of $\psi /\theta $ in ${\cal B}({\rm Con}(A/\theta ))$ is $\neg \, (\psi /\theta )=(\neg _{\theta }\psi )/\theta $. By the above, for any $\psi \in {\cal B}(A)$, it follows that $(\psi \vee \theta )/\theta \in {\cal B}({\rm Con}(A/\theta ))$ and $\neg \, ((\psi \vee \theta )/\theta )=(\neg _{\theta }(\psi \vee \theta ))/\theta =(\neg \, \psi \vee \theta )/\theta $. Therefore ${\rm FC}(A/\theta )=\{\gamma \in {\cal B}({\rm Con}(A/\theta ))\ |\ \gamma \circ \neg \, \gamma =\neg \, \gamma \circ \gamma \}=\{\psi /\theta \ |\ \psi \in {\cal B}([\theta )),\psi /\theta \circ (\neg _{\theta }\psi )/\theta =(\neg _{\theta }\psi )/\theta \circ \psi /\theta \}=\{\psi /\theta \ |\ \psi \in {\cal B}([\theta )),\psi \circ \neg _{\theta }\psi =\neg _{\theta }\psi \circ \psi \}=\{\psi /\theta \ |\ \psi \in {\rm FC}([\theta ))\}=p_{\theta }({\rm FC}([\theta )))={\rm FC}([\theta ))/\theta $.

\begin{proposition} $u_{\theta }({\rm FC}(A))\subseteq {\rm FC}(A/\theta )$.\label{proposition3.7}\end{proposition}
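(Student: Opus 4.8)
The plan is to reduce the claim to a statement inside the interval $[\theta )$, using the identity ${\rm FC}(A/\theta )={\rm FC}([\theta ))/\theta $ established in the paragraph preceding this proposition. Fix $\phi \in {\rm FC}(A)$; then $\neg \, \phi \in {\rm FC}(A)$ as well, and $u_{\theta }(\phi )=(\phi \vee \theta )/\theta $. Since ${\cal B}(v_{\theta })$ is a Boolean morphism and $\phi \in {\cal B}({\rm Con}(A))$, the element $\phi \vee \theta =v_{\theta }(\phi )$ lies in ${\cal B}([\theta ))$, with complement $\neg _{\theta }(\phi \vee \theta )=\neg \, \phi \vee \theta $. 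Thus it suffices to prove that $\phi \vee \theta \in {\rm FC}([\theta ))$, i.e. that $\phi \vee \theta $ permutes (with respect to $\circ $) with its complement $\neg \, \phi \vee \theta $ in ${\cal B}([\theta ))$.

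The key computation is to evaluate $(\phi \vee \theta )\circ (\neg \, \phi \vee \theta )$. First I would rewrite each join as a composition using Remark \ref{fc}, (\ref{fc1}): since $\phi ,\neg \, \phi \in {\rm FC}(A)$, we have $\phi \vee \theta =\phi \circ \theta $ and $\neg \, \phi \vee \theta =\neg \, \phi \circ \theta $, so that $(\phi \vee \theta )\circ (\neg \, \phi \vee \theta )=\phi \circ \theta \circ \neg \, \phi \circ \theta $. Now, because $\neg \, \phi $ is a factor congruence, Remark \ref{caractfc}, (\ref{caractfc2}) lets me slide $\theta $ past it, $\theta \circ \neg \, \phi =\neg \, \phi \circ \theta $, collapsing the product to $\phi \circ \neg \, \phi \circ \theta \circ \theta $. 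Finally, $\phi \circ \neg \, \phi =\nabla _{A}$ (as $\phi \in {\rm FC}(A)$) and $\theta \circ \theta =\theta $ by reflexivity and transitivity of $\theta $, whence $(\phi \vee \theta )\circ (\neg \, \phi \vee \theta )=\nabla _{A}\circ \theta =\nabla _{A}$.

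To conclude, I would invoke the preliminary equivalence: for congruences whose join is $\nabla _{A}$, the composition belongs to ${\rm Con}(A)$ iff it equals $\nabla _{A}$ iff the two congruences permute. Applying this to $\phi \vee \theta $ and $\neg \, \phi \vee \theta $, whose join is the top element $\nabla _{A}$ of $[\theta )$, the equality just obtained forces $\phi \vee \theta $ and $\neg \, \phi \vee \theta $ to permute, so $\phi \vee \theta \in {\rm FC}([\theta ))$ and therefore $u_{\theta }(\phi )=(\phi \vee \theta )/\theta \in {\rm FC}([\theta ))/\theta ={\rm FC}(A/\theta )$. The only delicate points, rather than genuine obstacles, are correctly identifying the complement of $\phi \vee \theta $ in ${\cal B}([\theta ))$ as $\neg \, \phi \vee \theta $ (which is exactly what the Boolean morphism ${\cal B}(v_{\theta })$ provides) and checking that the ``$\circ =\nabla _{A}$'' criterion for membership in ${\rm FC}$ applies verbatim inside the interval $[\theta )$, whose top is again $\nabla _{A}$; the computation itself is immediate once the permutability of factor congruences is brought to bear.
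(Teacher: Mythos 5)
Your proof is correct, but it derives the crucial equality $(\phi \vee \theta )\circ (\neg \, \phi \vee \theta )=\nabla _{A}$ by heavier means than the paper and packages the conclusion differently. The paper obtains this equality in a single step from the monotonicity of $\circ $ with respect to inclusion: since $\phi \subseteq \phi \vee \theta $ and $\neg \, \phi \subseteq \neg \, \phi \vee \theta $, one has $(\phi \vee \theta )\circ (\neg \, \phi \vee \theta )\supseteq \phi \circ \neg \, \phi =\nabla _{A}$, with no permutability of $\theta $ with anything required. You instead rewrite each join as a composition via Remark \ref{fc}, (\ref{fc1}), and slide $\theta $ past $\neg \, \phi $ via Remark \ref{caractfc}, (\ref{caractfc2}); the latter is a genuinely nontrivial theorem (cited in the paper from \cite{isk} and \cite{ssm}), so your computation is valid but spends more machinery to reach the same identity. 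The two arguments also finish differently: the paper pushes the equality down to the quotient through $(\alpha \circ \beta )/\theta =\alpha /\theta \circ \beta /\theta $ and verifies $u_{\theta }(\phi )\circ \neg \, u_{\theta }(\phi )=\nabla _{A/\theta }$ directly against the definition of a factor congruence, whereas you remain inside the interval $[\theta )$, convert the equality into permutability via the criterion ``$\rho \circ \sigma \in {\rm Con}(A)$ iff $\rho \circ \sigma =\sigma \circ \rho $'' (note that once the composition equals $\nabla _{A}\in {\rm Con}(A)$ this criterion alone suffices, so your appeal to the join being $\nabla _{A}$ is not even needed), and then invoke the identity ${\rm FC}(A/\theta )={\rm FC}([\theta ))/\theta $ from the paragraph preceding the proposition. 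Both finishes rest only on material established before the statement, so both are legitimate; the paper's is marginally shorter, while yours isolates a cleaner reusable intermediate fact, namely that the lift happens already at the level of the interval, i.e.\ $v_{\theta }({\rm FC}(A))\subseteq {\rm FC}([\theta ))$.
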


\begin{proof} Let $\psi \in {\rm FC}(A)$, which means that $\psi \in {\cal B}({\rm Con}(A))$ and $\psi \circ \neg \, \psi =\nabla _{A}$. Then, by the above, $u_{\theta }(\psi )=(\psi \vee \theta )/\theta \in {\cal B}({\rm Con}(A/\theta ))$ and $\neg _{\theta }u_{\theta }(\psi )=\neg \, ((\psi \vee \theta )/\theta )=(\neg \, \psi \vee \theta )/\theta $; also, $(\psi \vee \theta )\circ (\neg \, \psi \vee \theta )\supseteq \psi \circ \neg \, \psi =\nabla _{A}$, thus $u_{\theta }(\psi )\circ \neg \, u_{\theta }(\psi )=(\psi \vee \theta )/\theta \circ (\neg \, \psi \vee \theta )/\theta =((\psi \vee \theta )\circ (\neg \, \psi \vee \theta ))/\theta =\nabla _{A}/\theta =\nabla _{A/\theta }$. Therefore $u_{\theta }(\psi )\in {\rm FC}(A/\theta )$.\end{proof}

We denote by ${\rm FC}(\theta )=u_{\theta }\mid _{{\rm FC}(A)}:{\rm FC}(A)\rightarrow {\rm FC}(A/\theta )$.

\begin{proposition}\begin{enumerate}
\item\label{proposition3.8(1)} ${\rm FC}(\theta )$ is well defined and it is a Boolean morphism;
\item\label{proposition3.8(2)} the following diagrams are commutative:\vspace*{-23pt}

\begin{center}\begin{picture}(200,70)(0,0)
\put(0,45){${\rm FC}(A)$}
\put(70,45){${\cal B}({\rm Con}(A))$} 
\put(155,45){${\rm Con}(A)$}
\put(0,5){${\rm FC}(A/\theta )$}
\put(70,5){${\cal B}({\rm Con}(A/\theta ))$} 
\put(155,5){${\rm Con}(A/\theta )$}
\put(10,42){\vector(0,-1){27}}
\put(80,42){\vector(0,-1){27}}
\put(165,42){\vector(0,-1){27}}
\put(13,26){${\rm FC}(\theta )$}
\put(83,26){${\cal B}(u_{\theta })$}
\put(168,26){$u_{\theta }$}
\put(30,48){\vector(1,0){39}}
\put(119,48){\vector(1,0){34}}
\put(40,8){\vector(1,0){28}}
\put(129,8){\vector(1,0){24}}
\end{picture}\end{center}\vspace*{-13pt}

where the horizontal arrows represent bounded lattice embeddings (thus the ones to the left are Boolean embeddings).\end{enumerate}\label{proposition3.8}\end{proposition}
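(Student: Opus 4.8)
The plan is to recognize the three vertical maps $u_{\theta }$, ${\cal B}(u_{\theta })$ and ${\rm FC}(\theta )$ as nested restrictions of the single bounded lattice morphism $u_{\theta }$, so that both assertions reduce to the already-established facts that ${\rm FC}(A)$ and ${\rm FC}(A/\theta )$ are Boolean subalgebras of ${\cal B}({\rm Con}(A))$ and ${\cal B}({\rm Con}(A/\theta ))$, respectively, together with Proposition \ref{proposition3.7} and the computation preceding it.

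For part (\ref{proposition3.8(1)}), the well--definedness of ${\rm FC}(\theta )$ is exactly Proposition \ref{proposition3.7}: since $u_{\theta }({\rm FC}(A))\subseteq {\rm FC}(A/\theta )$, the assignment $\psi \mapsto u_{\theta }(\psi )$ does land in ${\rm FC}(A/\theta )$. To see that it is a Boolean morphism, I would first observe that, because ${\rm FC}(A)\subseteq {\cal B}({\rm Con}(A))$, we have ${\rm FC}(\theta )={\cal B}(u_{\theta })\mid _{{\rm FC}(A)}$, with ${\cal B}(u_{\theta })$ a Boolean morphism by functoriality of ${\cal B}$. Since ${\rm FC}(A)$ is a Boolean subalgebra of ${\cal B}({\rm Con}(A))$, its Boolean operations (in particular its complementation, which coincides with $\neg \, $ of ${\cal B}({\rm Con}(A))$) are the restrictions of those of ${\cal B}({\rm Con}(A))$; hence restricting the Boolean morphism ${\cal B}(u_{\theta })$ to ${\rm FC}(A)$ preserves $\vee $, $\wedge $, $\neg \, $, $\Delta _{A}$ and $\nabla _{A}$. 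Finally, since ${\rm FC}(A/\theta )$ is likewise a Boolean subalgebra of ${\cal B}({\rm Con}(A/\theta ))$ and the image of ${\rm FC}(\theta )$ is contained in it by Proposition \ref{proposition3.7}, corestricting along this Boolean embedding changes none of the operations, so ${\rm FC}(\theta ):{\rm FC}(A)\rightarrow {\rm FC}(A/\theta )$ is a Boolean morphism.

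For part (\ref{proposition3.8(2)}), I would first name the horizontal arrows as the canonical inclusions and verify the embedding claims. The upper and lower arrows of the left square are the inclusions ${\rm FC}(A)\hookrightarrow {\cal B}({\rm Con}(A))$ and ${\rm FC}(A/\theta )\hookrightarrow {\cal B}({\rm Con}(A/\theta ))$, which are Boolean embeddings precisely because ${\rm FC}$ is a Boolean subalgebra of ${\cal B}({\rm Con})$ in each case. The arrows of the right square are the inclusions ${\cal B}({\rm Con}(A))\hookrightarrow {\rm Con}(A)$ and ${\cal B}({\rm Con}(A/\theta ))\hookrightarrow {\rm Con}(A/\theta )$, which are bounded lattice embeddings because the Boolean center is a bounded sublattice of the ambient congruence lattice; these are not Boolean embeddings in general, since ${\rm Con}$ need not be Boolean, which matches the parenthetical restriction of the Boolean--embedding claim to the left square.

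The commutativity of both squares is then immediate from the fact that all three vertical maps are restrictions of $u_{\theta }$: on the right square, for $\psi \in {\cal B}({\rm Con}(A))$, both composites send $\psi $ to $u_{\theta }(\psi )$ because ${\cal B}(u_{\theta })=u_{\theta }\mid _{{\cal B}({\rm Con}(A))}$ and the horizontal maps are inclusions; on the left square, for $\psi \in {\rm FC}(A)$, both composites send $\psi $ to $u_{\theta }(\psi )$ because ${\rm FC}(\theta )=u_{\theta }\mid _{{\rm FC}(A)}={\cal B}(u_{\theta })\mid _{{\rm FC}(A)}$. The only point demanding genuine care, and the one I would make most explicit, is the agreement of the several complementations: that the complement of $\psi $ computed in ${\rm FC}(A)$, in ${\cal B}({\rm Con}(A))$, and, after applying $u_{\theta }$, in ${\rm FC}(A/\theta )$ and in ${\cal B}({\rm Con}(A/\theta ))$, all coincide. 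This is guaranteed by the subalgebra relations, but it is the place where an incautious argument could slip, so I would invoke the identity $\neg \, ((\psi \vee \theta )/\theta )=(\neg \, \psi \vee \theta )/\theta $ established just before Proposition \ref{proposition3.7} to make the identification transparent.
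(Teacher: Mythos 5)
Your proof is correct and follows essentially the same route as the paper: well--definedness from Proposition \ref{proposition3.7}, the restriction identities ${\rm FC}(\theta )=u_{\theta }\mid _{{\rm FC}(A)}={\cal B}(u_{\theta })\mid _{{\rm FC}(A)}$ for the commutativity of both squares, and the Boolean subalgebra structure of ${\rm FC}(A)$ and ${\rm FC}(A/\theta )$ for the morphism claim. The only cosmetic difference is that where you verify preservation of complementation by hand (invoking $\neg \, ((\psi \vee \theta )/\theta )=(\neg \, \psi \vee \theta )/\theta $), the paper simply appeals to the standard fact, recalled in its preliminaries, that a bounded lattice morphism between Boolean algebras is automatically a Boolean morphism.
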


\begin{proof} (\ref{proposition3.8(1)}) By Proposition \ref{proposition3.7}, ${\rm FC}(\theta )$ is well defined. Since $u_{\theta }:{\rm Con}(A)\rightarrow {\rm Con}(A/\theta )$ is a bounded lattice morphism and ${\rm FC}(\theta )=u_{\theta }\mid _{{\rm FC}(A)}:{\rm FC}(A)\rightarrow {\rm FC}(A/\theta )$, with ${\rm FC}(A)$ a Boolean sublattice of ${\rm Con}(A)$ and ${\rm FC}(A/\theta )$ a Boolean sublattice of ${\rm Con}(A/\theta )$, it follows that ${\rm FC}(\theta )$ is a bounded lattice morphism between two Boolean algebras, hence it is a Boolean morphism.

\noindent  (\ref{proposition3.8(2)}) By the fact that ${\cal B}(u_{\theta })=u_{\theta }\mid _{{\cal B}({\rm Con}(A))}$ and ${\rm FC}(\theta )=u_{\theta }\mid _{{\rm FC}(A)}={\cal B}(u_{\theta })\mid _{{\rm FC}(A)}$.\end{proof}

\begin{definition} We say that $\theta $ has the {\em Factor Congruence Lifting Property} (abbreviated {\em FCLP}) iff the Boolean morphism ${\rm FC}(\theta ):{\rm FC}(A)\rightarrow {\rm FC}(A/\theta )$ is surjective.

We say that $A$ has the {\em Factor Congruence Lifting Property} ({\em FCLP}) iff each congruence of $A$ has the FCLP.\label{definition4.1}\end{definition}

\begin{definition}{\rm \cite{cblp}} We say that $\theta $ has the {\em Congruence Boolean Lifting Property} (abbreviated {\em CBLP}) iff the Boolean morphism ${\cal B}(u_{\theta }):{\cal B}({\rm Con}(A))\rightarrow {\cal B}({\rm Con}(A/\theta ))$ is surjective.

We say that $A$ has the {\em Congruence Boolean Lifting Property} ({\em CBLP}) iff each congruence of $A$ has the CBLP.\label{defcblp}\end{definition}

\begin{remark} The properties on CBLP that we cite in the rest of this article do hold without enforcing the hypothesis (H) from \cite{cblp}, namely the requirement that $\nabla _{A}$ is a finitely generated congruence of $A$, or, equivalently, a compact element of ${\rm Con}(A)$.\end{remark}

\begin{remark}{\rm \cite{cblp}} $\theta $ has CBLP iff the Boolean morphism ${\cal B}(v_{\theta }):{\cal B}({\rm Con}(A))\rightarrow {\cal B}([\theta ))$ is surjective. This is immediate, since $s_{\theta }$ is a bounded lattice isomorphism and thus ${\cal B}(s_{\theta })$ is a Boolean isomorphism, and we have the following commutative diagram in the category of Boolean algebras:\vspace*{-10pt}

\begin{center}\begin{picture}(120,53)(0,0)
\put(-13,30){${\cal B}({\rm Con}(A))$}
\put(35,33){\vector (1,0){40}}
\put(15,27){\vector (3,-2){33}}
\put(110,26){\vector (-3,-2){33}}
\put(42,38){${\cal B}(u_{\theta })$}
\put(5,11){${\cal B}(v_{\theta })$}
\put(96,10){${\cal B}(s_{\theta })$}
\put(76,30){${\cal B}({\rm Con}(A/\theta ))$}
\put(50,0){${\cal B}([\theta ))$}\end{picture}\end{center}\vspace*{-8pt}\label{cblpcuv}\end{remark}

\begin{lemma} $\theta $ has FCLP iff, for any $\psi \in [\theta )$ such that $\psi /\theta \in {\rm FC}(A/\theta )$, there exists a $\phi \in {\rm FC}(A)$ such that $\phi \vee \theta =\psi $.\vspace*{-20pt}

\begin{center}\begin{picture}(120,53)(0,0)
\put(0,30){${\rm FC}(A)$}
\put(29,33){\vector (1,0){45}}
\put(15,27){\vector (3,-2){33}}
\put(94,26){\vector (-3,-2){33}}
\put(39,36){${\rm FC}(\theta )$}
\put(-4,11){$v_{\theta }\mid _{{\rm FC}(A)}$}
\put(78,10){$s_{\theta }\mid _{{\rm FC}(A/\theta )}$}
\put(76,30){${\rm FC}(A/\theta )$}
\put(50,0){$[\theta )$}
\end{picture}\end{center}\vspace*{-8pt}\label{lemma4.0}\end{lemma}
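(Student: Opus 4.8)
The plan is to read off both implications directly from the commutativity of the triangle in the statement, exploiting that the map $\alpha\mapsto\alpha/\theta$ (that is, $s_\theta^{-1}$) is a bijection of $[\theta)$ onto ${\rm Con}(A/\theta)$. The first thing I would record is that, for every $\phi\in{\rm FC}(A)$, the three maps are tied together by $v_\theta(\phi)=s_\theta(u_\theta(\phi))$; this is nothing but the identity $v_\theta=s_\theta\circ u_\theta$ from the Preliminaries restricted to ${\rm FC}(A)$, combined with ${\rm FC}(\theta)=u_\theta\mid_{{\rm FC}(A)}$. Since $v_\theta(\phi)=\phi\vee\theta$ and $s_\theta$ is injective, this produces the single equivalence on which the whole proof rests: for $\phi\in{\rm FC}(A)$ and $\psi\in[\theta)$,
\[
\phi\vee\theta=\psi\quad\Longleftrightarrow\quad{\rm FC}(\theta)(\phi)=\psi/\theta .
\]
Here I would use two facts already available in the excerpt: that $\alpha/\theta=\beta/\theta$ iff $\alpha=\beta$ for $\alpha,\beta\in[\theta)$, and that every element of ${\rm Con}(A/\theta)$, hence every member of ${\rm FC}(A/\theta)$, is of the form $\psi/\theta$ for the unique $\psi=s_\theta(\psi/\theta)\in[\theta)$.

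For the forward implication I would assume ${\rm FC}(\theta)$ surjective and take any $\psi\in[\theta)$ with $\psi/\theta\in{\rm FC}(A/\theta)$. Surjectivity yields some $\phi\in{\rm FC}(A)$ with ${\rm FC}(\theta)(\phi)=\psi/\theta$, and the displayed equivalence immediately gives the required $\phi\vee\theta=\psi$.

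For the converse I would assume the stated lifting condition and verify that ${\rm FC}(\theta)$ is onto. Given $\gamma\in{\rm FC}(A/\theta)$, I would write $\gamma=\psi/\theta$ with $\psi=s_\theta(\gamma)\in[\theta)$; then $\psi/\theta=\gamma\in{\rm FC}(A/\theta)$, so the hypothesis supplies $\phi\in{\rm FC}(A)$ with $\phi\vee\theta=\psi$, and the equivalence gives ${\rm FC}(\theta)(\phi)=\psi/\theta=\gamma$. Thus $\gamma$ lies in the image of ${\rm FC}(\theta)$, and surjectivity follows.

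The argument is largely bookkeeping, and I anticipate no serious obstacle; the one point that genuinely requires care, and the reason the diagram is drawn, is the translation between the two ways of recording a lift: in the $[\theta)$--picture a lift of $\psi$ is witnessed by $\phi\vee\theta=\psi$, whereas surjectivity of ${\rm FC}(\theta)$ naturally speaks of ${\rm FC}(\theta)(\phi)=\gamma$ in the $A/\theta$--picture. Everything hinges on the bijection $s_\theta$ matching $\psi\in[\theta)$ with $\psi/\theta\in{\rm Con}(A/\theta)$ and restricting correctly to ${\rm FC}(A/\theta)={\rm FC}([\theta))/\theta$, so the main thing to keep straight is that this identification carries factor congruences to factor congruences.
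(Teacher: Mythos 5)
Your proof is correct and follows essentially the same route as the paper: both arguments rest on the commutativity $v_{\theta }=s_{\theta }\circ {\rm FC}(\theta )$ on ${\rm FC}(A)$ and the injectivity of the isomorphism $s_{\theta }^{-1}$, which turns ${\rm FC}(\theta )(\phi )=\psi /\theta $ into $\phi \vee \theta =\psi $; the paper merely compresses your two explicit implications into a single chain of equivalences. Your closing observation about the identification ${\rm FC}(A/\theta )={\rm FC}([\theta ))/\theta $ matches what the paper establishes just before Proposition \ref{proposition3.7}, so nothing is missing.
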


\begin{proof} Let us apply the commutativity of the diagram above. By Definition \ref{definition4.1}, $\theta $ has FCLP iff, for any $\psi \in [\theta )$ such that $\psi /\theta \in {\rm FC}(A/\theta )$, there exists a $\phi \in {\rm FC}(A)$ such that $\psi /\theta ={\rm FC}(\theta )(\phi )=(\phi \vee \theta )/\theta $, that is $p_{\theta }(\psi )=p_{\theta }(\phi \vee \theta )$, that is $s_{\theta }^{-1}(\psi )=s_{\theta }^{-1}(\phi \vee \theta )$, which means that $\psi =\phi \vee \theta $ by the injectivity of the bounded lattice isomorphism $s_{\theta }^{-1}$.\end{proof}

\begin{proposition} Let $A$ be a congruence--distributive algebra. Then: $A$ has FCLP iff, for all $\phi \in {\rm Con}(A)$, $A/\phi $ has FCLP. The same goes for CBLP instead of FCLP.\label{proposition4.4}\end{proposition}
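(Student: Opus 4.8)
The plan is to prove the two implications separately, reducing everything to the surjectivity condition of Definition \ref{definition4.1}, and then to observe that the CBLP statement follows verbatim, since the only difference is that ${\rm FC}(-)$ is replaced by ${\cal B}({\rm Con}(-))$ and ${\rm FC}(\theta)$ by ${\cal B}(u_\theta)$ throughout. For the implication from the quotients to $A$ itself, I would simply take $\phi=\Delta_{A}$: since $A\cong A/\Delta_{A}$ and FCLP is invariant under isomorphism (which is immediate from the transfer of ${\rm Con}$, ${\cal B}({\rm Con})$ and ${\rm FC}$ recorded in Remark \ref{transpcong}), the hypothesis applied to $\phi=\Delta_{A}$ yields that $A$ has FCLP. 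This direction carries no real content.

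The substance lies in the converse. Assuming $A$ has FCLP, I must show that for each fixed $\phi\in{\rm Con}(A)$ every congruence of $A/\phi$ has FCLP. Fix $\phi$ and an arbitrary $\bar{\theta}\in{\rm Con}(A/\phi)$. Using the bounded lattice isomorphism $s_{\phi}\colon{\rm Con}(A/\phi)\rightarrow[\phi)$, write $\bar{\theta}=\theta/\phi$ for the unique $\theta=s_{\phi}(\bar{\theta})\in[\phi)$. By the third isomorphism theorem (see \cite{bur}, \cite{gralgu}) there is an isomorphism $g\colon(A/\phi)/\bar{\theta}\rightarrow A/\theta$ sending $(a/\phi)/\bar{\theta}\mapsto a/\theta$, which is well defined because $\phi\subseteq\theta$; by Remark \ref{transpcong} it induces a Boolean isomorphism $G\colon{\rm FC}((A/\phi)/\bar{\theta})\rightarrow{\rm FC}(A/\theta)$ and, more generally, a lattice isomorphism $G^{\mathrm{lat}}\colon{\rm Con}((A/\phi)/\bar{\theta})\rightarrow{\rm Con}(A/\theta)$.

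The key step is a \emph{transitivity of quotients} identity at the level of the whole congruence lattices, namely $G^{\mathrm{lat}}\circ u_{\bar{\theta}}\circ u_{\phi}=u_{\theta}\colon{\rm Con}(A)\rightarrow{\rm Con}(A/\theta)$. I would verify this by a direct computation: for $\alpha\in{\rm Con}(A)$ one has $u_{\phi}(\alpha)=(\alpha\vee\phi)/\phi$, and since $\alpha\vee\phi,\theta\in[\phi)$ and $s_{\phi}^{-1}$ is a lattice isomorphism, the join $u_{\phi}(\alpha)\vee(\theta/\phi)$ equals $((\alpha\vee\phi)\vee\theta)/\phi=(\alpha\vee\theta)/\phi$ (using $\phi\subseteq\theta$); hence $u_{\bar{\theta}}(u_{\phi}(\alpha))=((\alpha\vee\theta)/\phi)/\bar{\theta}$, which $g$ carries to $(\alpha\vee\theta)/\theta=u_{\theta}(\alpha)$. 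This last transport is checked through the class-membership equivalences recorded in Section \ref{preliminaries} (that $(a/\psi,b/\psi)\in\gamma/\psi$ iff $(a,b)\in\gamma$), since both congruences relate $a$ and $b$ exactly when $(a,b)\in\alpha\vee\theta$. Restricting this identity to factor congruences — using that $u_{\phi},u_{\bar{\theta}},u_{\theta}$ restrict to ${\rm FC}(\phi),{\rm FC}(\bar{\theta}),{\rm FC}(\theta)$ by Proposition \ref{proposition3.7}, and that $G^{\mathrm{lat}}$ restricts to $G$ — yields the Boolean identity $G\circ{\rm FC}(\bar{\theta})\circ{\rm FC}(\phi)={\rm FC}(\theta)$.

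From here the conclusion is immediate. Since $A$ has FCLP and $\theta\in{\rm Con}(A)$, the map ${\rm FC}(\theta)$ is surjective; as $G$ is a bijection, the composite ${\rm FC}(\bar{\theta})\circ{\rm FC}(\phi)$ is surjective, and therefore so is its last factor ${\rm FC}(\bar{\theta})$. Thus $\bar{\theta}$ has FCLP; as $\bar{\theta}$ was arbitrary, $A/\phi$ has FCLP. The CBLP statement then follows line by line, replacing the restricted maps by their ${\cal B}(-)$ analogues and invoking the commutative triangle of Remark \ref{cblpcuv}, since the same lattice identity $G^{\mathrm{lat}}\circ u_{\bar{\theta}}\circ u_{\phi}=u_{\theta}$ restricts under the functor ${\cal B}$. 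I expect the only genuinely delicate point to be the verification of this transitivity identity, specifically the join computation in ${\rm Con}(A/\phi)$ and the transport of congruences along $g$; everything else is a short diagram chase together with the elementary fact that a surjective composite forces its last factor to be surjective.
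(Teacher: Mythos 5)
Your proposal is correct and takes essentially the same route as the paper: the paper also disposes of the converse via $\phi=\Delta_{A}$, and for the direct implication it applies the Second Isomorphism Theorem to get precisely your transitivity identity (split there into two commuting triangles, $f\circ u_{\phi}=u_{\psi}$ and $g\circ u_{\psi/\phi}=f$ with $\psi$ playing the role of your $\theta$), restricts everything to factor congruences, and runs the same surjectivity chase, concluding from ${\rm FC}(\psi)$ surjective and $g'$ bijective that ${\rm FC}(\psi/\phi)$ is surjective. The only cosmetic differences are that you obtain bijectivity of $G$ at once from Remark \ref{transpcong} where the paper argues injectivity and surjectivity of $g'$ separately from the diagram, and that you reprove the CBLP clause by the same argument where the paper simply cites \cite{cblp}.
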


\begin{proof} The statement on CBLP is known from \cite{cblp}.

For the converse of the statement on FCLP, just take $\phi =\Delta _{A}$, so that $A/\phi =A/\Delta _{A}\cong A$.

Now assume that $A$ has FCLP, and let $\phi \in {\rm Con}(A)$ and $\psi \in [\phi )$. Let $f:{\rm Con}(A/\phi )\rightarrow {\rm Con}(A/\psi )$, for all $\alpha \in [\phi )$, $f(\alpha /\phi )=(\alpha \vee \psi )/\psi $. It is immediate that $f$ is well defined and it is a bounded lattice morphism. Let $g:{\rm Con}((A/\phi )/_{\textstyle(\psi /\phi )})\rightarrow {\rm Con}(A/\psi )$, for all $\alpha \in [\psi )\subseteq [\phi )$, $g((\alpha /\phi )/_{\textstyle(\psi /\phi )})=\alpha /\psi $. According to the Second Isomorphism Theorem (\cite{bur}), $g$ is well defined and it is a bounded lattice isomorphism. Then the following diagram in the category of bounded distributive lattices is commutative:\vspace*{-10pt}

\begin{center}\begin{picture}(200,53)(0,0)
\put(0,30){${\rm Con}(A)$}
\put(35,33){\vector(1,0){40}}
\put(15,27){\vector(3,-1){60}}
\put(94,27){\vector(0,-1){20}}
\put(96,16){$f$}
\put(50,40){$u_{\phi }$}
\put(34,10){$u_{\psi }$}
\put(76,30){${\rm Con}(A/\phi )$}
\put(162,30){${\rm Con}((A/\phi )/_{\textstyle(\psi /\phi )})$}
\put(120,33){\vector(1,0){40}}
\put(190,27){\vector(-3,-1){68}}
\put(156,9){$g$}
\put(124,40){$u_{(\psi /\phi )}$}
\put(76,0){${\rm Con}(A/\psi )$}
\end{picture}\end{center}

Indeed, for all $\alpha \in {\rm Con}(A)$, $f(u_{\phi }(\alpha ))=f((\alpha \vee \phi )/\phi )=(\alpha \vee \phi \vee \psi )/\psi =u_{\psi }(\alpha )$, and, for all $\beta \in [\phi )$, $g(u_{(\psi /\phi )}(\beta /\phi ))=g((\beta /\phi )\vee (\psi /\phi )/_{\textstyle(\psi /\phi )})=g((\beta \vee \psi )/\phi )/_{\textstyle(\psi /\phi )})=(\beta \vee \psi )/\psi =f(\beta /\phi )$. By considerring the restrictions of the morphisms in the previous diagram to the Boolean algebras of factor congruences, we obtain the following commutative diagram in the category of Boolean algebras:\vspace*{-9pt}

\begin{center}\begin{picture}(200,53)(0,0)
\put(2,30){${\rm FC}(A)$}
\put(33,33){\vector(1,0){42}}
\put(15,27){\vector(3,-1){60}}
\put(96,27){\vector(0,-1){20}}
\put(98,16){$f^{\prime }$}
\put(40,38){${\rm FC}(\phi )$}
\put(24,8){${\rm FC}(\psi )$}
\put(78,30){${\rm FC}(A/\phi )$}
\put(162,30){${\rm FC}((A/\phi )/_{\textstyle(\psi /\phi )})$}
\put(118,33){\vector(1,0){42}}
\put(190,27){\vector(-3,-1){70}}
\put(156,9){$g^{\prime }$}
\put(119,38){${\rm FC}(\psi /\phi )$}
\put(78,0){${\rm FC}(A/\psi )$}
\end{picture}\end{center}

\noindent where $f^{\prime }=f\mid _{{\rm FC}(A/\phi )}$ and $g^{\prime }=g\mid _{{\rm FC}((A/\phi )/_{\scriptstyle (\psi /\phi )})}$ both have the image within ${\rm FC}(A/\psi )$ by the very commutativity of the first diagram and the fact that the image of ${\rm FC}(\psi )$ is included in ${\rm FC}(A/\psi )$. Therefore $f^{\prime }\circ {\rm FC}(\phi )={\rm FC}(\psi )$ and $f^{\prime }=g^{\prime }\circ {\rm FC}(\psi /\phi )$. Since $A$ has FCLP, $\psi $ has FCLP, that is ${\rm FC}(\psi )$ is surjective, hence $f^{\prime }$ is surjective, thus $g^{\prime }$ is surjective. But $g$ is injective, so $g^{\prime }$ is injective. Therefore $g^{\prime }$ is bijective, so there exists $(g^{\prime })^{-1}:{\rm FC}(A/\psi )\rightarrow {\rm FC}((A/\phi )/_{\textstyle(\psi /\phi )})$, hence ${\rm FC}(\psi /\phi )=(g^{\prime })^{-1}\circ f^{\prime }$, with $(g^{\prime })^{-1}$ bijective and $f^{\prime }$ surjective, thus ${\rm FC}(\psi /\phi )$ is surjective, which means that $\psi /\phi $ has FCLP. Therefore $A/\phi $ has FCLP.\end{proof}

\begin{proposition} Let $n\in \N ^*$ and $A_1,\ldots ,A_n$ be congruence--distributive algebras and $\displaystyle A=\prod _{i=1}^nA_i$. Then: $A$ has FCLP iff each of the algebras $A_1,\ldots ,A_n$ has FCLP. The same goes for CBLP instead of FCLP.\label{proposition4.5}\end{proposition}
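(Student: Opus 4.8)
The plan is to reduce the product statement to the componentwise FCLP of the factors by systematically transporting every structure through the canonical isomorphisms recorded in the preliminaries. Writing $A=\prod_{i=1}^nA_i$, I would first fix an arbitrary $\theta\in{\rm Con}(A)$ and use Remark \ref{oremarca} to write $\theta=\theta_1\times\ldots\times\theta_n$ with $\theta_i\in{\rm Con}(A_i)$, together with the induced isomorphism $j:\prod_{i=1}^nA_i/\theta_i\rightarrow A/\theta$. The three facts I want to extract are: (a) the assignment $(\alpha_1,\ldots,\alpha_n)\mapsto\alpha_1\times\ldots\times\alpha_n$ is a bounded lattice isomorphism ${\rm Con}(A)\cong\prod_i{\rm Con}(A_i)$ that restricts to the Boolean isomorphism ${\rm FC}(A)\cong\prod_i{\rm FC}(A_i)$ of Lemma \ref{lemma3.3}, so joins with $\theta$ are computed coordinatewise, $(\prod_i\phi_i)\vee\theta=\prod_i(\phi_i\vee\theta_i)$; (b) via $j$ and Remark \ref{transpcong}, the Boolean isomorphism ${\rm FC}(A/\theta)\cong{\rm FC}(\prod_iA_i/\theta_i)\cong\prod_i{\rm FC}(A_i/\theta_i)$ carries each class $(\prod_i\psi_i)/\theta$ to $\prod_i(\psi_i/\theta_i)$; and (c) the order isomorphism $[\theta)\cong\prod_i[\theta_i)$. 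Verifying (b) is the only place needing care: one checks, using the equivalence ``$(a/\theta,b/\theta)\in\alpha/\theta$ iff $(a,b)\in\alpha$'' established in the preliminaries for $\alpha\in[\theta)$, applied both on $A$ and coordinatewise on the $A_i$, that $j^{-1}((\alpha_1\times\ldots\times\alpha_n)/\theta)=\prod_i(\alpha_i/\theta_i)$ whenever each $\alpha_i\in[\theta_i)$.

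For the forward implication ($A$ has FCLP $\Rightarrow$ each $A_i$ has FCLP) I would avoid the coordinatewise analysis entirely. By Proposition \ref{proposition3.6} and its proof, the kernel $\alpha_i$ of the $i$-th projection $A\rightarrow A_i$ lies in ${\rm FC}(A)$ and satisfies $A_i\cong A/\alpha_i$. Since $A$ has FCLP, Proposition \ref{proposition4.4} gives that $A/\alpha_i$ has FCLP, and FCLP is preserved under isomorphism by Remark \ref{transpcong}; hence $A_i$ has FCLP.

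For the reverse implication (each $A_i$ has FCLP $\Rightarrow$ $A$ has FCLP) I would show that an arbitrary $\theta=\prod_i\theta_i$ has FCLP using the characterization in Lemma \ref{lemma4.0}. Let $\psi\in[\theta)$ with $\psi/\theta\in{\rm FC}(A/\theta)$, and write $\psi=\prod_i\psi_i$ with $\psi_i\in[\theta_i)$ by (c). By (b), the hypothesis $\psi/\theta\in{\rm FC}(A/\theta)$ forces each $\psi_i/\theta_i\in{\rm FC}(A_i/\theta_i)$. Since $A_i$ has FCLP, $\theta_i$ has FCLP, so Lemma \ref{lemma4.0} supplies $\phi_i\in{\rm FC}(A_i)$ with $\phi_i\vee\theta_i=\psi_i$. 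Setting $\phi=\prod_i\phi_i\in{\rm FC}(A)$ (Lemma \ref{lemma3.3}) and using (a), we get $\phi\vee\theta=\prod_i(\phi_i\vee\theta_i)=\prod_i\psi_i=\psi$, so by Lemma \ref{lemma4.0} again $\theta$ has FCLP; as $\theta$ was arbitrary, $A$ has FCLP.

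The CBLP version runs in exactly the same way with ${\cal B}({\rm Con}(-))$ in place of ${\rm FC}(-)$ throughout, using the ${\cal B}({\rm Con})$--half of Lemma \ref{lemma3.3} and the CBLP analogue of Lemma \ref{lemma4.0} (equivalently, it is the statement already recorded in \cite{cblp}). The main obstacle is purely bookkeeping: keeping the identifications (a)--(c) coherent across the maps involved, and in particular confirming in (b) that ``being a factor congruence'' is detected coordinatewise, which is exactly what Lemma \ref{lemma3.3} guarantees once the class $(\prod_i\psi_i)/\theta$ has been matched with $\prod_i(\psi_i/\theta_i)$.
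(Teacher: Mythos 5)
Your proposal is correct and is essentially the paper's own proof: the forward implication via Propositions \ref{proposition3.6} and \ref{proposition4.4}, and the converse via the coordinatewise decomposition $\theta =\theta _1\times \ldots \times \theta _n$ (Remark \ref{oremarca}), the isomorphism $j$ with Remark \ref{transpcong}, and Lemma \ref{lemma3.3}, your identification (b) being exactly the commutativity ${\rm FC}(\phi )\circ f=g\circ h\circ p$ that the paper verifies in its long displayed computation. The only difference is presentational: you lift a single $\psi $ elementwise through Lemma \ref{lemma4.0}, whereas the paper packages the same content as surjectivity of the Boolean morphism ${\rm FC}(\phi )$ read off a commutative diagram of Boolean algebras.
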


\begin{proof} The statement on CBLP is known from \cite{cblp}.

The direct implication in the statement on FCLP follows from Propositions \ref{proposition3.6} and \ref{proposition4.4}.

Now assume that $A_1,\ldots ,A_n$ have FCLP, and let $\phi \in {\rm Con}(A)$. By Remark \ref{4***}, there exist $\phi _1\in {\rm Con}(A_1),\ldots ,$ $\phi _n\in {\rm Con}(A_n)$ such that $\phi =\phi _1\times \ldots \times \phi _n$, and it follows that $\displaystyle A/\phi \cong \prod _{i=1}^nA_i/\phi _i$. Let $\displaystyle j:\prod _{i=1}^nA_i/\phi _i\rightarrow A/\phi $ be the isomorphism from Remark \ref{oremarca}: for all $a_1\in A_1,\ldots ,a_n\in A_n$, $j(a_1/\phi _1,\ldots ,a_n/\phi _n)=(a_1,\ldots ,a_n)/\phi $. Let $\displaystyle g:{\rm FC}(\prod _{i=1}^nA_i/\phi _i)\rightarrow {\rm FC}(A/\phi )$, defined by: for all $\displaystyle \beta \in {\rm FC}(\prod _{i=1}^nA_i/\phi _i)$, $g(\beta )=j(\beta )=\{(j(a),j(b))\ |\ (a,b)\in \beta \}$. By Remark \ref{transpcong}, $g$ is a Boolean isomorphism. Let $\displaystyle f:\prod _{i=1}^n{\rm FC}(A_i)\rightarrow {\rm FC}(A)$ and $\displaystyle h:\prod _{i=1}^n{\rm FC}(A_i/\phi _i)\rightarrow {\rm FC}(\prod _{i=1}^nA_i/\phi _i)$ be the Boolean isomorphisms from Lemma \ref{lemma3.3}: for all $\alpha _1\in {\rm FC}(A_1),\ldots ,\alpha _n\in {\rm FC}(A_n)$, $f(\alpha _1,\ldots ,\alpha _n)=\alpha _1\times \ldots \times \alpha _n$, and, for all $\gamma _1\in {\rm FC}(A_1/\phi _1),\ldots ,\gamma _n\in {\rm FC}(A_n/\phi _n)$, $h(\gamma _1,\ldots ,\gamma _n)=\gamma _1\times \ldots \times \gamma _n$. Let us denote by $\displaystyle p=\prod _{i=1}^n{\rm FC}(\phi _i):\prod _{i=1}^n{\rm FC}(A_i)\rightarrow \prod _{i=1}^n{\rm FC}(A_i/\phi _i)$, defined in the usual way: for all $\alpha _1\in {\rm FC}(A_1),\ldots ,\alpha _n\in {\rm FC}(A_n)$, $p(\alpha _1,\ldots ,\alpha _n)=({\rm FC}(\phi _1)(\alpha _1),\ldots ,{\rm FC}(\phi _n)(\alpha _n))=(u_{\phi _1})(\alpha _1),\ldots ,u_{\phi _n})(\alpha _n))=((\alpha _1\vee \phi _1)/\phi _1,\ldots ,(\alpha _n\vee \phi _n)/\phi _n)$. Then, clearly, $p$ is a Boolean morphism. The following diagram in the category of Boolean algebras is commutative:\vspace*{-10pt}

\begin{center}\begin{picture}(200,53)(0,0)
\put(2,30){${\rm FC}(A)$}
\put(33,33){\vector(1,0){42}}
\put(16,7){\vector(0,1){20}}
\put(18,13){$f$}
\put(198,13){$h$}
\put(196,7){\vector(0,1){20}}
\put(69,-18){$\displaystyle p=\prod _{i=1}^n{\rm FC}(\phi _i)$}
\put(42,0){\vector(1,0){120}}
\put(40,38){${\rm FC}(\phi )$}
\put(78,30){${\rm FC}(A/\phi )$}
\put(-5,-3){$\displaystyle \prod _{i=1}^n{\rm FC}(A_i)$}
\put(162,30){$\displaystyle {\rm FC}(\prod _{i=1}^nA_i/\phi _i)$}
\put(162,-3){$\displaystyle \prod _{i=1}^n{\rm FC}(A_i/\phi _i)$}
\put(160,33){\vector(-1,0){42}}
\put(139,38){$g$}
\end{picture}\end{center}\vspace*{26pt}

Indeed, the following hold, for all $\alpha _1\in {\rm FC}(A_1),\ldots ,\alpha _n\in {\rm FC}(A_n)$:$${\rm FC}(\phi )(f(\alpha _1,\ldots ,\alpha _n))=u_{\phi }(f(\alpha _1,\ldots ,\alpha _n))=(f(\alpha _1,\ldots ,\alpha _n)\vee \phi )/\phi =$$$$((\alpha _1\times \ldots \times \alpha _n)\vee (\phi _1\times \ldots \times \phi _n))/(\phi _1\times \ldots \times \phi _n)=$$$$((\alpha _1\vee \phi _1)\times \ldots \times ((\alpha _n\vee \phi _n))/(\phi _1\times \ldots \times \phi _n)=((\alpha _1\vee \phi _1)\times \ldots \times ((\alpha _n\vee \phi _n))/\phi $$$$\mbox{and }g(h(p(\alpha _1,\ldots ,\alpha _n)))=g(h((\alpha _1\vee \phi _1)/\phi _1,\ldots ,(\alpha _n\vee \phi _n)/\phi _n))=$$$$g((\alpha _1\vee \phi _1)/\phi _1\times \ldots \times (\alpha _n\vee \phi _n)/\phi _n)=j((\alpha _1\vee \phi _1)/\phi _1\times \ldots \times (\alpha _n\vee \phi _n)/\phi _n)=$$$$\{(j(a_1,\ldots ,a_n),j(b_1,\ldots ,b_n))\ |\ ((a_1,\ldots ,a_n),(b_1,\ldots ,b_n))\in (\alpha _1\vee \phi _1)/\phi _1\times \ldots \times (\alpha _n\vee \phi _n)/\phi _n\}=$$$$\{(j(a_1,\ldots ,a_n),j(b_1,\ldots ,b_n))\ |\ (a_1,b_1)\in (\alpha _1\vee \phi _1)/\phi _1,\ldots ,(a_n,b_n)\in (\alpha _n\vee \phi _n)/\phi _n\}=$$$$\{(j(c_1/\phi _1,\ldots ,c_n/\phi _n),j(d_1/\phi _1,\ldots ,d_n/\phi _n))\ |\ (c_1/\phi _1,d_1/\phi _1)\in (\alpha _1\vee \phi _1)/\phi _1,\ldots ,(c_n/\phi _n,d_n/\phi _n)\in (\alpha _n\vee \phi _n)/\phi _n\}$$$$=\{((c_1,\ldots ,c_n)/\phi ,(d_1,\ldots ,d_n)/\phi )\ |\ (c_1,d_1)\in \alpha _1\vee \phi _1,\ldots ,(c_n,d_n)\in \alpha _n\vee \phi _n\}=$$$$\{((c_1,\ldots ,c_n)/\phi ,(d_1,\ldots ,d_n)/\phi )\ |\ ((c_1,\ldots ,c_n),(d_1,\ldots ,d_n))\in (\alpha _1\vee \phi _1)\times \ldots \times (\alpha _n\vee \phi _n)\}=$$$$((\alpha _1\vee \phi _1)\times \ldots \times (\alpha _n\vee \phi _n))/\phi ={\rm FC}(\phi)(f(\alpha _1,\ldots ,\alpha _n)).$$Since $A_1,\ldots ,A_n$ have FCLP, it follows that $\phi _1,\ldots ,\phi _n$ have FCLP, that is ${\rm FC}(\phi _1),\ldots ,{\rm FC}(\phi _n)$ are surjective, hence $\displaystyle p=\prod _{i=1}^n{\rm FC}(\phi _i)$ is surjective. But, as we have seen, ${\rm FC}(\phi )\circ f=g\circ h\circ p$, and $f,g,h$ are bijections. Therefore ${\rm FC}(\phi )$ is surjective, which means that $\phi $ has FCLP. Thus $A$ has FCLP.\end{proof}

\begin{proposition} If $A$ is an arithmetical algebra, then:\begin{enumerate}
\item\label{proposition4.2(1)} $\theta $ has FCLP iff $\theta $ has CBLP;
\item\label{proposition4.2(2)} $A$ has FCLP iff $A$ has CBLP.\end{enumerate}\label{proposition4.2}\end{proposition}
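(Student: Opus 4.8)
The plan is to reduce both lifting properties to literally the same Boolean morphism, exploiting the fact recorded in the preliminaries that in an arithmetical algebra the factor congruences and the Boolean congruences coincide. Concretely, since $A$ is arithmetical all of its congruences permute with respect to composition, so in particular every $\phi \in {\cal B}({\rm Con}(A))$ satisfies $\phi \circ \neg\,\phi = \neg\,\phi \circ \phi$; by the characterization ${\rm FC}(A)=\{\phi \in {\cal B}({\rm Con}(A))\ |\ \phi \circ \neg\,\phi = \neg\,\phi \circ \phi\}$ this gives ${\rm FC}(A) = {\cal B}({\rm Con}(A))$. Next, because every quotient of an arithmetical algebra is again arithmetical, $A/\theta$ is itself arithmetical, and the same argument yields ${\rm FC}(A/\theta) = {\cal B}({\rm Con}(A/\theta))$.

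With these two identifications in hand, I would observe that the morphisms in question act between the same pair of Boolean algebras: both ${\rm FC}(\theta)\colon {\rm FC}(A) \rightarrow {\rm FC}(A/\theta)$ and ${\cal B}(u_{\theta})\colon {\cal B}({\rm Con}(A)) \rightarrow {\cal B}({\rm Con}(A/\theta))$ have domain ${\rm FC}(A) = {\cal B}({\rm Con}(A))$ and codomain ${\rm FC}(A/\theta) = {\cal B}({\rm Con}(A/\theta))$. Moreover, by their definitions in Section \ref{fclp} both are restrictions of the single bounded lattice morphism $u_{\theta}$, namely ${\rm FC}(\theta) = u_{\theta}\mid_{{\rm FC}(A)}$ and ${\cal B}(u_{\theta}) = u_{\theta}\mid_{{\cal B}({\rm Con}(A))}$. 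Since the two subalgebras being restricted to coincide, these restrictions are one and the same map. Hence ${\rm FC}(\theta)$ is surjective if and only if ${\cal B}(u_{\theta})$ is surjective, which is exactly the statement that $\theta$ has FCLP if and only if $\theta$ has CBLP, establishing (\ref{proposition4.2(1)}).

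Finally, (\ref{proposition4.2(2)}) would follow immediately by quantifying (\ref{proposition4.2(1)}) over all congruences: $A$ has FCLP iff every $\theta \in {\rm Con}(A)$ has FCLP iff, by part (\ref{proposition4.2(1)}), every $\theta \in {\rm Con}(A)$ has CBLP iff $A$ has CBLP. I do not anticipate any genuine obstacle; the whole content of the proposition is the coincidence ${\rm FC} = {\cal B}({\rm Con})$ for arithmetical algebras, and the only point requiring a moment of care is checking that $A/\theta$ stays arithmetical, so that the coincidence applies on the target side and the two restrictions of $u_{\theta}$ really do share a common codomain.
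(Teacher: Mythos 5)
Your proof is correct and takes essentially the same route as the paper's: both arguments rest on the identities ${\rm FC}(A)={\cal B}({\rm Con}(A))$ and ${\rm FC}(A/\theta )={\cal B}({\rm Con}(A/\theta ))$ (the latter via the fact that $A/\theta $ is again arithmetical), from which ${\rm FC}(\theta )$ and ${\cal B}(u_{\theta })$ are literally the same restriction of $u_{\theta }$, so the two surjectivity conditions coincide. Your part (ii) is likewise exactly the paper's, namely an immediate consequence of part (i) applied to every congruence.
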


\begin{proof} (\ref{proposition4.2(1)}) If $A$ is arithmetical, then so is $A/\theta $, thus ${\cal B}({\rm Con}(A))={\rm FC}(A)$ and ${\cal B}({\rm Con}(A/\theta ))={\rm FC}(A/\theta )$, hence ${\rm FC}(\theta )={\cal B}(u_{\theta })\mid _{{\rm FC}(A)}={\cal B}(u_{\theta })$, thus ${\rm FC}(\theta )$ is surjective iff ${\cal B}(u_{\theta })$ is surjective, that is $\theta $ has FCLP iff $\theta $ has CBLP.

\noindent (\ref{proposition4.2(2)}) By (\ref{proposition4.2(1)}).\end{proof}

\begin{lemma}{\rm \cite{cblp}}\begin{itemize}\item Any bounded distributive lattice has CBLP.
\item Any algebra from a discriminator equational class has CBLP.\end{itemize}\label{cblpabvd}\end{lemma}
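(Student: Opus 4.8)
The plan is to prove the two assertions by reducing each to the same topological fact: that a clopen subset of a closed subspace of a Boolean space extends to a clopen subset of the whole space. Throughout I would use the reformulation from Remark \ref{cblpcuv}: a congruence $\theta $ has CBLP exactly when ${\cal B}(v_{\theta }):{\cal B}({\rm Con}(A))\rightarrow {\cal B}([\theta ))$ is onto, i.e.\ when every $\psi \in {\cal B}([\theta ))$ can be written as $\phi \vee \theta $ with $\phi \in {\cal B}({\rm Con}(A))$. Fixing an arbitrary $\theta $, the whole problem is thus to produce such a lift $\phi $.

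\emph{Bounded distributive lattices.} Let $L$ be such a lattice. The structural input I would invoke is the duality theory of distributive lattices: ${\rm Con}(L)$ is (up to order reversal) isomorphic to the lattice of closed subsets of the Priestley dual space $X$ of $L$, and under this correspondence the complemented congruences --- the members of ${\cal B}({\rm Con}(L))$ --- are precisely the clopen subsets of $X$. (This is genuinely larger than ${\rm FC}(L)$ in general: by Lemma \ref{lemma3.2} the factor congruences match only ${\cal B}(L)$, i.e.\ the clopen subsets that, together with their complements, are up--sets.) The quotient $L\twoheadrightarrow L/\theta $ corresponds to the closed subspace $Z\subseteq X$ attached to $\theta $, and ${\cal B}([\theta ))\cong {\cal B}({\rm Con}(L/\theta ))$ is then the Boolean algebra of clopen subsets of $Z$. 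Consequently the surjectivity of ${\cal B}(v_{\theta })$ says exactly that every clopen $D$ of $Z$ has the form $C\cap Z$ for some clopen $C$ of $X$. Here I would use that $X$ is a Boolean space (compact, Hausdorff, zero--dimensional): since $D$ is clopen in the closed set $Z$, both $D$ and $Z\setminus D$ are closed, hence compact, in $X$, and disjoint compact sets in a Boolean space are separated by a clopen $C$ with $D\subseteq C$ and $C\cap (Z\setminus D)=\emptyset $, so $C\cap Z=D$. Pulling $C$ back through the duality gives the required $\phi \in {\cal B}({\rm Con}(L))$.

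\emph{Discriminator equational classes.} Let $A$ lie in such a class. These classes are arithmetical, so Proposition \ref{proposition4.2} lets me replace CBLP by FCLP and identify ${\cal B}({\rm Con}(A))={\rm FC}(A)$ (the same holding for every quotient, which stays in the class). I would then reduce FCLP to a clopen--extension problem via the Boolean product representation valid in any discriminator variety: every such $A$ is isomorphic to a Boolean product of simple algebras over a Boolean space $Y$, whence ${\rm FC}(A)$ is the clopen algebra of $Y$; the factor spectrum of a quotient $A/\theta $ sits as a closed subspace of $Y$, with ${\rm FC}(\theta )$ acting as the restriction of clopens. Surjectivity of ${\rm FC}(\theta )$ is then, once more, the extension of a clopen across a closed subspace of a Boolean space, settled by the separation argument above.

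The main obstacle in both parts is exactly the production of the lift, i.e.\ the clopen--extension step, and this is where the hypotheses do their work: it is the distributivity of $L$ (respectively the discriminator term in $A$) that forces ${\rm Con}(L)$ (respectively ${\rm Con}(A)$) to be carried by a Boolean dual space on which disjoint compacta can be separated by clopens. A congruence--distributive algebra lacking such structure need not admit the lift --- consistently with the later sections, where CBLP is shown not to be universal. Two points would still need care: verifying the stated congruence/closed--set and factor/clopen dictionaries (and, for the second part, that the factor spectrum of $A/\theta $ really is a closed subspace with ${\rm FC}(\theta )$ the corresponding restriction map); and, if one prefers a duality--free proof of the first part, replacing the single separation step by an explicit lattice--theoretic description of the complemented congruences of $L$ together with a direct construction of $\phi $, at the cost of a longer computation.
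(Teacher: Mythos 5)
Your proposal is correct in outline, but note first that the paper you were checked against contains no proof of this lemma at all: it is imported verbatim from \cite{cblp}, and the surrounding machinery of the present paper (notably the characterization ``CBLP iff B--normality'' of Proposition \ref{cblpbn}) indicates that the source's arguments are purely algebraic --- lattice--theoretic manipulations inside ${\rm Con}(A)$, using for distributive lattices the classical description of their congruence lattices and, for discriminator classes, the fact that principal congruences there are (factor) congruences permuting with everything --- rather than topological. Your route is genuinely different: you translate the surjectivity of ${\cal B}(v_{\theta })$ (via Remark \ref{cblpcuv}) into the extension of a clopen set across a closed subspace of a Boolean space, once through Priestley duality (congruences of $L$ $\leftrightarrow$ closed subsets of the dual space, ${\cal B}({\rm Con}(L))$ $\leftrightarrow$ clopens, $v_{\theta }$ $\leftrightarrow$ restriction) and once through the Keimel--Werner Boolean product representation in discriminator varieties, and both times you settle it by separating the disjoint compacta $D$ and $Z\setminus D$ by a clopen of the ambient space --- a valid argument, and the dictionaries you invoke are standard. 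What your approach buys is conceptual unity (both bullets become the same separation lemma, and it is transparent where distributivity, respectively the discriminator term, enters: they force ${\rm Con}$ to be carried by a Boolean dual space); what the algebraic route buys is self--containedness, no appeal to representation or duality theorems, and compatibility with the paper's own normality framework. Two remarks on the points you yourself flag: (i) in the discriminator case the load--bearing fact is that ${\rm Con}(A)$ is isomorphic to the frame of open subsets of the representation space (this is where the discriminator hypothesis is used; for general Boolean products it fails), and you should cite it explicitly, e.g.\ from Burris--Sankappanavar; (ii) granting that fact, you can bypass the ``factor spectrum of $A/\theta $'' step entirely, since every open $W\supseteq O$ (where $\theta $ corresponds to the open set $O$ and $Z=Y\setminus O$) satisfies $W=O\cup (W\cap Z)$, so $[\theta )$ is isomorphic to the open--set frame of $Z$ by the trace map, and your separation argument then lifts each complemented element of $[\theta )$ directly --- with that, and with the sanity check via Lemma \ref{lemma3.2} that you correctly distinguish ${\cal B}({\rm Con}(L))$ from the smaller ${\rm FC}(L)$, your proof closes.
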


\begin{proposition}\begin{itemize}
\item Any Boolean algebra has FCLP.
\item Any algebra from a discriminator equational class has FCLP.\end{itemize}\label{fclpabvd}\end{proposition}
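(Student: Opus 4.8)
The plan is to reduce both assertions to the Congruence Boolean Lifting Property by means of Proposition \ref{proposition4.2}, exploiting the fact that the two classes of algebras in question are arithmetical. Recall from Section \ref{preliminaries} that an arithmetical algebra $A$ satisfies ${\rm FC}(A)={\cal B}({\rm Con}(A))$, and that Proposition \ref{proposition4.2}, (\ref{proposition4.2(2)}) states that, for such an $A$, having FCLP is equivalent to having CBLP. Hence, once arithmeticity is in place, it only remains to invoke the known CBLP facts gathered in Lemma \ref{cblpabvd}.

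For the first bullet, let $A$ be a Boolean algebra. Since Boolean algebras are arithmetical (as recalled in Section \ref{preliminaries}), Proposition \ref{proposition4.2}, (\ref{proposition4.2(2)}) reduces the claim to proving that $A$ has CBLP. Here I would use two facts from the preliminaries: first, that the congruences of a Boolean algebra coincide with the congruences of its underlying bounded distributive lattice $L$, so that ${\rm Con}(A)$ and ${\rm Con}(L)$ are the same lattice; and second, that by Remark \ref{cblpcuv} the CBLP of a congruence $\theta $ amounts to surjectivity of ${\cal B}(v_{\theta }):{\cal B}({\rm Con}(A))\rightarrow {\cal B}([\theta ))$, a condition expressed entirely in terms of the congruence lattice and the join-with-$\theta $ map $v_{\theta }$. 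Consequently $A$ has CBLP iff $L$ does, and $L$ has CBLP by the first item of Lemma \ref{cblpabvd}. Therefore $A$ has CBLP, hence FCLP.

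For the second bullet, let $A$ lie in a discriminator equational class. I would first recall the standard fact (see \cite{bur}) that every discriminator equational class is arithmetical; thus $A$ is arithmetical and Proposition \ref{proposition4.2}, (\ref{proposition4.2(2)}) again turns the FCLP into the CBLP. By the second item of Lemma \ref{cblpabvd}, $A$ has CBLP, and therefore $A$ has FCLP.

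The argument is short because Proposition \ref{proposition4.2} and Lemma \ref{cblpabvd} do the real work; the only points needing care are the two auxiliary observations that Boolean algebras and discriminator algebras are arithmetical, and that the CBLP of a Boolean algebra can be read off its underlying lattice. I expect the mildest obstacle to be this last transfer, which rests on the fact that the diagram in Remark \ref{cblpcuv} involves $A$ only through the pair $({\rm Con}(A),v_{\theta })$, and this pair is unchanged on passing between $A$ and $L$ because the two share the same congruences. I would also note that the first bullet is in fact a special case of the second, the variety of Boolean algebras being itself a discriminator variety; the direct argument above is kept for self--containedness.
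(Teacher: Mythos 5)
Your proof is correct and follows essentially the same route as the paper: both reduce FCLP to CBLP via Proposition \ref{proposition4.2}, (\ref{proposition4.2(2)}), using arithmeticity of Boolean algebras and of discriminator algebras, and then invoke Lemma \ref{cblpabvd}. Your explicit justification of the transfer of CBLP from the underlying bounded distributive lattice to the Boolean algebra (via the coincidence of congruences and Remark \ref{cblpcuv}) merely spells out a step the paper leaves implicit, relying on the preliminary observation that the congruences of a Boolean algebra coincide with those of its underlying lattice.
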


\begin{proof} By Proposition \ref{proposition4.2}, (\ref{proposition4.2(2)}), Lemma \ref{cblpabvd} and the fact that Boolean algebras are arithmetical algebras, and, according to \cite{bj}, algebras from discriminator equational classes are arithmetical algebras, as well.\end{proof}

\begin{proposition} If $A$ is an arithmetical algebra, then:\begin{itemize}
\item $A$ is semilocal and it has CBLP iff $A$ is semilocal and ${\rm Rad}(A)$ has CBLP iff $A$ is isomorphic to a finite direct product of local algebras iff $A$ is semilocal and it has FCLP iff $A$ is semilocal and ${\rm Rad}(A)$ has FCLP;
\item $A$ is maximal and it has CBLP iff $A$ is maximal and ${\rm Rad}(A)$ has CBLP iff $A$ is isomorphic to a finite direct product of local maximal algebras iff $A$ is maximal and it has FCLP iff $A$ is maximal and ${\rm Rad}(A)$ has FCLP.\end{itemize}\end{proposition}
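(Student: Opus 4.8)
The plan is to use the three conditions that mention only CBLP as the backbone of each chain, and then to attach the two FCLP conditions by means of Proposition \ref{proposition4.2}, which applies because $A$ is arithmetical. For the semilocal bullet, the equivalences ``$A$ is semilocal with CBLP'' iff ``$A$ is semilocal with ${\rm Rad}(A)$ having CBLP'' iff ``$A$ is isomorphic to a finite direct product of local algebras'' constitute the CBLP representation theorem for arithmetical algebras established in \cite{cblp}; I would simply cite these as the backbone and concentrate on grafting the FCLP statements onto it.

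To splice in the FCLP conditions, I would invoke Proposition \ref{proposition4.2}. Part (\ref{proposition4.2(2)}) gives that $A$ has CBLP iff $A$ has FCLP, so ``$A$ is semilocal with CBLP'' is equivalent to ``$A$ is semilocal with FCLP''. Part (\ref{proposition4.2(1)}), applied to the congruence $\theta ={\rm Rad}(A)$, gives that ${\rm Rad}(A)$ has CBLP iff ${\rm Rad}(A)$ has FCLP, so ``$A$ is semilocal with ${\rm Rad}(A)$ having CBLP'' is equivalent to ``$A$ is semilocal with ${\rm Rad}(A)$ having FCLP''. Inserting these two equivalences into the backbone renders all five conditions mutually equivalent. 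The maximal bullet is handled in exactly the same way: its three CBLP conditions are equivalent by \cite{cblp}, and the two FCLP conditions are attached verbatim through Proposition \ref{proposition4.2}, (\ref{proposition4.2(2)}) for $A$ and (\ref{proposition4.2(1)}) for ${\rm Rad}(A)$, with ``local algebras'' replaced throughout by ``local maximal algebras''.

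I expect no genuine obstacle, since the entire substance sits in the cited CBLP theorem while the passage to FCLP is immediate: arithmeticity forces ${\rm FC}(A)={\cal B}({\rm Con}(A))$, so FCLP and CBLP coincide both for $A$ and for each of its congruences, and the new conditions are perfect duplicates of old ones. The only step that would be hard were the CBLP chain not citable is the implication from ``$A$ semilocal with CBLP'' to the product decomposition: there one would pass to $A/{\rm Rad}(A)$, split it via the Chinese Remainder Theorem (Corollary \ref{corollary3.5}) using the pairwise comaximal maximal congruences, lift the resulting factor congruences back to $A$ through CBLP, and recover the decomposition by Proposition \ref{proposition3.6}. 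Since \cite{cblp} supplies precisely this, the proof reduces to the bookkeeping described above.
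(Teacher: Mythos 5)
Your proposal is correct and matches the paper's proof essentially verbatim: the paper likewise takes the three CBLP equivalences as a previously established backbone and attaches the two FCLP conditions via Proposition \ref{proposition4.2}, parts (\ref{proposition4.2(1)}) and (\ref{proposition4.2(2)}). The only discrepancy is bibliographic, not mathematical: the paper attributes the CBLP backbone to \cite{ggcm} rather than \cite{cblp}.
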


\begin{proof} In each of the two statements, the first two equivalences have been proven in \cite{ggcm}, and the rest follow from Proposition \ref{proposition4.2}.\end{proof}

\begin{definition}{\rm \cite{cblp}} We say that the algebra $A$ is {\em B--normal} iff, for all $\phi ,\psi \in {\rm Con}(A)$ such that $\phi \vee \psi =\nabla _{A}$, there exist $\alpha ,\beta \in {\cal B}({\rm Con}(A))$ such that $\alpha \cap \beta =\Delta _{A}$ and $\phi \vee \alpha =\psi \vee \beta =\nabla _{A}$.\end{definition}

Note that $A$ is a B--normal algebra iff ${\rm Con}(A)$ is a B--normal lattice.

\begin{proposition}{\rm \cite{cblp}} $A$ has CBLP iff $A$ is B--normal.\label{cblpbn}\end{proposition}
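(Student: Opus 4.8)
The plan is to treat this as a purely lattice--theoretic equivalence inside the bounded distributive lattice ${\rm Con}(A)$, exploiting Remark~\ref{cblpcuv}: $\theta $ has CBLP precisely when the Boolean morphism ${\cal B}(v_{\theta }):{\cal B}({\rm Con}(A))\rightarrow {\cal B}([\theta ))$ is surjective, that is, when every complemented element of the interval $[\theta )$ is of the form $\gamma \vee \theta $ for some $\gamma \in {\cal B}({\rm Con}(A))$; thus $A$ has CBLP iff this holds for every $\theta \in {\rm Con}(A)$. I would prove the two implications separately, using throughout that ${\cal B}(v_{\theta })$ is a Boolean morphism satisfying $\neg _{\theta }(\psi \vee \theta )=\neg \, \psi \vee \theta $ (established in the discussion preceding Proposition~\ref{proposition3.7}) and that ${\rm Con}(A)$ is distributive.

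For the implication CBLP~$\Rightarrow $~B--normal, I would start from $\phi ,\psi \in {\rm Con}(A)$ with $\phi \vee \psi =\nabla _{A}$ and set $\theta =\phi \cap \psi $. Since $\theta \subseteq \phi $ and $\theta \subseteq \psi $, both $\phi $ and $\psi $ lie in $[\theta )$, where $\phi \vee \psi =\nabla _{A}$ is the top and $\phi \cap \psi =\theta $ is the bottom; hence $\phi $ and $\psi $ are mutually complementary in $[\theta )$, so in particular $\psi \in {\cal B}([\theta ))$. Applying CBLP to $\theta $ gives $\alpha \in {\cal B}({\rm Con}(A))$ with $\alpha \vee \theta =\psi $, and I set $\beta =\neg \, \alpha $, so that $\alpha \cap \beta =\Delta _{A}$ holds automatically. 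The two remaining equalities then drop out: $\phi \vee \alpha =\phi \vee \alpha \vee \theta =\phi \vee \psi =\nabla _{A}$ (absorbing $\theta \subseteq \phi $), while $\beta \vee \theta =\neg _{\theta }(\alpha \vee \theta )=\neg _{\theta }\psi =\phi $ by the Boolean--morphism property, whence $\psi \vee \beta =\psi \vee \beta \vee \theta =\psi \vee \phi =\nabla _{A}$.

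For the converse, B--normal~$\Rightarrow $~CBLP, I would fix $\theta $ and an arbitrary $\chi \in {\cal B}([\theta ))$ with complement $\chi ^{\prime }=\neg _{\theta }\chi $, so that $\chi \vee \chi ^{\prime }=\nabla _{A}$ and $\chi \cap \chi ^{\prime }=\theta $. Applying B--normality to $\chi \vee \chi ^{\prime }=\nabla _{A}$ yields $\alpha ,\beta \in {\cal B}({\rm Con}(A))$ with $\alpha \cap \beta =\Delta _{A}$, $\chi \vee \alpha =\nabla _{A}$ and $\chi ^{\prime }\vee \beta =\nabla _{A}$. The claim is that $\beta $ already lifts $\chi $, i.e.\ $\beta \vee \theta =\chi $, which I would prove by a two--sided inclusion. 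From $\chi ^{\prime }\vee (\beta \vee \theta )=\nabla _{A}$ and the fact that $\chi ^{\prime }$ is complemented in $[\theta )$ with complement $\chi $, I get $\beta \vee \theta \supseteq \chi $; symmetrically $\alpha \vee \theta \supseteq \chi ^{\prime }$. For the reverse inclusion I would use the distributive identity $(\alpha \vee \theta )\cap (\beta \vee \theta )=(\alpha \cap \beta )\vee \theta =\theta $ to obtain $(\beta \vee \theta )\cap \chi ^{\prime }\subseteq (\beta \vee \theta )\cap (\alpha \vee \theta )=\theta $, and then, expanding $\beta \vee \theta =(\beta \vee \theta )\cap (\chi \vee \chi ^{\prime })$ distributively, conclude $\beta \vee \theta \subseteq \chi $. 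Hence $\beta \vee \theta =\chi $, so ${\cal B}(v_{\theta })$ is surjective; as $\theta $ is arbitrary, $A$ has CBLP.

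The routine part is the forward direction, where the single idea of passing to the interval $[\theta )$ with $\theta =\phi \cap \psi $ makes $\phi ,\psi $ complementary and lets CBLP do the work. The main obstacle is the reverse direction: B--normality only guarantees a $\beta $ with $\chi ^{\prime }\vee \beta =\nabla _{A}$, which a priori yields merely $\beta \vee \theta \supseteq \chi $, and the delicate point is upgrading this to equality. This hinges on combining $\alpha \cap \beta =\Delta _{A}$ with the distributivity of ${\rm Con}(A)$ twice---once to force the meet $(\alpha \vee \theta )\cap (\beta \vee \theta )$ down to $\theta $, and once to absorb $(\beta \vee \theta )\cap \chi ^{\prime }=\theta $ into the complement $\chi $---so the congruence--distributivity of $A$ is used in an essential way here.
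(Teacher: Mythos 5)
Your proof is correct; every step checks against facts the paper has already established (distributivity of ${\rm Con}(A)$, Remark \ref{cblpcuv}, and the identity $\neg _{\theta }(\gamma \vee \theta )=\neg \, \gamma \vee \theta $ for $\gamma \in {\cal B}({\rm Con}(A))$ from the discussion preceding Proposition \ref{proposition3.7}). Note, however, that the paper does not prove this proposition at all — it is imported from \cite{cblp} — so the only in-paper argument to compare with is the proof of the FCLP analogue, Proposition \ref{fclpfcn}. Your argument has the same skeleton as that proof: forward, pass to $\theta =\phi \cap \psi $ so that $\phi $ and $\psi $ become a complementary pair in $[\theta )$ and lift one of them; converse, start from a complemented $\chi \in {\cal B}([\theta ))$ and extract a lift from the normality witness. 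The genuine difference is that you execute everything inside the bounded distributive lattice ${\rm Con}(A)$ via the surjectivity of ${\cal B}(v_{\theta })$, whereas the FCLP proof is forced to work at the level of the quotient algebra $A/(\phi \cap \psi )$ with pairs of factor congruences (Remark \ref{caractfc}) and to track $\circ $ alongside $\vee $, since ${\rm FC}(A)$ is not determined by the lattice ${\rm Con}(A)$ alone (Example \ref{ex}). Your route makes visible that the CBLP/B--normality equivalence is purely a statement about the congruence lattice, with no permutability bookkeeping — which is exactly the simplification the Boolean (as opposed to factor) setting affords. One available streamlining in your converse: from $\alpha \cap \beta =\Delta _{A}$ you get $\beta \subseteq \neg \, \alpha $, and from $\chi \vee \alpha =\nabla _{A}$ with $\alpha $ complemented you get $\neg \, \alpha =\neg \, \alpha \cap (\chi \vee \alpha )=\neg \, \alpha \cap \chi \subseteq \chi $, so $\beta \vee \theta \subseteq \chi $ follows at once and your second distributivity computation can be dropped; this is the same trick the paper uses in the converse of Proposition \ref{fclpfcn}, where $\psi \vee \neg \, \alpha =\nabla _{A}$ yields $\alpha \subseteq \psi $.
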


\begin{definition} We say that the algebra $A$ is {\em FC--normal} iff, for all $\phi ,\psi \in {\rm Con}(A)$ such that $\phi \circ \psi =\nabla _{A}$, there exist $\alpha ,\beta \in {\rm FC}(A)$ such that $\alpha \cap \beta =\Delta _{A}$ and $\phi \circ \alpha =\psi \circ \beta =\nabla _{A}$.\label{fcn}\end{definition}

By an observation in Section \ref{preliminaries}, the condition that $\phi \circ \psi =\nabla _{A}$ in Definition \ref{fcn} implies $\phi \circ \psi =\psi \circ \phi $, as well as $\phi \vee \psi =\nabla _{A}$, because $\phi \circ \psi =\nabla _{A}\in {\rm Con}(A)$ implies $\phi \vee \psi =\phi \circ \psi =\nabla _{A}$. By Remark \ref{fc}, (\ref{fc1}), the equalities $\phi \circ \alpha =\psi \circ \beta =\nabla _{A}$ in Definition \ref{fcn} are equivalent to $\phi \vee \alpha =\psi \vee \beta =\nabla _{A}$.

\begin{remark} If $A$ is an arithmetical algebra, then: $A$ is B--normal iff $A$ is FC--normal, by Remark \ref{fc}, (\ref{fc2}), and the fact that any arithmetical algebra $A$ has ${\cal B}({\rm Con}(A))={\rm FC}(A)$.\label{aritmnorm}\end{remark}

\begin{remark} $A$ is FC--normal iff, for any $\phi ,\psi \in {\rm Con}(A)$ such that $\phi \circ \psi =\nabla _{A}$, there exists an $\alpha \in {\rm FC}(A)$ such that $\phi \vee \alpha =\psi \vee \neg \, \alpha =\nabla _{A}$, which, in turn, is equivalent to $\phi \circ \alpha =\psi \circ \neg \, \alpha =\nabla _{A}$, according to Remark \ref{fc}, (\ref{fc1}). Indeed, since $\alpha \cap \neg \, \alpha =\Delta _{A}$ and $\neg \, \alpha \in {\rm FC}(A)$ for any $\alpha \in {\rm FC}(A)$, the converse implication is trivial. As for the direct implication, for any $\alpha ,\beta \in {\cal B}({\rm Con}(A))\supseteq {\rm FC}(A)$, the fact that $\alpha \cap \beta =\Delta _{A}$ means that $\beta \subseteq \neg \, \alpha $, thus, for any $\psi \in {\rm Con}(A)$, the equality $\psi \vee \beta =\nabla _{A}$ implies $\psi \vee \neg \, \alpha \supseteq \psi \vee \beta =\nabla _{A}$, thus $\psi \vee \neg \, \alpha =\nabla _{A}$.\label{fccuneg}\end{remark}

\begin{proposition} $A$ has FCLP iff A is FC--normal.\label{fclpfcn}\end{proposition}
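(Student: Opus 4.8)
The plan is to prove both implications by reducing, in each case, to a single quotient $A/\theta$ and transporting the relevant equalities back and forth through the bounded lattice isomorphism $s_{\theta }^{-1}$ and the composition-to-quotient identity $(\alpha \circ \beta )/\theta =\alpha /\theta \circ \beta /\theta $ for $\alpha ,\beta \in [\theta )$ established in Section \ref{preliminaries}. Throughout I will use Remark \ref{fccuneg} to replace FC--normality by its single-complement form, and Remark \ref{fc}, (\ref{fc1}), to pass freely between $\circ $ and $\vee $ for members of ${\rm FC}(A)$.

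For the implication ``FCLP implies FC--normal'', let $\phi ,\psi \in {\rm Con}(A)$ with $\phi \circ \psi =\nabla _{A}$ and set $\theta =\phi \cap \psi $, so that $\phi ,\psi \in [\theta )$. Then $\phi /\theta \cap \psi /\theta =(\phi \cap \psi )/\theta =\Delta _{A/\theta }$ and $\phi /\theta \circ \psi /\theta =(\phi \circ \psi )/\theta =\nabla _{A/\theta }$, so by Remark \ref{caractfc}, (\ref{caractfc1}), $(\phi /\theta ,\psi /\theta )$ is a pair of factor congruences of $A/\theta $; in particular $\phi /\theta \in {\rm FC}(A/\theta )$ and $\psi /\theta =\neg \, (\phi /\theta )$. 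Since $A$ has FCLP, ${\rm FC}(\theta )$ is surjective, so there is an $\alpha \in {\rm FC}(A)$ with $u_{\theta }(\alpha )=\phi /\theta $, i.e.\ $\alpha \vee \theta =\phi $ by the injectivity of $s_{\theta }^{-1}$; applying the Boolean morphism ${\rm FC}(\theta )$ to $\neg \, \alpha $ gives $\neg \, \alpha \vee \theta =\psi $. Using $\theta \subseteq \phi $ and $\theta \subseteq \psi $, one computes $\phi \vee \neg \, \alpha =\phi \vee (\theta \vee \neg \, \alpha )=\phi \vee \psi =\nabla _{A}$ and $\psi \vee \alpha =\psi \vee (\theta \vee \alpha )=\psi \vee \phi =\nabla _{A}$, so $\neg \, \alpha \in {\rm FC}(A)$ is exactly the witness required by Remark \ref{fccuneg}, and $A$ is FC--normal.

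For the converse, assume $A$ is FC--normal and fix $\theta \in {\rm Con}(A)$; by Lemma \ref{lemma4.0}, together with the identity ${\rm FC}(A/\theta )={\rm FC}([\theta ))/\theta $ recorded before Proposition \ref{proposition3.7}, it suffices, for each $\psi \in [\theta )$ with $\psi /\theta \in {\rm FC}(A/\theta )$, to produce a $\phi \in {\rm FC}(A)$ with $\phi \vee \theta =\psi $. Choose $\chi \in [\theta )$ with $\chi /\theta =\neg \, (\psi /\theta )$ in ${\rm FC}(A/\theta )$; then $(\psi /\theta ,\chi /\theta )$ is a factor congruence pair of $A/\theta $, whence $\psi /\theta \circ \chi /\theta =\nabla _{A/\theta }$, and unwinding this through the equivalence ``$(a/\theta ,b/\theta )\in \gamma /\theta $ iff $(a,b)\in \gamma $'' for $\gamma \in [\theta )$ yields $\psi \circ \chi =\nabla _{A}$. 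Now apply FC--normality, in the form of Remark \ref{fccuneg}, to the pair $(\chi ,\psi )$: there is an $\alpha \in {\rm FC}(A)$ with $\chi \vee \alpha =\nabla _{A}$ and $\psi \vee \neg \, \alpha =\nabla _{A}$. Applying the bounded lattice morphism $u_{\theta }$, which restricts to the Boolean morphism ${\cal B}(u_{\theta })$ on ${\cal B}({\rm Con}(A))$, and writing $a=u_{\theta }(\alpha )\in {\rm FC}(A/\theta )$, $p=\psi /\theta =u_{\theta }(\psi )$ and $\neg \, p=\chi /\theta =u_{\theta }(\chi )$, these two equalities become $\neg \, p\vee a=\nabla _{A/\theta }$ and $p\vee \neg \, a=\nabla _{A/\theta }$ in the Boolean algebra ${\cal B}({\rm Con}(A/\theta ))$, which force $p\leq a$ and $a\leq p$, hence $u_{\theta }(\alpha )=\psi /\theta $; equivalently $\alpha \vee \theta =\psi $, so $\phi =\alpha $ is the desired lift and $\theta $ has FCLP. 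As $\theta $ was arbitrary, $A$ has FCLP.

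The step I expect to demand the most care is the passage between compositions in $A$ and in $A/\theta $: I must be sure that $\phi \circ \psi =\nabla _{A}$ genuinely yields $\phi /\theta \circ \psi /\theta =\nabla _{A/\theta }$ (using that $\nabla _{A}$ is a congruence, so that the composition-to-quotient identity applies), and, in the converse direction, that $\psi /\theta \circ \chi /\theta =\nabla _{A/\theta }$ upstairs reflects back to $\psi \circ \chi =\nabla _{A}$ downstairs, so that FC--normality is genuinely applicable to a pair whose composition equals $\nabla _{A}$. The remaining manipulations are the routine Boolean facts $\neg \, p\vee a=1\Leftrightarrow p\leq a$ and $p\vee \neg \, a=1\Leftrightarrow a\leq p$, combined with the injectivity of $s_{\theta }^{-1}$.
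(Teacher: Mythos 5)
Your argument is correct and follows essentially the same route as the paper's proof: the forward direction quotients by $\theta =\phi \cap \psi $, recognizes $(\phi /\theta ,\psi /\theta )$ as a pair of factor congruences via Remark \ref{caractfc}, (\ref{caractfc1}), lifts it through the surjective ${\rm FC}(\theta )$ and concludes by Remark \ref{fccuneg}, while the converse extracts a $\chi \in [\theta )$ with $\psi \circ \chi =\nabla _{A}$ and applies FC--normality, exactly as the paper does. The only local variation is how you identify the lift: the paper deduces $\alpha \subseteq \psi $ and computes $\alpha \vee \theta =(\alpha \vee \phi )\cap (\alpha \vee \psi )=\psi $ by distributivity of ${\rm Con}(A)$, whereas you push the equalities $\chi \vee \alpha =\psi \vee \neg \, \alpha =\nabla _{A}$ through $u_{\theta }$ and use Boolean antisymmetry in ${\cal B}({\rm Con}(A/\theta ))$ before pulling back with the injective $s_{\theta }^{-1}$ --- both are sound, and your implicit use of $\chi \circ \psi =\psi \circ \chi $ when invoking Remark \ref{fccuneg} is harmless, since $\psi \circ \chi =\nabla _{A}\in {\rm Con}(A)$ forces permutability by the observations in Section \ref{preliminaries}.
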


\begin{proof} For the direct implication, assume that $A$ has FCLP, and let $\phi ,\psi \in {\rm Con}(A)$ such that $\phi \circ \psi =\nabla _{A}$. Then $\phi \cap \psi $ has FCLP, that is the Boolean morphism ${\rm FC}(\phi \cap \psi )$ is surjective. We have: $\phi ,\psi \in [\phi \cap \psi )$, $\phi /(\phi \cap \psi )\cap \psi /(\phi \cap \psi )=(\phi \cap \psi )/(\phi \cap \psi )=\Delta _{A/(\phi \cap \psi )}$ and $\phi /(\phi \cap \psi )\circ \psi /(\phi \cap \psi )=(\phi \circ \psi )/(\phi \cap \psi )=\nabla _{A}/(\phi \cap \psi )=\nabla _{A/(\phi \cap \psi )}$, hence $\phi /(\phi \cap \psi ),\psi /(\phi \cap \psi )$ is a pair of factor congruences by Remark \ref{caractfc}, (\ref{caractfc1}), so $\phi /(\phi \cap \psi ),\psi /(\phi \cap \psi )\in {\rm FC}(A/(\phi \cap \psi ))$ and $\psi /(\phi \cap \psi )=\neg \, (\phi /(\phi \cap \psi ))$. Since ${\rm FC}(\phi \cap \psi ):{\rm FC}(A)\rightarrow {\rm FC}(A/(\phi \cap \psi ))$ is surjective, it follows that there exists an $\alpha \in {\rm FC}(A)$ such that $\phi /(\phi \cap \psi )={\rm FC}(\phi \cap \psi )(\alpha )=(\alpha \vee (\phi \cap \psi ))/(\phi \cap \psi )$, and so $(\neg \, \alpha \vee (\phi \cap \psi ))/(\phi \cap \psi )={\rm FC}(\phi \cap \psi )(\neg \, \alpha )=\neg \, {\rm FC}(\phi \cap \psi )(\alpha )=\neg \, (\phi /(\phi \cap \psi ))=\psi /(\phi \cap \psi )$. Therefore $\phi =\alpha \vee (\phi \cap \psi )$ and $\psi =\neg \, \alpha \vee (\phi \cap \psi )$, thus $\phi \vee \neg \, \alpha =\psi \vee \alpha =\alpha \vee \neg \, \alpha \vee (\phi \cap \psi )=\nabla _{A}\vee (\phi \cap \psi )=\nabla _{A}$. By Remark \ref{fccuneg}, it follows that $A$ is FC--normal.

For the converse implication, assume that $A$ is FC--normal, and let $\rho \in {\rm Con}(A)$ and $\phi \in [\rho )$ such that $\phi /\rho \in {\rm FC}(A/\rho )$. Then, according to Remark \ref{caractfc}, (\ref{caractfc1}), there exists a $\psi \in [\rho )$ such that $\phi /\rho \cap \psi /\rho =\Delta _{A/\rho }$ and $\phi /\rho \circ \psi /\rho =\nabla _{A/\rho }$, so that $\phi /\rho \vee \psi /\rho =\nabla _{A/\rho }$, $\rho /\rho =\Delta _{A/\rho }=(\phi \cap \psi )/\rho $ and $\nabla _{A}/\rho =\nabla _{A/\rho }=(\phi \circ \psi )/\rho $, thus $\phi \cap \psi =\rho $, $\phi \circ \psi =\nabla _{A}$ and $\phi /\rho =\neg \, (\psi /\rho )$. By Remark \ref{fccuneg}, since $A$ is FC--normal, it follows that there exists an $\alpha \in {\rm FC}(A)$ which fulfills $\phi \vee \alpha =\psi \vee \neg \, \alpha =\nabla _{A}$, hence $\psi \supseteq \neg \, \neg \, \alpha =\alpha $. Thus $\neg \, \alpha \in {\rm FC}(A)$ and $\alpha \vee \rho =\alpha \vee (\phi \cap \psi )=(\alpha \vee \phi )\cap (\alpha \vee \psi )=\nabla _{A}\cap \psi =\psi $, so ${\rm FC}(\rho )(\alpha )=(\alpha \vee \rho )/\rho =\psi /\rho $, hence $\phi /\rho =\neg \, (\psi /\rho )=\neg \, {\rm FC}(\rho )(\alpha )={\rm FC}(\rho )(\neg \, \alpha )$. Therefore ${\rm FC}(\rho )$ is surjective, which means that $\rho $ has FCLP. Thus $A$ has FCLP.\end{proof}

\begin{corollary} If $A$ is an arithmetical algebra, then: $A$ has FCLP iff $A$ has CBLP iff $A$ is FC--normal iff $A$ is B--normal.\label{lpnorm}\end{corollary}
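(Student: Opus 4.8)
The plan is to assemble the corollary directly from the four equivalences that have already been established, since, under the arithmetical hypothesis, each of the asserted biconditionals is an instance of a previously proved result. First I would invoke Proposition \ref{proposition4.2}, (\ref{proposition4.2(2)}), which states that, for arithmetical $A$, the algebra $A$ has FCLP iff it has CBLP. Next, Proposition \ref{cblpbn} supplies the equivalence between CBLP and B--normality for any congruence--distributive algebra, hence in particular for $A$; and Proposition \ref{fclpfcn} supplies the equivalence between FCLP and FC--normality. Finally, Remark \ref{aritmnorm} records that, when $A$ is arithmetical, B--normality and FC--normality coincide (this rests on Remark \ref{fc}, (\ref{fc2}), and on the identity ${\cal B}({\rm Con}(A))={\rm FC}(A)$ for arithmetical algebras).

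Chaining these, I would close the cycle of equivalences: $A$ has FCLP iff $A$ is FC--normal (Proposition \ref{fclpfcn}), iff $A$ is B--normal (Remark \ref{aritmnorm}), iff $A$ has CBLP (Proposition \ref{cblpbn}), and back to FCLP (Proposition \ref{proposition4.2}, (\ref{proposition4.2(2)})). In fact any three of these four links already force all four properties to be mutually equivalent, with the remaining one serving only as a consistency check; so one has some freedom in choosing which citations to present.

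The arithmetical hypothesis enters only through Proposition \ref{proposition4.2}, (\ref{proposition4.2(2)}), and Remark \ref{aritmnorm}, which are precisely the two statements that genuinely require it, whereas Propositions \ref{cblpbn} and \ref{fclpfcn} hold for all congruence--distributive algebras. There is essentially no obstacle to overcome here: all the substantive work was carried out in the cited results, and the corollary amounts to a bookkeeping of equivalences. I would therefore write the proof as a short citation of these four results, noting explicitly that the arithmetical assumption is what makes the two lifting properties, and correspondingly the two normality conditions, collapse onto one another.
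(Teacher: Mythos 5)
Your proposal is correct and matches the paper's own proof, which likewise just cites Proposition \ref{proposition4.2}, Proposition \ref{cblpbn}, Proposition \ref{fclpfcn} and Remark \ref{aritmnorm}; your observation that any three of the four links suffice corresponds exactly to the paper's remark that these results ``provide several proofs'' of the corollary.
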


\begin{proof} Propositions \ref{proposition4.2}, \ref{cblpbn} and \ref{fclpfcn} and Remark \ref{aritmnorm} provide several proofs for this corollary.\end{proof}

\section{FCLP versus CBLP and BLP in Residuated Lattices and Bounded Distributive Lattices}
\label{vsblp}

Now let us study the relations between FCLP, CBLP and the Boolean Lifting
Property in residuated lattices and bounded distributive lattices. For a further study of the properties of residuated lattices that we use in what follows, we refer the reader to \cite{gal}, \cite{haj}, \cite{ior}, \cite{jits}, \cite{joh}, \cite{kow}, \cite{pic}, \cite{tur}.

We recall that a {\em (commutative) residuated lattice} is an algebra $(R,\vee ,\wedge ,\odot ,\rightarrow ,0,1)$ of type $(2,2,2,2,0,0)$, where $(R,\vee ,\wedge ,0,1)$ is a bounded lattice, $(R,\odot ,1)$ is a commutative monoid and the following property, called the {\em law of residuation}, holds for every $a,b,c\in R$: $a\odot b\leq c$ iff $a\leq b\rightarrow c$, where $\leq $ is the partial order of the lattice $(R,\vee ,\wedge )$. For any $a,b\in R$, we denote by $a\leftrightarrow b=(a\rightarrow b)\wedge (b\rightarrow a)$. For any $a\in R$ and any $n\in \N $, we denote by $a^0=1$ and $a^{n+1}=a^n\odot a$.

It is well known that residuated lattices are arithmetical algebras, and that the underlying bounded lattice of a residuated lattice $R$, although not necessarily distributive, is uniquely complemented and has the property that its set of complemented elements is a Boolean sublattice of $R$; this Boolean algebra is denoted ${\cal B}(R)$ and called the {\em Boolean center} of $R$. If $R$ and $S$ are residuated lattices and $f:R\rightarrow S$ is a residuated lattice morphism, then $f({\cal B}(R))\subseteq {\cal B}(S)$, and ${\cal B}(f)=f\mid _{{\cal B}(R)}:{\cal B}(R)\rightarrow {\cal B}(S)$ is a Boolean morphism. Thus ${\cal B}$ becomes a covariant functor from the category of residuated lattices to that of Boolean algebras. We consider that denoting this functor the same as the one from the category of bounded distributive lattices to that of Boolean algebras poses no danger of confusion.

If $R$ is a bounded distributive lattice or a residuated lattice and $\phi \in {\rm Con}(R)$, then: we say that $\phi $ fulfills the {\em Boolean Lifting Property} (abbreviated {\em BLP}) iff the Boolean morphism ${\cal B}(p_{\phi }):{\cal B}(R)\rightarrow {\cal B}(R/\phi )$ is surjective, and we say that $R$ fulfills the {\em Boolean Lifting Property} ({\em BLP}) iff all congruences of $R$ fulfill the BLP (\cite{blpiasi}, \cite{blpdacs}, \cite{ggcm}, \cite{dcggcm}). Notice that, for any $\phi \in {\rm Con}(R)$: $\phi $ has the BLP iff ${\cal B}(R/\phi )={\cal B}(R)/\phi $ iff ${\cal B}(R/\phi )\subseteq {\cal B}(R)/\phi $, since the inclusion ${\cal B}(R)/\phi \subseteq {\cal B}(R/\phi )$ always holds.

Throughout the rest of this section, $R$ shall be an arbitrary residuated lattice and $L$ shall be an arbitrary bounded distributive lattice, unless mentioned otherwise. For any $F,G\in {\rm Filt}(L)$, we denote by $F\vee G=[F\cup G)$, and, for any $I,J\in {\rm Id}(L)$, we denote by $I\vee J=(I\cup J]$. We recall that $({\rm Filt}(L),\vee ,\cap ,\{1\},L)$ and $({\rm Id}(L),\vee ,\cap ,\{0\},L)$ are bounded distributive lattices embedded in ${\rm Con}(L)$.

We recall that the {\em filters} of $R$ are the non--empty subsets of $R$ which are closed with respect to $\odot $ and to upper bounds. We shall denote by ${\rm Filt}(R)$ the set of the filters of $R$. Just as in the case of bounded distributive lattices, ${\rm Filt}(R)$ is closed with respect to arbitrary intersections, thus, for any $X\subseteq R$, there exists the {\em filter of $R$ generated by $X$}, which we shall denote by $[X)$. For any $x\in R$, $[\{x\})$ is also denoted by $[x)$ and it is called the {\em principal filter of $R$ generated by $x$}; its elements are: $[x)=\{y\in R\ |\ (\exists \, n\in \N ^*)\, (x^n\leq y)\}$. We shall denote by ${\rm PFilt}(R)$ the set of the principal filters of $R$. Just as in lattices, for any $F,G\in {\rm Filt}(R)$, we denote by $F\vee G=[F\cup G)$. $({\rm Filt}(R),\vee ,\cap ,\{1\},R)$ is a bounded distributive lattice isomorphic to ${\rm Con}(R)$, and having ${\rm PFilt}(R)$ as bounded sublattice. Let $F\in {\rm Filt}(R)$. Just as in bounded distributive lattices, $F$ is called a {\em prime filter} iff, for all $x,y\in R$, $x\vee y\in F$ implies $x\in F$ or $y\in F$, and $F$ is called a {\em maximal filter} iff it is a maximal element of ${\rm Filt}(R)\setminus \{R\}$ with respect to $\subseteq $. $R$ is called a {\em local residuated lattice} iff it has a unique maximal filter.

Let us denote by $f:{\rm Filt}(R)\rightarrow {\rm Con}(R)$ the canonical bounded lattice isomorphism, and by $g:{\rm Filt}(L)\rightarrow {\rm Con}(L)$ and $h:{\rm Id}(L)\rightarrow {\rm Con}(L)$ the canonical bounded lattice embeddings: for all $F\in {\rm Filt}(R)$, $G\in {\rm Filt}(L)$ and $I\in {\rm Id}(L)$, $f(F)=\{(x,y)\in R^2\ |\ x\leftrightarrow y\in F\}=\{(x,y)\in R^2\ |\ (\exists \, a\in F)\, (x\odot a=y\odot a)\}$, $g(G)=\{(x,y)\in L^2\ |\ (\exists \, a\in G)\, (x\wedge a=y\wedge a)\}$ and $h(I)=\{(x,y)\in L^2\ |\ (\exists \, a\in I)\, (x\vee a=y\vee a)\}$. For any $F\in {\rm Filt}(R)$, $G\in {\rm Filt}(L)$ and $I\in {\rm Id}(L)$, we say that $F$, $G$, respectively $I$, has the {\em Boolean Lifting Property (BLP)} iff the congruence $f(F)$, $g(G)$, respectively $h(I)$, has the BLP. Clearly, $R$ has the BLP iff all its filters have the BLP. We say that $L$ has the {\em Boolean Lifting Property for filters} (abbreviated {\em Filt--BLP}) iff all filters of $L$ have the BLP. We say that $L$ has the {\em Boolean Lifting Property for ideals} (abbreviated {\em Id--BLP}) iff all ideals of $L$ have the BLP (\cite{blpiasi}, \cite{blpdacs}, \cite{dcggcm}). Clearly, if $L$ has BLP, then $L$ has Filt--BLP and Id--BLP.

Let ${\cal L}$ be the {\em reticulation functor} for residuated lattices: a covariant functor from the category of residuated lattices to that of bounded distributive lattices which takes every residuated lattice $S$ to a bounded distributive lattice ${\cal L}(S)$ whose set of prime filters, endowed with the Stone topology, is homeomorphic to that of $S$ (\cite{eu3}, \cite{dcggcm}, \cite{eu1}, \cite{eu}, \cite{eu2}, \cite{eu4}, \cite{eu5}, \cite{eu7}); ${\cal L}(S)$ is uniquely determined, up to a bounded lattice isomorphism, by: ${\cal L}(S)$ is isomorphic to the dual of the bounded distributive lattice ${\rm PFilt}(S)$, thus we may take ${\cal L}(S)=({\rm PFilt}(S),\cap ,\vee ,[0)=S,[1)=\{1\})$. ${\cal L}(S)$ is called the {\em reticulation} of $S$. ${\cal L}$ has good preservation properties, which make it adequate for transferring many algebraic and topological results from the category of bounded distributive lattices to that of residuated lattices.

If we denote by $\lambda :R\rightarrow {\rm PFilt}(R)={\cal L}(R)$ the canonical surjection: for all $a\in R$, $\lambda (a)=[a)$, then the direct image of $\lambda $ sets a bijection from ${\rm Filt}(R)$ to ${\rm Filt}({\cal L}(R))$: $F\in {\rm Filt}(R)\mapsto \lambda (F)=\{\lambda (x)\ |\ x\in F\}\in {\rm Filt}({\cal L}(R))$.

\begin{lemma}{\rm \cite[Proposition $5.19$]{dcggcm}}\begin{enumerate}
\item\label{l1(1)} For any $F\in {\rm Filt}(R)$: $F$ has BLP (in $R$) iff $\lambda (F)$ has BLP (in ${\cal L}(R)$).
\item\label{l1(2)} $R$ has BLP iff ${\cal L}(R)$ has Filt--BLP.\end{enumerate}\label{l1}\end{lemma}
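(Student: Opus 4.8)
The plan is to exploit the two fundamental preservation properties of the reticulation functor ${\cal L}$: that it preserves Boolean centers, and that it is compatible with quotients. Concretely, I would first record that the canonical surjection $\lambda :R\rightarrow {\cal L}(R)$ restricts to a Boolean isomorphism ${\cal B}(R)\rightarrow {\cal B}({\cal L}(R))$, sending a complemented element $a\in {\cal B}(R)$ to the principal filter $[a)$, which is exactly a complemented element of the distributive lattice ${\rm PFilt}(R)={\cal L}(R)$. Second, I would record that, for any $F\in {\rm Filt}(R)$, applying ${\cal L}$ to the canonical projection $p_{f(F)}:R\rightarrow R/f(F)$ yields a surjective bounded lattice morphism ${\cal L}(R)\rightarrow {\cal L}(R/f(F))$ whose kernel is the congruence of ${\cal L}(R)$ associated with the filter $\lambda (F)$, so that ${\cal L}(R/f(F))\cong {\cal L}(R)/\lambda (F)$ canonically. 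Both of these are instances of the good preservation properties of ${\cal L}$ invoked in the preliminaries, and establishing them cleanly is the part I expect to require the most care.

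Granting these, the core of (\ref{l1(1)}) is a diagram chase. Recall from the discussion of BLP above that $F$ has BLP in $R$ exactly when ${\cal B}(R/f(F))\subseteq {\cal B}(R)/f(F)$, and likewise $\lambda (F)$ has BLP in ${\cal L}(R)$ exactly when ${\cal B}({\cal L}(R)/\lambda (F))\subseteq {\cal B}({\cal L}(R))/\lambda (F)$. I would assemble the commutative square whose vertical arrows are the Boolean isomorphisms ${\cal B}(R)\rightarrow {\cal B}({\cal L}(R))$ and ${\cal B}(R/f(F))\rightarrow {\cal B}({\cal L}(R/f(F)))\cong {\cal B}({\cal L}(R)/\lambda (F))$ induced by $\lambda $, and whose horizontal arrows are the two BLP morphisms ${\cal B}(p_{f(F)})$ and ${\cal B}(p_{\lambda (F)})$. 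The commutativity of this square is precisely the functoriality of ${\cal L}$ applied to $p_{f(F)}$, combined with the naturality of the Boolean--center isomorphism. Since the vertical arrows are isomorphisms, the top horizontal arrow is surjective if and only if the bottom one is, which is exactly the assertion that $F$ has BLP iff $\lambda (F)$ has BLP.

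Finally, (\ref{l1(2)}) is immediate from (\ref{l1(1)}). By definition $R$ has BLP iff every $F\in {\rm Filt}(R)$ has BLP, and ${\cal L}(R)$ has Filt--BLP iff every filter of ${\cal L}(R)$ has BLP. Since the direct image of $\lambda $ is a bijection from ${\rm Filt}(R)$ onto ${\rm Filt}({\cal L}(R))$, part (\ref{l1(1)}) turns the first universally quantified statement into the second. The only genuine work lies in (\ref{l1(1)}), and within it in the two preservation properties of ${\cal L}$; once the commutative square is in place, both implications fall out symmetrically.
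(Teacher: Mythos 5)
The paper itself gives no proof of this lemma: it is quoted from \cite[Proposition $5.19$]{dcggcm}, so there is no internal argument to compare yours against. That said, your proposal is sound and is, in essence, the standard argument behind the cited result. The two preservation facts you isolate --- that $\lambda $ restricts to a Boolean isomorphism ${\cal B}(R)\rightarrow {\cal B}({\cal L}(R))$, and that ${\cal L}(R/f(F))\cong {\cal L}(R)/g(\lambda (F))$ canonically --- are exactly the ``good preservation properties'' of the reticulation invoked in the preliminaries and established in the literature the paper cites (\cite{eu1}, \cite{eu2}, \cite{eu}, \cite{dcggcm}); given them, your commutative square with isomorphic vertical arrows correctly transfers surjectivity of ${\cal B}(p_{f(F)})$ to surjectivity of ${\cal B}(p_{g(\lambda (F))})$ and back, which is precisely the equivalence in (i). Two details deserve the care you anticipate: surjectivity of ${\cal B}(\lambda )$, i.e.\ that every complemented element of ${\cal L}(R)=({\rm PFilt}(R),\cap ,\vee ,[0),[1))$ is of the form $[e)$ for some $e\in {\cal B}(R)$, is the genuinely nontrivial half of the first fact (note the dual order, so the complement conditions on principal filters translate into $e\vee e^{\prime }=1$ and $(e\odot e^{\prime })^n=0$, from which $e\in {\cal B}(R)$ must be extracted); and in the second fact the quotient of ${\cal L}(R)$ must be taken modulo the lattice congruence $g(\lambda (F))$ induced by the filter $\lambda (F)$, which is indeed what you identify as the kernel of ${\cal L}(p_{f(F)})$. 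Part (ii) then follows from part (i) exactly as you say, via the bijection $F\mapsto \lambda (F)$ between ${\rm Filt}(R)$ and ${\rm Filt}({\cal L}(R))$ recorded just before the lemma.
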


\begin{proposition}\begin{enumerate}
\item\label{plr1} For any $F\in {\rm Filt}(R)$: $F$ has BLP iff $f(F)$ has BLP iff $f(F)$ has CBLP iff $f(F)$ has FCLP.
\item\label{plr2} For any $\phi \in {\rm Con}(R)$: $\phi $ has BLP iff $\phi $ has CBLP iff $\phi $ has FCLP.
\item\label{plr3} $R$ has BLP iff $R$ has CBLP iff $R$ has FCLP.\end{enumerate}\label{plr}\end{proposition}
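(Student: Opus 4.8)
The plan is to funnel every asserted equivalence into the single statement that a congruence of $R$ has BLP if and only if it has CBLP, and to exploit that residuated lattices are arithmetical algebras. Since $R$ is arithmetical, so is each quotient $R/\phi $, whence ${\cal B}({\rm Con}(R))={\rm FC}(R)$ and ${\cal B}({\rm Con}(R/\phi ))={\rm FC}(R/\phi )$; thus Proposition \ref{proposition4.2}, (\ref{proposition4.2(1)}), already yields that $\phi $ has CBLP iff $\phi $ has FCLP, for every $\phi \in {\rm Con}(R)$. Also, in (\ref{plr1}) the equivalence ``$F$ has BLP iff $f(F)$ has BLP'' holds by the very definition of the BLP of a filter of $R$. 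Hence the whole proposition reduces to proving, for an arbitrary $\phi \in {\rm Con}(R)$, that $\phi $ has BLP iff $\phi $ has CBLP.

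For this core step I would bridge the two Boolean algebras underlying the two properties, namely the Boolean center ${\cal B}(R)$ of the residuated lattice and ${\cal B}({\rm Con}(R))={\rm FC}(R)$. I would consider the map $\beta _R:{\cal B}(R)\rightarrow {\rm FC}(R)$ given by $\beta _R(a)=Cg_R(a,1)=f([a))$; here $[a)$ is a complemented filter of $R$ (with complement $[\neg \, a)$), so $\beta _R$ indeed lands in ${\cal B}({\rm Con}(R))={\rm FC}(R)$. The first task is to check that $\beta _R$ is a Boolean isomorphism; equivalently, that $a\mapsto [a)$ maps ${\cal B}(R)$ bijectively onto ${\cal B}({\rm Filt}(R))$, after which one transports along the isomorphism $f:{\rm Filt}(R)\rightarrow {\rm Con}(R)$. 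The second, and decisive, task is naturality with respect to the projection $p_{\phi }:R\rightarrow R/\phi $: I claim the square with top edge ${\cal B}(p_{\phi }):{\cal B}(R)\rightarrow {\cal B}(R/\phi )$, bottom edge ${\cal B}(u_{\phi })={\rm FC}(\phi ):{\rm FC}(R)\rightarrow {\rm FC}(R/\phi )$, and vertical isomorphisms $\beta _R,\beta _{R/\phi }$ commutes. This amounts to the identity $Cg_{R/\phi }(a/\phi ,1/\phi )=(Cg_R(a,1)\vee \phi )/\phi $ for all $a\in {\cal B}(R)$, which is exactly Lemma \ref{l2.1} applied to $M=\{(a,1)\}$ and $\theta =\phi $. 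Granting commutativity, from ${\cal B}(u_{\phi })=\beta _{R/\phi }\circ {\cal B}(p_{\phi })\circ \beta _R^{-1}$ and the bijectivity of the $\beta $'s we get that ${\cal B}(p_{\phi })$ is surjective iff ${\cal B}(u_{\phi })$ is surjective, that is, $\phi $ has BLP iff $\phi $ has CBLP.

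The three items then fall out formally. Item (\ref{plr1}) chains the definitional equivalence ``$F$ has BLP iff $f(F)$ has BLP'', the core equivalence ``$f(F)$ has BLP iff $f(F)$ has CBLP'', and Proposition \ref{proposition4.2}, (\ref{proposition4.2(1)}), for ``$f(F)$ has CBLP iff $f(F)$ has FCLP''. Item (\ref{plr2}) is immediate, because $f:{\rm Filt}(R)\rightarrow {\rm Con}(R)$ is a bijection, so every $\phi \in {\rm Con}(R)$ equals $f(F)$ with $F=f^{-1}(\phi )$. Item (\ref{plr3}) follows by quantifying (\ref{plr2}) over all of ${\rm Con}(R)$, invoking Definitions \ref{definition4.1} and \ref{defcblp} and the fact that $R$ has BLP iff all its congruences (equivalently, all its filters) have BLP.

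I expect the genuine difficulty to lie not in the naturality square, which Lemma \ref{l2.1} hands us once $\beta _R$ is written as $a\mapsto Cg_R(a,1)$, but in establishing that $\beta _R$ is a Boolean \emph{isomorphism}: one must show that the complemented elements of ${\rm Con}(R)$ (equivalently, of ${\rm Filt}(R)$) are precisely the principal congruences $Cg_R(a,1)$ with $a\in {\cal B}(R)$, and that this correspondence preserves the Boolean operations. This is a structural fact about residuated lattices analogous to Lemma \ref{lemma3.2}; alternatively, the BLP--CBLP equivalence could be obtained by routing $F$ through its reticulation image via Lemma \ref{l1}, though transferring CBLP along ${\cal L}$ is less direct and I would prefer the $\beta _R$ route.
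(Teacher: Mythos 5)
Your proposal is correct, and its skeleton coincides with the paper's: the definitional equivalence ``$F$ has BLP iff $f(F)$ has BLP'', Proposition \ref{proposition4.2}, (\ref{proposition4.2(1)}), for ``CBLP iff FCLP'' via arithmeticity, the bijection $f:{\rm Filt}(R)\rightarrow {\rm Con}(R)$ for item (\ref{plr2}), and quantification for item (\ref{plr3}). Where you genuinely diverge is the core step: the paper does not prove ``$F$ has BLP iff $f(F)$ has CBLP'' at all --- it simply cites this as a result already established in \cite{cblp} --- whereas you give a self-contained argument via $\beta _R(a)=Cg_R(a,1)=f([a))$ and the naturality square with ${\cal B}(p_{\phi })$ on top, ${\cal B}(u_{\phi })={\rm FC}(\phi )$ on the bottom, and commutativity supplied by Lemma \ref{l2.1}; this transplants to residuated lattices exactly the technique the paper itself uses for bounded distributive lattices in the proof of Proposition \ref{p0}. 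Your route buys self-containedness within this paper, at the cost of the structural lemma you correctly flag as the real work: that the complemented filters of $R$ are precisely $[e)$ with $e\in {\cal B}(R)$. That lemma is true and known (it is essentially contained in \cite{ggcm}), and closes quickly: if $F\cap G=\{1\}$ and $F\vee G=R$, pick $x\in F$, $y\in G$ with $x\odot y=0$; then $x\vee y\in F\cap G$ forces $x\vee y=1$; setting $e=y\rightarrow 0$, one has $x\leq e$, so $e\in F$, $e\vee y=1$, and $e\wedge y=(e\wedge y)\odot (e\vee y)=0$, so $e\in {\cal B}(R)$; finally, for $z\in F$, $z\vee y\in F\cap G$ gives $z\vee y=1$, whence $e=(e\odot z)\vee (e\odot y)=e\odot z\leq z$, so $F=[e)$. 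One small correction: since $a\mapsto [a)$ is order-reversing (and $[a)\cap [b)=[a\vee b)$, $[a)\vee [b)=[a\wedge b)$ on Boolean elements), your $\beta _R$ is a Boolean \emph{dual} isomorphism rather than an isomorphism; either replace it by $a\mapsto Cg_R(\neg \, a,1)$, or simply note that the surjectivity transfer only needs $\beta _R$ and $\beta _{R/\phi }$ to be bijections, and the square commutes as maps of sets either way, so the argument is unaffected.
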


\begin{proof} (\ref{plr1}) Let $F\in {\rm Filt}(R)$. By the definition of the BLP for filters, $F$ has BLP iff $f(F)$ has BLP. In \cite{cblp}, we have proven that $F$ has BLP iff $f(F)$ has CBLP. By Proposition \ref{proposition4.2}, (\ref{proposition4.2(1)}), and the fact that $R$ is an arithmetical algebra, $f(F)$ has CBLP iff $f(F)$ has FCLP.

\noindent (\ref{plr2}) By (\ref{plr1}) and the fact that $f:{\rm Filt}(R)\rightarrow {\rm Con}(R)$ is a bijection.

\noindent (\ref{plr3}) By (\ref{plr2}).\end{proof}

\begin{proposition}\begin{enumerate}
\item\label{p0(1)} For any $\phi \in {\rm Con}(L)$: $\phi $ has BLP iff $\phi $ has FCLP.
\item\label{p0(2)} $L$ has BLP iff $L$ has FCLP.\end{enumerate}\label{p0}\end{proposition}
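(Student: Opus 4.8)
The plan is to transport both properties, via the canonical Boolean isomorphism of Lemma \ref{lemma3.2}, from the Boolean centers to the Boolean algebras of factor congruences, and thereby reduce the two surjectivity conditions defining the BLP and the FCLP to one and the same statement. Since $L$ is a bounded distributive lattice, so is every quotient $L/\phi$ (the canonical surjection $p_{\phi }$ being a bounded lattice morphism); hence Lemma \ref{lemma3.2} applies to both $L$ and $L/\phi$, yielding Boolean isomorphisms $f_L\colon {\cal B}(L)\rightarrow {\rm FC}(L)$, $a\mapsto Cg_L(a,0)$, and $f_{L/\phi }\colon {\cal B}(L/\phi )\rightarrow {\rm FC}(L/\phi )$, $a/\phi \mapsto Cg_{L/\phi }(a/\phi ,0/\phi )$ (recall $p_{\phi }(0)=0/\phi $ is the bottom of $L/\phi $).

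For part (\ref{p0(1)}), I would show that the square with horizontal edges $f_L$, $f_{L/\phi }$, left edge ${\cal B}(p_{\phi })\colon {\cal B}(L)\rightarrow {\cal B}(L/\phi )$ (whose surjectivity defines the BLP of $\phi $) and right edge ${\rm FC}(\phi )\colon {\rm FC}(L)\rightarrow {\rm FC}(L/\phi )$ (whose surjectivity defines the FCLP of $\phi $) commutes, that is, ${\rm FC}(\phi )\circ f_L=f_{L/\phi }\circ {\cal B}(p_{\phi })$. Evaluating on an arbitrary $a\in {\cal B}(L)$, the right-hand side gives $f_{L/\phi }(a/\phi )=Cg_{L/\phi }(a/\phi ,0/\phi )$, while the left-hand side gives ${\rm FC}(\phi )(Cg_L(a,0))=(Cg_L(a,0)\vee \phi )/\phi $; these coincide by Lemma \ref{l2.1} applied with $M=\{(a,0)\}$ and $\theta =\phi $, since $\{(a,0)\}/\phi =\{(a/\phi ,0/\phi )\}$. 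This verification of commutativity, resting on Lemma \ref{l2.1}, is the one nontrivial step and the heart of the argument.

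Granting commutativity, I would conclude as follows. Because $f_L$ is onto, ${\rm FC}(\phi )$ is surjective iff ${\rm FC}(\phi )\circ f_L$ is; because $f_{L/\phi }$ is a bijection, ${\cal B}(p_{\phi })$ is surjective iff $f_{L/\phi }\circ {\cal B}(p_{\phi })$ is; and the two composites are equal. Hence ${\rm FC}(\phi )$ is surjective iff ${\cal B}(p_{\phi })$ is, which by Definition \ref{definition4.1} and the definition of the BLP says exactly that $\phi $ has FCLP iff $\phi $ has BLP. Part (\ref{p0(2)}) is then immediate from (\ref{p0(1)}): $L$ has BLP iff every $\phi \in {\rm Con}(L)$ has BLP iff every such $\phi $ has FCLP iff $L$ has FCLP. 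I do not anticipate any genuine obstacle beyond the bookkeeping needed to confirm that $f_{L/\phi }$ is a legitimate instance of Lemma \ref{lemma3.2} and that the bottom element $0/\phi $ of $L/\phi $ lines up so that the commuting square is correctly stated.
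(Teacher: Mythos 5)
Your proposal is correct and follows essentially the same route as the paper's own proof: the identical commuting square built from the isomorphisms $f_L$ and $f_{L/\phi }$ of Lemma \ref{lemma3.2}, with commutativity verified via Lemma \ref{l2.1} exactly as the paper does, and the same surjectivity transfer argument. The only cosmetic difference is that you spell out the two-sided surjectivity bookkeeping ($f_L$ onto, $f_{L/\phi }$ bijective) slightly more explicitly than the paper, which simply invokes that both are bijections.
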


\begin{proof} (\ref{p0(1)}) Let $\phi \in {\rm Con}(L)$. According to Lemma \ref{lemma3.2}, the following functions are Boolean isomorphisms: $f_L:{\cal B}(L)\rightarrow {\rm FC}(L)$ and $f_{L/\phi }:{\cal B}(L/\phi )\rightarrow {\rm FC}(L/\phi )$, defined by: for all $a\in L$, if $a\in {\cal B}(L)$, then $f_L(a)=Cg_L(a,0)$, and, if $a/\phi \in {\cal B}(L/\phi )$, then $f_{L/\phi }(a/\phi )=Cg_{L/\phi }(a/\phi ,0/\phi )$. The following diagram is commutative:\vspace*{-15pt}

\begin{center}\begin{picture}(140,60)(0,0)
\put(22,35){${\cal B}(L)$}
\put(16,5){${\cal B}(L/\phi )$}
\put(111,35){${\rm FC}(L)$}
\put(104,5){${\rm FC}(L/\phi )$}
\put(7,21){${\cal B}(p_{\phi })$}
\put(33,33){\vector(0,-1){19}}
\put(123,21){${\rm FC}(\phi )$}
\put(121,33){\vector(0,-1){19}}
\put(70,42){$f_L$}
\put(45,38){\vector(1,0){65}}
\put(64,12){$f_{L/\phi }$}
\put(49,8){\vector(1,0){53}}\end{picture}\end{center}\vspace*{-4pt}

Indeed, ${\cal B}(p_{\phi })({\cal B}(L))={\cal B}(L)/\phi \subseteq {\cal B}(L/\phi )$ and, by Lemma \ref{l2.1}, for all $a\in {\cal B}(L)$, the following equalities hold: ${\rm FC}(\phi )(f_L(a))=u_{\phi }(Cg_L(a,0))=(Cg_L(a,0)\vee \phi )/\phi =Cg_{L/\phi }(a/\phi ,0/\phi )=f_{L/\phi }(a/\phi )=f_{L/\phi }(p_{\phi }(a))$. So ${\rm FC}(\phi )\circ f_L=f_{L/\phi }\circ {\cal B}(p_{\phi })$, thus, since $f_L$ and $f_{L/\phi }$ are bijections, it follows that: ${\cal B}(p_{\phi })$ is surjective iff ${\rm FC}(\phi )$ is surjective, which means that: $\phi $ has BLP iff $\phi $ has FCLP.\end{proof}

\begin{remark} Obviously, $L$ has Filt--BLP iff the dual of $L$ has Id--BLP, while, just as in the case of BLP and CBLP, $L$ has FCLP iff the dual of $L$ has FCLP, because the congruences of $L$ coincide with those of its dual.\end{remark}

\begin{corollary}\begin{enumerate}
\item\label{cl1} For any $F\in {\rm Filt}(R)$: $f(F)$ has FCLP iff $f(F)$ has CBLP iff $f(F)$ has BLP iff $\lambda (F)$ has BLP iff $g(\lambda (F))$ has BLP iff $g(\lambda (F))$ has FCLP. 
\item\label{cl2} $R$ has FCLP iff $R$ has CBLP iff $R$ has BLP iff ${\cal L}(R)$ has Filt--BLP iff each congruence in $g({\rm Filt}({\cal L}(R)))$ has BLP iff each congruence in $g({\rm Filt}({\cal L}(R)))$ has FCLP iff ${\rm PFilt}(R)$ has Id--BLP iff each congruence in $h({\rm Id}({\rm PFilt}(R)))$ has BLP iff each congruence in $h({\rm Id}({\rm PFilt}(R)))$ has FCLP.\end{enumerate}\label{cl}\end{corollary}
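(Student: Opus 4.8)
The plan is to prove both parts of the corollary by concatenating equivalences that are already available: the coincidence of FCLP, CBLP and BLP inside residuated lattices (Proposition \ref{plr}), the coincidence of BLP and FCLP inside bounded distributive lattices (Proposition \ref{p0}), the transfer across the reticulation (Lemma \ref{l1}), and the duality between Filt--BLP and Id--BLP recorded in the Remark preceding the corollary. The only additional ingredients are the purely definitional unpackings of what it means for a filter, respectively an ideal, to have a lifting property, expressed through the embeddings $g$ and $h$. No new computation is needed; the argument is a matter of lining up these facts correctly.

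For (\ref{cl1}), I would fix $F\in {\rm Filt}(R)$ and read the asserted chain from left to right. The first three equivalences, that $f(F)$ has FCLP iff it has CBLP iff it has BLP, are exactly Proposition \ref{plr}, (\ref{plr1}); that same statement records that these are in turn equivalent to $F$ having BLP, which by Lemma \ref{l1}, (\ref{l1(1)}), is equivalent to $\lambda (F)$ having BLP in the bounded distributive lattice ${\cal L}(R)$. Next, by the definition of the BLP of a filter of a bounded distributive lattice, $\lambda (F)$ has BLP iff the congruence $g(\lambda (F))\in {\rm Con}({\cal L}(R))$ has BLP. Finally, applying Proposition \ref{p0}, (\ref{p0(1)}), to the lattice ${\cal L}(R)$ and the congruence $g(\lambda (F))$ gives that $g(\lambda (F))$ has BLP iff it has FCLP. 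Concatenating these equivalences yields the entire chain in (\ref{cl1}).

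For (\ref{cl2}), I would argue analogously at the level of the whole algebra. The first two equivalences, $R$ has FCLP iff CBLP iff BLP, are Proposition \ref{plr}, (\ref{plr3}), and $R$ has BLP iff ${\cal L}(R)$ has Filt--BLP is Lemma \ref{l1}, (\ref{l1(2)}). By the definition of Filt--BLP combined with the definition of the BLP of a single filter, ${\cal L}(R)$ has Filt--BLP iff every congruence in $g({\rm Filt}({\cal L}(R)))$ has BLP; applying Proposition \ref{p0}, (\ref{p0(1)}), congruence by congruence then shows this is equivalent to every such congruence having FCLP.

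The remaining equivalences pass from ${\cal L}(R)$ to the dual lattice ${\rm PFilt}(R)$. The key observation is that ${\cal L}(R)=({\rm PFilt}(R),\cap ,\vee ,[0),[1))$ is, by construction, the dual of the bounded distributive lattice ${\rm PFilt}(R)$, so the Remark stating that a lattice has Filt--BLP iff its dual has Id--BLP gives that ${\cal L}(R)$ has Filt--BLP iff ${\rm PFilt}(R)$ has Id--BLP. Unpacking Id--BLP through the definition of the BLP of an ideal yields that ${\rm PFilt}(R)$ has Id--BLP iff every congruence in $h({\rm Id}({\rm PFilt}(R)))$ has BLP, and a final application of Proposition \ref{p0}, (\ref{p0(1)}), to ${\rm PFilt}(R)$ converts BLP into FCLP for these congruences. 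No step is genuinely difficult; the whole proof is careful bookkeeping, and the only point requiring attention is the duality block at the end, where one must recall that filters of ${\cal L}(R)$ are precisely the ideals of ${\rm PFilt}(R)$ and invoke the Filt--BLP/Id--BLP duality of the Remark rather than attempting to re-derive it directly.
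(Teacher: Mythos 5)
Your proof is correct and takes essentially the same route as the paper, which states Corollary \ref{cl} without a separate proof precisely because it is the direct concatenation of Proposition \ref{plr}, Lemma \ref{l1}, the definitional unpacking of the BLP for filters and ideals via $g$ and $h$, Proposition \ref{p0}, and the Filt--BLP/Id--BLP duality remark, applied with ${\cal L}(R)$ taken as the dual of ${\rm PFilt}(R)$ --- exactly the chain you assemble. Your bookkeeping, including the observation that ${\rm PFilt}(R)$ is a bounded distributive lattice so that Proposition \ref{p0} applies to it, is accurate and complete.
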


\begin{remark} To conclude on the above:\begin{itemize}
\item in residuated lattices, BLP, CBLP and FCLP coincide; see, in \cite{ggcm} and \cite{dcggcm}, examples of residuated lattices without BLP, thus without CBLP or FCLP, as well as examples of residuated lattices with BLP, thus with CBLP and FCLP;
\item in bounded distributive lattices, CBLP is always present, while FCLP coincides to the BLP, which implies Filt--BLP and Id--BLP; see, in \cite{blpiasi} and \cite{blpdacs}, examples of bounded distributive lattices without BLP, thus without FCLP, as well as examples of bounded distributive lattices without Filt--BLP and/or Id--BLP; thus CBLP does not imply FCLP, BLP, Filt--BLP or Id--BLP; see, also, in \cite{blpiasi} and \cite{blpdacs}, examples of bounded distributive lattices with BLP, thus with FCLP;
\item so, in residuated lattices and bounded distributive lattices, BLP and FCLP are neither always present, nor always absent.\end{itemize}\label{clcomplps}\end{remark}

\begin{corollary} CBLP does not imply FCLP.\label{compar1}\end{corollary}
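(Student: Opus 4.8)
The plan is to produce a single witnessing algebra rather than a general argument: I would look for a bounded distributive lattice that carries CBLP but fails FCLP, since both of the facts needed to analyse such a lattice are already at hand. The guiding observation is that, within the class of bounded distributive lattices, CBLP and FCLP behave in completely different ways --- the former is automatic, while the latter is equivalent to the Boolean Lifting Property, which can genuinely fail. So the separation reduces to exhibiting a bounded distributive lattice whose BLP fails.

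Concretely, the first step is to recall from Lemma \ref{cblpabvd} that \emph{every} bounded distributive lattice has CBLP, with no hypotheses whatsoever. This immediately supplies the positive half of the separation for any choice of lattice $L$. The second step is to invoke Proposition \ref{p0}, (\ref{p0(2)}), which states that a bounded distributive lattice $L$ has FCLP if and only if it has BLP. Thus, in the distributive--lattice setting, the question of whether FCLP holds reduces entirely to the question of whether BLP holds.

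The final step is then to select a bounded distributive lattice $L$ that does \emph{not} satisfy BLP; such lattices are exhibited in \cite{blpiasi} and \cite{blpdacs}. For this $L$, Proposition \ref{p0}, (\ref{p0(2)}), forces the failure of FCLP, while Lemma \ref{cblpabvd} guarantees that $L$ nonetheless has CBLP. Hence $L$ is an algebra with CBLP and without FCLP, which is exactly what the corollary asserts.

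The argument has essentially no obstacle beyond citing the existence of a BLP--free bounded distributive lattice; all the conceptual work has already been carried out in Lemma \ref{cblpabvd} and Proposition \ref{p0}. The only point meriting a word of care is to confirm that the cited examples are genuinely \emph{bounded} lattices, so that they lie in the scope of both Lemma \ref{cblpabvd} and Proposition \ref{p0}, and that no finite--generation requirement on $\nabla _{A}$ is silently assumed --- but the remark preceding Definition \ref{defcblp} already waives hypothesis (H), so the CBLP conclusion for $L$ is unconditional.
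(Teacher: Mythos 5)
Your proposal is correct and is essentially the paper's own argument: the proof of Corollary \ref{compar1} simply invokes Remark \ref{clcomplps}, which packages exactly your three ingredients --- Lemma \ref{cblpabvd} (all bounded distributive lattices have CBLP), Proposition \ref{p0} (in bounded distributive lattices FCLP coincides with BLP), and the examples from \cite{blpiasi} and \cite{blpdacs} of bounded distributive lattices without BLP. Your added remark about hypothesis (H) being waived is a fine point of care but introduces nothing beyond what the paper already notes.
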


\begin{proof} As pointed out in Remark \ref{clcomplps}, all bounded distributive lattices have the CBLP, but they do not all have the FCLP.\end{proof}

\begin{remark} Regarding the behaviour of the functor ${\cal L}$ with respect to these lifting properties, we conclude, by the above, that:\begin{itemize}
\item ${\cal L}$ reflects the BLP; equivalently, ${\cal L}$ reflects the FCLP;
\item ${\cal L}$ does not reflect the CBLP, as shown by the examples of residuated lattices without BLP, and thus without CBLP;
\item ${\cal L}$ preserves the CBLP, trivially;
\item ${\cal L}$ does not preserve the BLP, or, equivalently, ${\cal L}$ does not preserve the FCLP, as shown by this example of a residuated lattice with BLP, thus with FCLP, whose reticulation does not have the BLP, thus it does not have the FCLP: $R_0=\{0,a,b,c,1\}$, with the lattice structure given by the following Hasse diagram, $\odot =\wedge $ and $\rightarrow $ given by the following table: \vspace*{-10pt}

\begin{center}\begin{tabular}{cc}
\begin{picture}(100,70)(0,0)
\put(48,53){$1$}
\put(48,6){$0$}
\put(52,27){$c$}

\put(33,38){$a$}
\put(62,38){$b$}
\put(50,30){\circle*{3}}
\put(50,50){\circle*{3}}
\put(50,15){\circle*{3}}
\put(40,40){\circle*{3}}
\put(60,40){\circle*{3}}
\put(50,30){\line(1,1){10}}
\put(50,30){\line(-1,1){10}}
\put(50,50){\line(1,-1){10}}
\put(50,50){\line(-1,-1){10}}
\put(50,15){\line(0,1){15}}\end{picture}&
\begin{picture}(100,70)(0,0)
\put(0,37){\begin{tabular}{c|ccccc}
$\rightarrow $ & $0$ & $a$ & $b$ & $c$ & $1$\\ \hline 
$0$ & $1$ & $1$ & $1$ & $1$ & $1$\\ 
$a$ & $0$ & $1$ & $b$ & $b$ & $1$\\ 
$b$ & $0$ & $a$ & $1$ & $a$ & $1$\\ 
$c$ & $0$ & $1$ & $1$ & $1$ & $1$\\ 
$1$ & $0$ & $a$ & $b$ & $c$ & $1$\end{tabular}}\end{picture}\end{tabular}\end{center}\vspace*{-12pt}

$[c)=\{a,b,c,1\}$ is the unique maximal filter of $R_0$, so $R_0$ is a local residuated lattice, thus, according to \cite[Corollary $4.9$]{ggcm}, $R_0$ has BLP, thus $R_0$ has FCLP. Since $\odot =\wedge $, it follows that ${\cal L}(R_0)$ is isomorphic to the underlying bounded lattice of $R_0$ (\cite{eu}, \cite{eu2}), which, according to \cite[Example $2$]{blpiasi}, does not have Id--BLP, thus it does not have BLP, so it does not have FCLP.\end{itemize}\label{clllps}\end{remark}

\section{FCLP versus CBLP, with Examples}
\label{examples}

For the properties related to lattices that we recall in this section, see \cite{bal}, \cite{blyth}, \cite{gratzer}.

We have seen that, in arithmetical algebras, FCLP and CBLP coincide. In Section \ref{vsblp}, we have seen that, in general, they differ (Corollary \ref{compar1}). In this section we shall see that, moreover, in general, FCLP and CBLP are independent of each other. We shall do this by obtaining some properties that are useful in calculations and then providing some examples in lattices. But, first, just to show that the following results are not trivial, let us notice that the lattice structure of the set of congruences of a congruence--distributive algebra does not determine its factor congruences, also by examples in lattices.

First, let us notice that, in lattices, FCLP and CBLP are self--dual, that is a lattice $L$ has FCLP or CBLP iff its dual has FCLP or CBLP, respectively, which can be easily seen, for instance, from Remark \ref{transpcong}.

It is well known that, if $L$ is a finite distributive lattice, then its lattice of congruences is a Boolean algebra, hence ${\cal B}({\rm Con}(L))={\rm Con}(L)$. If $B$ is a Boolean algebra, then $B$ is an arithmetical algebra, thus ${\rm FC}(B)={\cal B}({\rm Con}(B))$. If $B$ is a finite Boolean algebra, then, by the above and/or the well--known fact that, in this case, ${\rm Con}(B)$ is isomorphic to $B$, it follows that ${\rm FC}(B)={\cal B}({\rm Con}(B))={\rm Con}(B)\cong B$. It is also well known that the classes of any congruence of a lattice $L$ are convex sublattices of $L$.

\begin{remark} It is clear that, if $L$ is a lattice, $S$ is a sublattice of $L$ and $\theta \in {\rm Con}(L)$, then $\theta \cap S^2\in {\rm Con}(S)$.\label{congrsl1}\end{remark}

For any $n\in \N ^*$, we shall denote by ${\cal L}_n$ the $n$--element chain. We shall denote by  ${\cal D}$ the diamond and by ${\cal P}$ the pentagon. We shall use the following ad--hoc notation: for any set $M$ and any partition $\pi $ of $M$, we denote by $eq(\pi )$ the equivalence on $M$ that corresponds to $\pi $; if $\pi $ is finite: $\pi =\{M_1,\ldots ,M_n\}$ for some $n\in \N ^*$, then $eq(\pi )$ shall be denoted, simply, by $eq(M_1,\ldots ,M_n)$. For any lattice $L$ with $1$ and any lattice $M$ with $0$, we shall denote by $L\dotplus M$ the ordinal sum between $L$ and $M$ and, for any $\phi \in {\rm Con}(L)$ and any $\psi \in {\rm Con}(M)$, by $\phi \dotplus \psi =eq((L/\phi \setminus c/\phi )\cup \{c/\phi \cup c/\psi \}\cup (M/\psi \setminus c/\psi ))$, where $c$ is the common element of $L$ and $M$ in $L\dotplus M$.

\begin{remark} Let $L$ be a lattice with $1$ and $M$ be a lattice with $0$. Then, by a result in \cite{cblp}:\begin{itemize}
\item the mapping $\phi \dotplus \psi \mapsto \phi \times \psi $ is a bounded lattice isomorphism between ${\rm Con}(L\dotplus M)$ and ${\rm Con}(L\times M)$ and thus a Boolean isomorphism between ${\cal B}({\rm Con}(L\dotplus M))$ and ${\cal B}({\rm Con}(L\times M))$, so ${\rm Con}(L\dotplus M)=\{\phi \dotplus \psi \ |\ \phi \in {\rm Con}(L),\psi \in {\rm Con}(M)\}\cong {\rm Con}(L\times M)\cong {\rm Con}(L)\times {\rm Con}(M)$ and ${\cal B}({\rm Con}(L\dotplus M))=\{\phi \dotplus \psi \ |\ \phi \in {\cal B}({\rm Con}(L)),\psi \in {\cal B}({\rm Con}(M))\}\cong {\cal B}({\rm Con}(L\times M))\cong {\cal B}({\rm Con}(L)\times {\rm Con}(M))\cong {\cal B}({\rm Con}(L))\times {\cal B}({\rm Con}(M))$;
\item $(L\dotplus M)/(\Delta _L\dotplus \nabla _M)\cong L$ and $(L\dotplus M)/(\nabla _L\dotplus \Delta _M)\cong M$.\end{itemize}

However, ${\rm FC}(L\dotplus M)$ is not necessarily isomorphic to ${\rm FC}(L\times M)\cong {\rm FC}(L)\times {\rm FC}(M)$, and ${\rm FC}(L\dotplus M)$ and $\{\phi \dotplus \psi \ |\ \phi \in {\rm FC}(L),\psi \in {\rm FC}(M)\}$ are not necessarily equal, as shown by the example of the bounded lattice $Z={\cal P}\dotplus {\cal L}_2^2$ in Remark \ref{z} below.\label{congrsl2}\end{remark}

\begin{corollary} Let $L$ be a lattice with $1$, $M$ a lattice with $0$ and $K$ a bounded lattice. If $L\dotplus M$ has FCLP, then $L$ and $M$ have FCLP. If $L\dotplus K\dotplus M$ has FCLP, then $L$, $K$ and $M$ have FCLP. The converses of these implications do not hold. The same goes for CBLP instead of FCLP.\label{nicecor}\end{corollary}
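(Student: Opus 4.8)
The plan is to derive the forward implications from the preservation of FCLP under quotients, and to refute the converses by a small explicit computation in an ordinal sum. For the two--summand forward implication I would argue as follows: assuming $L\dotplus M$ has FCLP, Proposition \ref{proposition4.4} gives that every quotient $(L\dotplus M)/\theta $ has FCLP; by Remark \ref{congrsl2} the congruences $\Delta _L\dotplus \nabla _M$ and $\nabla _L\dotplus \Delta _M$ produce quotients isomorphic to $L$ and to $M$ respectively, and since FCLP is invariant under isomorphism (Remark \ref{transpcong}), both $L$ and $M$ have FCLP.

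For the three--summand implication I would invoke the associativity of the ordinal sum, writing $L\dotplus K\dotplus M=L\dotplus (K\dotplus M)$. As $K$ is bounded, $K\dotplus M$ is a lattice with $0$ (the $0$ of $K$), so the two--summand result applies and yields FCLP for $L$ and for $K\dotplus M$; a second application, now with $K$ regarded as a lattice with $1$ and $M$ as a lattice with $0$, gives FCLP for $K$ and for $M$. The CBLP versions of both forward implications are obtained word for word, since Proposition \ref{proposition4.4} and the quotient isomorphisms of Remark \ref{congrsl2} hold for CBLP as well.

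The converses are the real content, and the key point is that a non--trivial ordinal sum is directly indecomposable: its gluing element is a cut--point, whence ${\rm FC}(L\dotplus M)=\{\Delta _{L\dotplus M},\nabla _{L\dotplus M}\}$ (factor congruences correspond to direct--product decompositions, Proposition \ref{proposition3.6}), no matter how large ${\rm FC}(L)$ and ${\rm FC}(M)$ are. To refute the two--summand converse for FCLP I would take $L={\cal L}_2$ and $M={\cal L}_2^2$; both are Boolean algebras and hence have FCLP. The two proper congruences $\mu _1,\mu _2$ of $M$ permute and satisfy $\mu _1\circ \mu _2=\mu _1\vee \mu _2=\nabla _M$ (because $M$ is arithmetical), and I claim that the corresponding congruences $\nabla _L\dotplus \mu _1$ and $\nabla _L\dotplus \mu _2$ of $L\dotplus M$ are proper and compose to $\nabla _{L\dotplus M}$. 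By the FC--normality criterion (Proposition \ref{fclpfcn} together with Remark \ref{fccuneg}), such a composing pair cannot be resolved inside ${\rm FC}(L\dotplus M)=\{\Delta ,\nabla \}$, so $L\dotplus M$ is not FC--normal and therefore lacks FCLP. The three--summand converse is refuted by the same phenomenon, e.g.\ in ${\cal L}_2\dotplus {\cal L}_2\dotplus {\cal L}_2^2$, whose top summand ${\cal L}_2^2$ again contributes an unresolvable composing pair while each summand has FCLP.

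I expect the main obstacle to be exactly this last verification: one must check that passing to the ordinal sum collapses ${\rm FC}$ to $\{\Delta ,\nabla \}$ while the internal direct decomposition of a summand survives as a pair of congruences composing to $\nabla $, i.e.\ that ${\rm FC}(L\dotplus M)$ is genuinely smaller than ${\rm FC}(L\times M)\cong {\rm FC}(L)\times {\rm FC}(M)$ --- the discrepancy already flagged in Remark \ref{congrsl2} and computed in detail for $Z={\cal P}\dotplus {\cal L}_2^2$ in Remark \ref{z}. For CBLP the forward implications hold as above, and the treatment of the CBLP converses parallels the FCLP discussion and is completed through the examples of the following section.
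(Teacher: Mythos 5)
Your forward implications are exactly the paper's proof: Remark \ref{congrsl2} plus Proposition \ref{proposition4.4} (with Remark \ref{transpcong} for isomorphism--invariance), and associativity of $\dotplus $ for the triple sum. For the FCLP converses you take a genuinely different route. The paper's proof points forward to Example \ref{nd}, the non--distributive lattice $X={\cal L}_2^2\dotplus {\cal D}$, whose failure of FCLP is established there by brute--force computation of ${\rm Con}(X)$ and ${\rm FC}(X)$. Your counterexample ${\cal L}_2\dotplus {\cal L}_2^2$ is also correct --- the paper itself records it in the remark immediately following the corollary --- but the paper justifies it indirectly, through Remark \ref{clllps}: this lattice is the reticulation of $R_0$, lacks Id--BLP by a citation to \cite{blpiasi}, hence lacks BLP, hence lacks FCLP by Proposition \ref{p0}. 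Your argument is instead direct and self--contained, and it checks out: in an ordinal sum with both summands non--trivial every element is comparable with the gluing element $c$, so the only complemented elements are the bounds, the lattice is directly indecomposable, and ${\rm FC}(L\dotplus M)=\{\Delta ,\nabla \}$ by Proposition \ref{proposition3.6}; the pair $\nabla _L\dotplus \mu _1=eq(\{0,c,u\},\{v,1\})$, $\nabla _L\dotplus \mu _2=eq(\{0,c,v\},\{u,1\})$ is proper and composes to $\nabla $ in both orders (all four intersections of a class of one with a class of the other are non--empty), and no $\alpha ,\beta \in \{\Delta ,\nabla \}$ with $\alpha \cap \beta =\Delta $ can satisfy $\phi \circ \alpha =\psi \circ \beta =\nabla $ when $\phi ,\psi $ are proper; so FC--normality fails and Proposition \ref{fclpfcn} applies. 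The step you flag as the ``main obstacle'' is thus a routine finite verification. Your triple--sum example ${\cal L}_2\dotplus {\cal L}_2\dotplus {\cal L}_2^2$ works identically. What your route buys is independence from the BLP/reticulation machinery; what the paper's choice of $X$ buys is a single example that simultaneously witnesses CBLP without FCLP.

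The genuine gap is your treatment of the CBLP converses. You assert that it ``parallels the FCLP discussion and is completed through the examples of the following section,'' and both halves of this are wrong. First, the mechanism does not parallel: by Remark \ref{congrsl2}, ${\cal B}({\rm Con}(L\dotplus M))\cong {\cal B}({\rm Con}(L))\times {\cal B}({\rm Con}(M))$ --- unlike ${\rm FC}$, the Boolean center of the congruence lattice does \emph{not} collapse under ordinal sums, and since CBLP is equivalent to B--normality of ${\rm Con}(A)$ (Proposition \ref{cblpbn} and the note preceding it), which for a direct product of bounded distributive lattices holds componentwise, no argument of your shape can possibly refute the CBLP converse. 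Second, your lattice is useless here: ${\cal L}_2\dotplus {\cal L}_2^2$ is bounded distributive, hence has CBLP by Lemma \ref{cblpabvd}; and the examples of the following section never exhibit summands with CBLP whose ordinal sum lacks it ($S$, $R$, $T$, $X$ and $E$ all have CBLP, while the sums without CBLP in Remark \ref{z} all contain ${\cal P}$, which itself lacks CBLP). The paper does not prove the CBLP part of the corollary at all --- it disposes of every CBLP statement with the single sentence citing \cite{cblp} --- so to complete your write--up you must likewise cite \cite{cblp}, and you should be aware that any CBLP counterexample, if one exists, cannot be reached by the congruence--lattice computations your method relies on.
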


\begin{proof} The statements on CBLP are known from \cite{cblp}. The statement on FCLP for $L\dotplus M$ follows from Remark \ref{congrsl2} and Proposition \ref{proposition4.4}. The one on $L\dotplus K\dotplus M$ follows from the previous one and the associativity of $\dotplus $. Example \ref{nd} below contradicts the converse implications, because $X={\cal L}_2^2\dotplus {\cal D}$ in this example does not have FCLP, despite the fact that, according to Example \ref{ex4.14}, both ${\cal L}_2^2$ and ${\cal D}$ have FCLP.\end{proof}

\begin{remark} The converses of the implications in Corollary \ref{nicecor} do not hold in bounded distributive lattices either. Indeed, by Example \ref{ex4.14}, ${\cal L}_2$ and ${\cal L}_2^2$ have FCLP, but, by Remark \ref{clllps}, ${\cal L}_2\dotplus {\cal L}_2^2$ does not have FCLP.\end{remark}

\begin{example} Let us draw the Hasse diagrams of the chains ${\cal L}_2$ and ${\cal L}_3$, the Boolean algebras ${\cal L}_2^2$ and ${\cal L}_2^3$, the bounded distributive lattice ${\cal L}_2\times {\cal L}_3$ and the bounded non--distributive lattices ${\cal D}$ and ${\cal P}$:\vspace*{-10pt}

\begin{center}\begin{tabular}{ccccccc}
\begin{picture}(40,100)(0,0)
\put(20,25){\line(0,1){10}}
\put(20,25){\circle*{3}}
\put(20,35){\circle*{3}}
\put(18,15){$0$}
\put(18,38){$1$}
\put(15,0){${\cal L}_2$}
\end{picture}
&\hspace*{10pt}
\begin{picture}(40,100)(0,0)
\put(20,25){\line(0,1){20}}
\put(20,25){\circle*{3}}
\put(20,35){\circle*{3}}
\put(20,45){\circle*{3}}
\put(18,15){$0$}
\put(24,33){$m$}
\put(18,48){$1$}

\put(15,0){${\cal L}_3$}
\end{picture}
&\hspace*{10pt}
\begin{picture}(40,100)(0,0)
\put(20,25){\line(1,1){10}}
\put(20,25){\line(-1,1){10}}
\put(20,45){\line(1,-1){10}}
\put(20,45){\line(-1,-1){10}}
\put(20,25){\circle*{3}}
\put(3,33){$u$}
\put(33,33){$v$}
\put(10,35){\circle*{3}}

\put(30,35){\circle*{3}}
\put(20,45){\circle*{3}}
\put(18,15){$0$}
\put(18,48){$1$}
\put(15,0){${\cal L}_2^2$}
\end{picture}
&\hspace*{10pt}
\begin{picture}(60,100)(0,0)
\put(30,25){\line(0,1){20}}
\put(10,45){\line(0,1){20}}
\put(50,45){\line(0,1){20}}
\put(30,65){\line(0,1){20}}

\put(30,25){\line(1,1){20}}
\put(30,25){\line(-1,1){20}}
\put(30,45){\line(1,1){20}}
\put(30,45){\line(-1,1){20}}
\put(30,65){\line(1,-1){20}}
\put(30,65){\line(-1,-1){20}}

\put(30,85){\line(1,-1){20}}
\put(30,85){\line(-1,-1){20}}
\put(30,25){\circle*{3}}
\put(30,45){\circle*{3}}
\put(10,45){\circle*{3}}
\put(50,45){\circle*{3}}
\put(10,65){\circle*{3}}
\put(50,65){\circle*{3}}
\put(30,65){\circle*{3}}
\put(30,85){\circle*{3}}
\put(28,15){$0$}
\put(28,88){$1$}
\put(3,41){$a$}
\put(3,65){$x$}
\put(33,41){$b$}
\put(33,65){$y$}
\put(53,41){$c$}
\put(53,65){$z$}
\put(25,0){${\cal L}_2^3$}
\end{picture}
&\hspace*{10pt}
\begin{picture}(40,100)(0,0)
\put(20,25){\line(1,1){20}}
\put(20,25){\line(-1,1){10}}
\put(20,45){\line(1,-1){10}}
\put(30,55){\line(-1,-1){20}}
\put(30,55){\line(1,-1){10}}
\put(20,25){\circle*{3}}
\put(10,35){\circle*{3}}
\put(30,35){\circle*{3}}
\put(20,45){\circle*{3}}
\put(40,45){\circle*{3}}
\put(30,55){\circle*{3}}
\put(2,32){$p$}
\put(33,32){$q$}
\put(13,43){$r$}
\put(43,43){$s$}
\put(18,15){$0$}
\put(28,58){$1$}
\put(9,0){${\cal L}_2\times {\cal L}_3$}
\end{picture}
&\hspace*{10pt}
\begin{picture}(40,100)(0,0)
\put(30,25){\line(0,1){40}}
\put(30,25){\line(1,1){20}}
\put(30,25){\line(-1,1){20}}
\put(30,65){\line(1,-1){20}}
\put(30,65){\line(-1,-1){20}}
\put(30,25){\circle*{3}}
\put(10,45){\circle*{3}}
\put(3,42){$a$}
\put(30,45){\circle*{3}}
\put(33,42){$b$}
\put(50,45){\circle*{3}}
\put(53,42){$c$}
\put(30,65){\circle*{3}}
\put(28,15){$0$}
\put(28,68){$1$}
\put(25,0){${\cal D}$}
\end{picture}
&\hspace*{10pt}
\begin{picture}(40,100)(0,0)
\put(30,25){\line(1,1){10}}
\put(30,25){\line(-1,1){20}}
\put(30,65){\line(1,-1){10}}
\put(30,65){\line(-1,-1){20}}
\put(40,35){\line(0,1){20}}
\put(30,25){\circle*{3}}
\put(10,45){\circle*{3}}
\put(3,43){$x$}
\put(40,35){\circle*{3}}
\put(43,33){$y$}
\put(30,65){\circle*{3}}

\put(40,55){\circle*{3}}
\put(43,53){$z$}
\put(25,0){${\cal P}$}
\put(28,15){$0$}
\put(28,68){$1$}
\end{picture}
\end{tabular}
\end{center}

Let us also consider the following bounded non--distributive lattices:\vspace*{-13pt}

\begin{center}\begin{tabular}{cccc}
\begin{picture}(40,120)(0,0)
\put(30,20){\line(0,1){60}}
\put(30,20){\line(1,1){20}}
\put(30,20){\line(-1,1){20}}
\put(30,60){\line(1,-1){20}}
\put(30,60){\line(-1,-1){20}}
\put(30,20){\circle*{3}}
\put(10,40){\circle*{3}}

\put(30,40){\circle*{3}}
\put(50,40){\circle*{3}}
\put(30,60){\circle*{3}}
\put(30,80){\circle*{3}}
\put(3,38){$a$}
\put(33,38){$b$}
\put(33,60){$x$}
\put(53,38){$c$}
\put(28,10){$0$}
\put(28,84){$1$}
\put(4,-3){$S={\cal D}\dotplus {\cal L}_2$}
\end{picture}
&\hspace*{60pt}
\begin{picture}(40,120)(0,0)
\put(3,38){$a$}
\put(33,38){$b$}
\put(33,60){$x$}
\put(33,78){$y$}
\put(53,38){$c$}
\put(28,10){$0$}
\put(28,104){$1$}
\put(30,20){\line(0,1){80}}
\put(30,20){\line(1,1){20}}
\put(30,20){\line(-1,1){20}}
\put(30,60){\line(1,-1){20}}
\put(30,60){\line(-1,-1){20}}
\put(30,20){\circle*{3}}
\put(10,40){\circle*{3}}
\put(30,40){\circle*{3}}
\put(50,40){\circle*{3}}
\put(30,60){\circle*{3}}
\put(30,80){\circle*{3}}

\put(30,100){\circle*{3}}
\put(4,-3){$R={\cal D}\dotplus {\cal L}_3$}
\end{picture}
&\hspace*{60pt}
\begin{picture}(40,120)(0,0)
\put(30,20){\line(0,1){80}}
\put(30,40){\line(1,1){20}}
\put(30,40){\line(-1,1){20}}
\put(30,80){\line(1,-1){20}}
\put(30,80){\line(-1,-1){20}}
\put(30,40){\circle*{3}}
\put(10,60){\circle*{3}}
\put(30,60){\circle*{3}}
\put(50,60){\circle*{3}}
\put(30,80){\circle*{3}}
\put(30,100){\circle*{3}}
\put(30,20){\circle*{3}}
\put(3,58){$a$}

\put(33,58){$b$}
\put(53,58){$c$}
\put(28,10){$0$}
\put(28,104){$1$}
\put(33,80){$x$}
\put(33,37){$z$}
\put(-7,-3){$T={\cal L}_2\dotplus {\cal D}\dotplus {\cal L}_2$}
\end{picture}
&\hspace*{60pt}
\begin{picture}(40,120)(0,0)
\put(40,20){\line(0,1){60}}
\put(40,20){\line(1,1){30}}
\put(40,20){\line(-1,1){30}}
\put(40,80){\line(1,-1){30}}
\put(40,80){\line(-1,-1){30}}
\put(40,20){\circle*{3}}
\put(40,40){\circle*{3}}
\put(40,60){\circle*{3}}
\put(40,80){\circle*{3}}
\put(10,50){\circle*{3}}
\put(70,50){\circle*{3}}
\put(3,48){$a$}
\put(33,38){$b$}
\put(43,57){$d$}
\put(73,48){$c$}
\put(38,10){$0$}
\put(38,84){$1$}
\put(37,-3){$E$}\end{picture}\end{tabular}\end{center}

Now let us determine all the congruences, as well as the Boolean and the factor congruences of the lattices above. The first five of these are finite distributive lattices, so all their congruences are Boolean; out of these lattices, the ones which are not Boolean algebras, namely ${\cal L}_3$ and ${\cal L}_2\times {\cal L}_3$, have Boolean congruences which are not factor congruences; as for the two finite non--distributive lattices, ${\cal D}$ has only the two congruences which are present in any algebra, and which are factor congruences, while ${\cal P}$ has five congruences, three of which are not Boolean. See in Example \ref{ex} below finite non--distributive lattices with Boolean congruences that are not factor congruences.

For any $n\in \N $, ${\cal L}_2^n$ is a finite Boolean algebra, thus ${\rm FC}({\cal L}_2^n)={\cal B}({\rm Con}({\cal L}_2^n))={\rm Con}({\cal L}_2^n)\cong {\cal L}_2^n$. Hence ${\rm FC}({\cal L}_2)={\cal B}({\rm Con}({\cal L}_2))={\rm Con}({\cal L}_2)=\{\Delta _{{\cal L}_2},\nabla _{{\cal L}_2}\}$ and ${\rm FC}({\cal L}_2^2)={\cal B}({\rm Con}({\cal L}_2^2))={\rm Con}({\cal L}_2^2)=\{\Delta _{{\cal L}_2^2},eq(\{0,u\},\{v,1\}),$\linebreak $eq(\{0,v\},\{u,1\}),\nabla _{{\cal L}_2^2}\}$, where the last equality is straightforward. Let us denote by $\phi =eq(\{0,u\},\{v,1\})$ and by $\theta =eq(\{0,v\},\{u,1\})=\neg \, \phi $. In order to determine the congruences of ${\cal L}_2^3$, we can simply calculate the congruence of ${\cal L}_2^3$ associated to each of its eight filters/ideals, which are all principal; we obtain: ${\rm FC}({\cal L}_2^3)={\cal B}({\rm Con}({\cal L}_2^3))={\rm Con}({\cal L}_2^3)=\{\Delta _{{\cal L}_2^3},eq(\{0,c\},\{a,y\},\{b,z\},\{x,1\}),eq(\{0,b\},\{a,x\},\{c,z\},\{y,1\}),eq(\{0,a\},\{b,x\},\{c,y\},\{z,1\}),$\linebreak $eq(\{0,b,c,z\},\{a,x,y,1\}),eq(\{0,a,c,y\},\{b,x,z,1\}),eq(\{0,a,b,x\},\{c,y,z,1\}),\nabla _{{\cal L}_2^3}\}$.

It is immediate that ${\rm Con}({\cal L}_3)=\{\Delta _{{\cal L}_3},eq(\{0,m\},\{1\}),eq(\{0\},\{m,1\}),\nabla _{{\cal L}_3}\}\cong {\cal L}_2^2$, so it is a Boolean algebra, thus ${\cal B}({\rm Con}({\cal L}_3))={\rm Con}({\cal L}_3)$. If we denote by $\varphi =eq(\{0,m\},\{1\})$ and by $\psi =eq(\{0\},\{m,1\})=\neg \, \varphi $, then we notice that, for instance, $(0,1)\in \psi \circ \varphi $ and $(0,1)\notin \varphi \circ \psi $, thus $\psi \circ \varphi \neq \varphi \circ \psi $, hence $\varphi ,\psi \notin {\rm FC}({\cal L}_3)$, therefore ${\rm FC}({\cal L}_3)=\{\Delta _{{\cal L}_3},\nabla _{{\cal L}_3}\}\cong {\cal L}_2$.\vspace*{-12pt}

\begin{center}\begin{tabular}{cc}
\begin{picture}(40,60)(0,0)
\put(20,25){\line(1,1){10}}
\put(20,25){\line(-1,1){10}}
\put(20,45){\line(1,-1){10}}
\put(20,45){\line(-1,-1){10}}
\put(20,25){\circle*{3}}
\put(1,33){$\phi $}
\put(33,33){$\psi $}
\put(10,35){\circle*{3}}
\put(30,35){\circle*{3}}
\put(20,45){\circle*{3}}
\put(16,15){$\Delta _{{\cal L}_3}$}
\put(16,48){$\nabla _{{\cal L}_3}$}
\put(-29,-2){${\rm Con}({\cal L}_3)={\cal B}({\rm Con}({\cal L}_3))$}
\end{picture}
&\hspace*{50pt}
\begin{picture}(40,60)(0,0)

\put(20,25){\line(0,1){10}}
\put(20,25){\circle*{3}}
\put(20,35){\circle*{3}}
\put(16,15){$\Delta _{{\cal L}_3}$}
\put(16,38){$\nabla _{{\cal L}_3}$}

\put(6,-2){${\rm FC}({\cal L}_3)$}
\end{picture}\end{tabular}\end{center}

By Remark \ref{oremarca}, it follows that ${\rm Con}({\cal L}_2\times {\cal L}_3)=\{\theta _1\times \theta _2\ |\ \theta _1\in {\rm Con}({\cal L}_2),\theta _2\in {\rm Con}({\cal L}_3)\}=\{\Delta _{{\cal L}_2\times {\cal L}_3}=\Delta _{{\cal L}_2}\times \Delta _{{\cal L}_3},\Delta _{{\cal L}_2}\times \varphi ,\Delta _{{\cal L}_2}\times \psi ,\Delta _{{\cal L}_2}\times \nabla _{{\cal L}_3},\nabla _{{\cal L}_2}\times \Delta _{{\cal L}_3},\nabla _{{\cal L}_2}\times \varphi ,\nabla _{{\cal L}_2}\times \psi ,\nabla _{{\cal L}_2\times {\cal L}_3}=\nabla _{{\cal L}_2}\times \nabla _{{\cal L}_3}\}\cong {\cal L}_2^3$, because, if we denote by $\lambda _0=\Delta _{{\cal L}_2}\times \varphi $, $\lambda _1=\Delta _{{\cal L}_2}\times \psi $, $\lambda =\Delta _{{\cal L}_2}\times \nabla _{{\cal L}_3}$, $\mu =\nabla _{{\cal L}_2}\times \Delta _{{\cal L}_3}$, $\mu _0=\nabla _{{\cal L}_2}\times \varphi $, and $\mu _1=\nabla _{{\cal L}_2}\times \psi $, then we notice that the lattice structure of ${\rm Con}({\cal L}_2\times {\cal L}_3)$ is the following:\vspace*{-5pt}

\begin{center}\begin{tabular}{cc}
\begin{picture}(60,100)(0,0)
\put(30,25){\line(0,1){20}}
\put(10,45){\line(0,1){20}}
\put(50,45){\line(0,1){20}}
\put(30,65){\line(0,1){20}}
\put(30,25){\line(1,1){20}}
\put(30,25){\line(-1,1){20}}
\put(30,45){\line(1,1){20}}
\put(30,45){\line(-1,1){20}}
\put(30,65){\line(1,-1){20}}
\put(30,65){\line(-1,-1){20}}
\put(30,85){\line(1,-1){20}}

\put(30,85){\line(-1,-1){20}}
\put(30,25){\circle*{3}}
\put(30,45){\circle*{3}}
\put(10,45){\circle*{3}}
\put(50,45){\circle*{3}}
\put(10,65){\circle*{3}}
\put(50,65){\circle*{3}}
\put(30,65){\circle*{3}}
\put(30,85){\circle*{3}}
\put(14,16){$\Delta _{{\cal L}_2\times {\cal L}_3}$}
\put(14,91){$\nabla _{{\cal L}_2\times {\cal L}_3}$}
\put(-2,43){$\lambda _0$}
\put(-2,65){$\mu _0$}
\put(33,43){$\mu $}
\put(33,65){$\lambda $}
\put(53,43){$\lambda _1$}
\put(53,65){$\mu _1$}
\put(-45,-2){${\rm Con}({\cal L}_2\times {\cal L}_3)={\cal B}({\rm Con}({\cal L}_2\times {\cal L}_3))$}\end{picture}
&\hspace*{70pt}

\begin{picture}(60,100)(0,0)
\put(30,25){\line(1,1){20}}

\put(30,25){\line(-1,1){20}}
\put(30,65){\line(1,-1){20}}
\put(30,65){\line(-1,-1){20}}
\put(30,25){\circle*{3}}
\put(10,45){\circle*{3}}
\put(50,45){\circle*{3}}
\put(30,65){\circle*{3}}
\put(14,16){$\Delta _{{\cal L}_2\times {\cal L}_3}$}
\put(14,71){$\nabla _{{\cal L}_2\times {\cal L}_3}$}
\put(2,43){$\lambda $}
\put(53,43){$\mu $}
\put(3,-2){${\rm FC}({\cal L}_2\times {\cal L}_3)$}
\end{picture}\end{tabular}\end{center}

Hence ${\cal B}({\rm Con}({\cal L}_2\times {\cal L}_3))={\rm Con}({\cal L}_2\times {\cal L}_3)\cong {\cal L}_2^3$. It is easy to see that: $\lambda _0=eq(\{0,q\},\{p,r\},\{s\},\{1\})$, $\lambda _1=eq(\{0\},\{p\},\{q,s\},\{r,1\})$, $\lambda =eq(\{0,q,s\},\{p,r,1\})$, $\mu =eq(\{0,p\},\{q,r\},\{s,1\})$, $\mu _0=eq(\{0,p,q,r\},\{s,1\})$ and $\mu _1=eq(\{0,p\},\{q,r,s,1\})$. By Lemma \ref{lemma3.3}, ${\rm FC}({\cal L}_2\times {\cal L}_3)=\{\theta _1\times \theta _2\ |\ \theta _1\in {\rm FC}({\cal L}_2),\theta _2\in {\rm FC}({\cal L}_3)\}=\{\Delta _{{\cal L}_2}\times \Delta _{{\cal L}_3},\Delta _{{\cal L}_2}\times \nabla _{{\cal L}_3},\nabla _{{\cal L}_2}\times \Delta _{{\cal L}_3},\nabla _{{\cal L}_2}\times \nabla _{{\cal L}_3}\}=\{\Delta _{{\cal L}_2\times {\cal L}_3},\lambda ,\mu ,\nabla _{{\cal L}_2\times {\cal L}_3}\}\cong {\cal L}_2^2$.

It is immediate that ${\rm Con}({\cal D})=\{\Delta _{\cal D},\nabla _{\cal D}\}$, thus ${\rm FC}({\cal D})={\cal B}({\rm Con}({\cal D}))={\rm Con}({\cal D})=\{\Delta _{\cal D},\nabla _{\cal D}\}\cong {\cal L}_2$.

It is easy to notice that ${\rm Con}({\cal P})=\{\Delta _{\cal P},\alpha ,\beta ,\gamma ,\nabla _{\cal P}\}$, where $\alpha =eq(\{0,y,z\},\{x,1\})$, $\beta =eq(\{0,x\},\{y,z,1\})$ and $\gamma =eq(\{0\},\{x\},\{y,z\},\{1\})$, and thus the lattice structure of ${\rm Con}({\cal P})$ is the following:\vspace*{-5pt}

\begin{center}\begin{tabular}{cc}
\begin{picture}(40,80)(0,0)
\put(20,45){\line(1,1){10}}
\put(20,45){\line(-1,1){10}}
\put(20,65){\line(1,-1){10}}
\put(20,65){\line(-1,-1){10}}
\put(20,45){\line(0,-1){15}}
\put(16,20){$\Delta _{\cal P}$}
\put(16,68){$\nabla _{\cal P}$}
\put(20,30){\circle*{3}}
\put(23,40){$\gamma $}
\put(2,53){$\alpha $}
\put(32,52){$\beta $}
\put(20,45){\circle*{3}}
\put(10,55){\circle*{3}}
\put(30,55){\circle*{3}}
\put(20,65){\circle*{3}}
\put(4,0){${\rm Con}({\cal P})$}
\end{picture}
&\hspace*{70pt}
\begin{picture}(40,80)(0,0)
\put(20,30){\line(0,1){15}}
\put(20,30){\circle*{3}}
\put(20,45){\circle*{3}}
\put(16,20){$\Delta _{\cal P}$}
\put(16,48){$\nabla _{\cal P}$}
\put(-23,0){${\cal B}({\rm Con}({\cal P}))={\rm FC}({\cal P})$}
\end{picture}\end{tabular}\end{center}\vspace*{-3pt}

Hence ${\cal B}({\rm Con}({\cal P}))=\{\Delta _{\cal P},\nabla _{\cal P}\}$, and thus ${\rm FC}({\cal P})={\cal B}({\rm Con}({\cal P}))=\{\Delta _{\cal P},\nabla _{\cal P}\}\cong {\cal L}_2$.

The following calculations are straightforward, where we are determining the congruences by using Remarks \ref{congrsl2} and \ref{congrsl1} (the latter for obtaining ${\rm Con}(E)$), and the structure of ${\rm Con}({\cal D})$, obtained in Example \ref{ex}.\vspace*{-7pt}

\begin{center}\begin{tabular}{cccc}
\begin{picture}(60,100)(0,0)
\put(30,25){\line(1,1){20}}
\put(30,25){\line(-1,1){20}}
\put(30,65){\line(1,-1){20}}
\put(30,65){\line(-1,-1){20}}
\put(30,25){\circle*{3}}
\put(10,45){\circle*{3}}
\put(50,45){\circle*{3}}
\put(30,65){\circle*{3}}
\put(26,15){$\Delta _S$}
\put(26,68){$\nabla _S$}
\put(-1,42){$\sigma _1$}
\put(53,42){$\sigma _2$}
\put(15,-2){${\rm Con}(S)$}\end{picture}
&\hspace*{40pt}

\begin{picture}(60,100)(0,0)
\put(30,25){\line(0,1){20}}
\put(10,45){\line(0,1){20}}
\put(50,45){\line(0,1){20}}
\put(30,65){\line(0,1){20}}
\put(30,25){\line(1,1){20}}
\put(30,25){\line(-1,1){20}}
\put(30,45){\line(1,1){20}}
\put(30,45){\line(-1,1){20}}
\put(30,65){\line(1,-1){20}}
\put(30,65){\line(-1,-1){20}}
\put(30,85){\line(1,-1){20}}
\put(30,85){\line(-1,-1){20}}
\put(30,25){\circle*{3}}
\put(30,45){\circle*{3}}

\put(10,45){\circle*{3}}
\put(50,45){\circle*{3}}
\put(10,65){\circle*{3}}
\put(50,65){\circle*{3}}
\put(30,65){\circle*{3}}
\put(30,85){\circle*{3}}
\put(26,15){$\Delta _R$}
\put(26,88){$\nabla _R$}
\put(-1,43){$\rho _5$}
\put(-1,65){$\rho _1$}

\put(33,43){$\rho _3$}
\put(33,65){$\rho _4$}
\put(53,43){$\rho _6$}
\put(53,65){$\rho _2$}
\put(15,-2){${\rm Con}(R)$}\end{picture}
&\hspace*{40pt}
\begin{picture}(60,100)(0,0)
\put(30,25){\line(0,1){20}}
\put(10,45){\line(0,1){20}}
\put(50,45){\line(0,1){20}}
\put(30,65){\line(0,1){20}}
\put(30,25){\line(1,1){20}}
\put(30,25){\line(-1,1){20}}
\put(30,45){\line(1,1){20}}
\put(30,45){\line(-1,1){20}}
\put(30,65){\line(1,-1){20}}
\put(30,65){\line(-1,-1){20}}
\put(30,85){\line(1,-1){20}}
\put(30,85){\line(-1,-1){20}}
\put(30,25){\circle*{3}}
\put(30,45){\circle*{3}}
\put(10,45){\circle*{3}}
\put(50,45){\circle*{3}}
\put(10,65){\circle*{3}}
\put(50,65){\circle*{3}}
\put(30,65){\circle*{3}}
\put(30,85){\circle*{3}}
\put(26,15){$\Delta _T$}
\put(26,88){$\nabla _T$}
\put(-1,43){$\tau _5$}
\put(-1,65){$\tau _1$}
\put(33,43){$\tau _3$}
\put(33,65){$\tau _4$}
\put(53,43){$\tau _6$}
\put(53,65){$\tau _2$}
\put(15,-2){${\rm Con}(T)$}\end{picture}
&\hspace*{40pt}
\begin{picture}(40,100)(0,0)

\put(20,25){\line(0,1){40}}
\put(20,25){\circle*{3}}
\put(20,45){\circle*{3}}
\put(20,65){\circle*{3}}

\put(16,15){$\Delta _E$}
\put(23,43){$\varepsilon $}
\put(16,68){$\nabla _E$}
\put(6,-2){${\rm Con}(E)$}
\end{picture}
\end{tabular}\end{center}

${\rm Con}(S)=\{\Delta _S,\sigma _1,\sigma _2,\nabla _S\}$, where $\sigma _1=eq(\{0\},\{a\},\{b\},\{c\},\{x,1\})$ and $\sigma _2=eq(\{0,a,b,c,x\},\{1\})$, hence ${\cal B}({\rm Con}(S))={\rm Con}(S)\cong {\cal L}_2^2$, in which $\sigma _2=\neg \, \sigma _1$. Since $(1,b)\in \sigma _2\circ \sigma _1$, but $(1,b)\notin \sigma _1\circ \sigma _2$, it follows that $\sigma _1\circ \sigma _2\neq \sigma _2\circ \sigma _1$, thus $\sigma _1,\sigma _2\notin {\rm FC}(S)$, so ${\rm FC}(S)=\{\Delta _S,\nabla _S\}\cong {\cal L}_2$.

${\rm Con}(R)=\{\Delta _R,\rho _1,\rho _2,\rho _3,\rho _4,\rho _5,\rho _6,\nabla _R\}$, where $\rho _1=eq(\{0,a,b,c,x\},\{y,1\})$, $\rho _2=eq(\{0,a,b,c,x,y\},\{1\})$, $\rho _3=eq(\{0,a,b,c,x\},\{y\},\{1\})$, $\rho _4=eq(\{0\},\{a\},\{b\},\{c\}\},\{x,y,1\})$, $\rho _5=eq(\{0\},\{a\},\{b\},\{c\}\},\{x\},\{y,1\})$ and $\rho _6=eq(\{0\},\{a\},\{b\},\{c\}\},\{x,y\},\{1\})$ and with the lattice structure represented above, hence ${\cal B}({\rm Con}(R))={\rm Con}(R)\cong {\cal L}_2^3$, in which $\neg \, \rho _1=\rho _6$, $\neg \, \rho _2=\rho _5$ and $\neg \, \rho _3=\rho _4$. Now let us notice that $(y,b)\in \rho _1\circ \rho _6$, but $(y,b)\notin \rho _6\circ \rho _1$, $(1,b)\in \rho _2\circ \rho _5$, but $(1,b)\notin \rho _5\circ \rho _2$, and $(1,b)\in \rho _3\circ \rho _4$, but $(1,b)\notin \rho _4\circ \rho _3$, hence $\rho _1\circ \rho _6\neq \rho _6\circ \rho _1$, $\rho _2\circ \rho _5\neq \rho _5\circ \rho _2$ and $\rho _3\circ \rho _4\neq \rho _4\circ \rho _3$, therefore ${\rm FC}(R)=\{\Delta _R,\nabla _R\}\cong {\cal L}_2$.

${\rm Con}(T)=\{\Delta _T,\tau _1,\tau _2,\tau _3,\tau _4,\tau _5,\tau _6,\nabla _T\}$, where $\tau _1=eq(\{0,z,a,b,c,x\},\{1\})$, $\tau _2=eq(\{0\},\{z,a,b,c,x,1\})$, $\tau _3=eq(\{0\},\{z,a,b,c,x\},\{1\})$, $\tau _4=eq(\{0,z\},\{a\},\{b\},\{c\}\},\{x,1\})$, $\tau _5=eq(\{0,z\},\{a\},\{b\},\{c\}\},\{x\},\{1\})$ and $\tau _6=eq(\{0\},\{z\},\{a\},\{b\},\{c\}\},\{x,1\})$ and with the Hasse diagram above, hence ${\cal B}({\rm Con}(T))={\rm Con}(T)\cong {\cal L}_2^3$, in which $\neg \, \tau _1=\tau _6$, $\neg \, \tau _2=\tau _5$ and $\neg \, \tau _3=\tau _4$. Now we may notice that $(1,0)\in \tau _1\circ \tau _6$, but $(1,0)\notin \tau _6\circ \tau _1$, $(1,0)\in \tau _5\circ \tau _2$, but $(1,0)\notin \tau _2\circ \tau _5$, and $(1,b)\in \tau _3\circ \tau _4$, but $(1,b)\notin \tau _4\circ \tau _3$, hence $\tau _1\circ \tau _6\neq \tau _6\circ \tau _1$, $\tau _2\circ \tau _5\neq \tau _5\circ \tau _2$ and $\tau _3\circ \tau _4\neq \tau _4\circ \tau _3$, therefore ${\rm FC}(T)=\{\Delta _T,\nabla _T\}\cong {\cal L}_2$.

${\rm Con}(E)=\{\Delta _E,\varepsilon ,\nabla _E\}\cong {\cal L}_3$, where $\varepsilon =eq(\{0\},\{a\},\{b,d\},\{c\},\{1\})$, hence ${\cal B}({\rm Con}(E))=\{\Delta _E,\nabla _E\}\cong {\cal L}_2$, thus ${\rm FC}(E)={\cal B}({\rm Con}(E))=\{\Delta _E,\nabla _E\}\cong {\cal L}_2$.

The examples above, together with Remark \ref{oremarca} and Lemma \ref{lemma3.3}, provide us with many more meaningful examples: for instance, the finite non--distributive lattice $R^2$ has Boolean congruences that are not factor congruences, and ${\rm FC}(R^2)\supsetneq \{\Delta _{R^2},\nabla _{R^2}\}$, because, by the above, ${\rm Con}(R^2)={\cal B}({\rm Con}(R^2))\cong {\cal L}_2^6$ and ${\rm FC}(R^2)\cong {\cal L}_2^2$; an example of a finite distributive lattice with these properties is ${\cal L}_2\times {\cal L}_3$ (see Example \ref{ex}). Another finite non--distributive lattice with these properties, but which has, furthermore, congruences which are not Boolean is, for instance, $T\times E$, which has ${\rm Con}(T\times E)\cong {\cal L}_2^3\times {\cal L}_3$, ${\cal B}({\rm Con}(T\times E))\cong {\cal L}_2^4$ and ${\rm FC}(T\times E)\cong {\cal L}_2^2$.\label{ex}\end{example}

\begin{remark} Let $A$ and $B$ be congruence--distributive algebras. Then:\begin{enumerate}
\item ${\rm Con}(A)\cong {\rm Con}(B)\ \Rightarrow \ {\cal B}({\rm Con}(A))\cong {\cal B}({\rm Con}(B))$;
\item ${\cal B}({\rm Con}(A))\cong {\cal B}({\rm Con}(B))\ \nRightarrow \ {\rm FC}(A)\cong {\rm FC}(B)$;
\item ${\rm Con}(A)\cong {\rm Con}(B)\ \nRightarrow \ {\rm FC}(A)\cong {\rm FC}(B)$;
\item ${\rm Con}(A)={\cal B}({\rm Con}(A))\cong {\rm Con}(B)={\cal B}({\rm Con}(B))\ \nRightarrow \ {\rm FC}(A)\cong {\rm FC}(B)$.\end{enumerate}

Indeed, the first statement is trivial, while Example \ref{ex} provides us with many counter--examples for the other implications; for instance, ${\rm Con}({\cal L}_2^3)={\cal B}({\rm Con}({\cal L}_2^3))\cong {\rm Con}({\cal L}_2\times {\cal L}_3)={\cal B}({\rm Con}({\cal L}_2\times {\cal L}_3))\cong {\rm Con}(R)={\cal B}({\rm Con}(R))\cong {\cal L}_2^3$, while ${\rm FC}({\cal L}_2^3)\cong {\cal L}_2^3$, ${\rm FC}({\cal L}_2\times {\cal L}_3)\cong {\cal L}_2^2$ and ${\rm FC}(R)\cong {\cal L}_2$.\end{remark}

Throughout the rest of this section, $A$ shall be a congruence--distributive algebra and $\theta \in {\rm Con}(A)$, unless mentioned otherwise. The properties on CBLP in the following remarks are known from \cite{cblp}, but we are including them in these results for the sake of completeness. We shall only prove the statements on FCLP; note that those on CBLP are easily derivable in the same manner as the ones on FCLP.

\begin{remark}\begin{itemize}
\item $\Delta _{A}$ and $\nabla _{A}$ have CBLP and FCLP. Indeed, $p_{\Delta _{A}}:A\rightarrow A/\Delta _{A}$ is an isomorphism, hence, according to Remark \ref{transpcong}, ${\rm FC}(A/\Delta _{A})=\{p_{\Delta _{A}}(\alpha )\ |\ \alpha \in {\rm FC}(A)\}=\{\alpha /\Delta _{A}\ |\ \alpha \in {\rm FC}(A)\}$, and, for all $\alpha \in {\rm FC}(A)$, ${\rm FC}(\Delta _{A})(\alpha )=u_{\Delta _{A}}(\alpha )=(\alpha \vee \Delta _{A})/\Delta _{A}=\alpha /\Delta _{A}$, thus ${\rm FC}(\Delta _{A})$ is surjective, that is $\Delta _{A}$ has FCLP. ${\rm Con}(A/\nabla _{A})={\cal B}({\rm Con}(A/\nabla _{A}))={\rm FC}(A/\nabla _{A})=\{\nabla _{A}/\nabla _{A}\}=\{\Delta _{(A/\nabla _{A})}\}=\{\nabla _{(A/\nabla _{A})}\}\cong {\cal L}_1$, thus ${\rm FC}(\nabla _{A}):{\rm FC}(A)\rightarrow {\rm FC}(A/\nabla _{A})$ is clearly surjective, that is $\nabla _{A}$ has FCLP.
\item By the previous statement, if ${\rm Con}(A)=\{\Delta _{A},\nabla _{A}\}$, then $A$ has CBLP and FCLP. Consequently, the trivial algebra has CBLP and FCLP.
\item If ${\rm FC}(A/\theta )=\{\Delta _{A/\theta },\nabla _{A/\theta}\}$ ($\cong {\cal L}_1$ or $\cong {\cal L}_2$), then $\theta $ has FCLP, because in this case the Boolean morphism ${\rm FC}(\theta ):{\rm FC}(A)\rightarrow {\rm FC}(A/\theta )$ is clearly surjective. This is a generalization of the case $\theta =\nabla _A$.
\item If ${\cal B}({\rm Con}(A/\theta ))=\{\Delta _{A/\theta },\nabla _{A/\theta}\}$, then $\theta $ has CBLP and FCLP, because in this case the Boolean morphism ${\cal B}(u_{\theta })$ is clearly surjective, and we also have ${\rm FC}(A/\theta )=\{\Delta _{A/\theta },\nabla _{A/\theta}\}$, so we can apply the previous statement.
\item If ${\rm FC}(A)=\{\Delta _{A},\nabla _{A}\}$, then: $\theta $ has FCLP iff ${\rm FC}(A/\theta )=\{\Delta _{A/\theta },\nabla _{A/\theta}\}$, because ${\rm FC}(\theta )(\{\Delta _{A},\nabla _{A}\})=\{(\Delta _{A}\vee \theta )/\theta ,(\nabla _{A}\vee \theta )/\theta \}=\{\theta /\theta ,\nabla _{A}/\theta \}=\{\Delta _{A/\theta },\nabla _{A/\theta}\}\subseteq {\rm FC}(A/\theta )$.
\item If ${\cal B}({\rm Con}(A))=\{\Delta _{A},\nabla _{A}\}$, then we also have ${\rm FC}(A)=\{\Delta _{A},\nabla _{A}\}$, so, just as above: $\theta $ has CBLP iff ${\cal B}({\rm Con}(A/\theta ))=\{\Delta _{A/\theta },\nabla _{A/\theta}\}$, and $\theta $ has FCLP iff ${\rm FC}(A/\theta )=\{\Delta _{A/\theta },\nabla _{A/\theta}\}$, so, if $\theta $ has CBLP, then $\theta $ has FCLP.\end{itemize}\label{r4.11}\end{remark}

\begin{remark}\begin{itemize}
\item If ${\cal B}({\rm Con}(A))={\rm FC}(A)$ and $\theta $ has CBLP, then: $\theta $ has FCLP and ${\cal B}({\rm Con}(A/\theta ))={\rm FC}(A/\theta )$. Indeed, if ${\cal B}({\rm Con}(A))={\rm FC}(A)$, then ${\rm FC}(\theta )=u_{\theta }\mid _{\textstyle {\rm FC}(A)}=u_{\theta }\mid _{\textstyle {\cal B}({\rm Con}(A))}={\cal B}(u_{\theta })$, and ${\rm FC}(A/\theta )\subseteq {\cal B}({\rm Con}(A/\theta ))$, thus, if $\theta $ has CBLP, so that ${\cal B}(u_{\theta })$ is surjective, then ${\cal B}({\rm Con}(A/\theta ))={\cal B}(u_{\theta })({\cal B}({\rm Con}(A)))={\rm FC}(\theta )({\rm FC}(A))\subseteq {\rm FC}(A/\theta )\subseteq {\cal B}({\rm Con}(A/\theta ))$, hence ${\rm FC}(\theta )({\rm FC}(A))={\rm FC}(A/\theta )$, that is ${\rm FC}(\theta )$ is surjective, which means that $\theta $ has FCLP.
\item If ${\cal B}({\rm Con}(A))={\rm FC}(A)$ and ${\cal B}({\rm Con}(A/\theta ))={\rm FC}(A/\theta )$, then: $\theta $ has CBLP iff $\theta $ has FCLP. Indeed, if ${\cal B}({\rm Con}(A))={\rm FC}(A)$, then, as above, ${\cal B}(u_{\theta })={\rm FC}(\theta )$, hence ${\cal B}(u_{\theta })({\cal B}({\rm Con}(A)))={\rm FC}(\theta )({\rm FC}(A))$, so, if, moreover, ${\cal B}({\rm Con}(A/\theta ))={\rm FC}(A/\theta )$, then we have the following equivalence: ${\cal B}(u_{\theta })({\cal B}({\rm Con}(A)))={\cal B}({\rm Con}(A/\theta ))$ iff ${\rm FC}(\theta )({\rm FC}(A))={\rm FC}(A/\theta )$, which means that: ${\cal B}(u_{\theta })$ is surjective iff ${\rm FC}(\theta )$ is surjective, that is: $\theta $ has CBLP iff $\theta $ has FCLP. 
\item If ${\cal B}({\rm Con}(A))={\rm FC}(A)$ and $A$ has CBLP, then: $A$ has FCLP and ${\cal B}({\rm Con}(A/\phi ))={\rm FC}(A/\phi )$ for all $\phi \in {\rm Con}(A)$. This follows from the first statement in this remark.
\item If ${\cal B}({\rm Con}(A/\phi ))={\rm FC}(A/\phi )$ for all $\phi \in {\rm Con}(A)$, then: $A$ has CBLP iff $A$ has FCLP. This follows from the second statement in this remark and the fact that $A/\Delta _A\cong A$, hence, by Remark \ref{transpcong}, ${\cal B}({\rm Con}(A/\Delta _A))={\rm FC}(A/\Delta _A)$ iff ${\cal B}({\rm Con}(A))={\rm FC}(A)$.
\item If ${\cal B}({\rm Con}(A))={\rm Con}(A)$, then $A$ has CBLP and, for all $\phi \in {\rm Con}(A)$, ${\cal B}({\rm Con}(A/\phi ))={\rm Con}(A/\phi )$. This is known from \cite{cblp}, but also follows easily from the fact that, in this case, for all $\phi \in {\rm Con}(A)$, ${\cal B}(u_{\theta })=u_{\theta }$, which is surjective, according to Remark \ref{4***}.
\item If ${\rm FC}(A)={\rm Con}(A)$, then $A$ has CBLP and FCLP and, for all $\phi \in {\rm Con}(A)$, ${\rm FC}(A/\phi )={\cal B}({\rm Con}(A/\phi ))={\rm Con}(A/\phi )$. Indeed, if ${\rm FC}(A)={\rm Con}(A)$, then, since ${\rm FC}(A)\subseteq {\cal B}({\rm Con}(A))\subseteq {\rm Con}(A)$, it follows that ${\rm FC}(A)={\cal B}({\rm Con}(A))={\rm Con}(A)$, hence, by the previous statement and the first statement in this remark, $A$ has CBLP, therefore $A$ has FCLP, and, for all $\phi \in {\rm Con}(A)$, ${\rm FC}(A/\phi )={\cal B}({\rm Con}(A/\phi ))={\rm Con}(A/\phi )$.
\item If $[\theta )\subseteq {\cal B}({\rm Con}(A))$, then each $\phi \in [\theta )$ has CBLP and fulfills ${\cal B}({\rm Con}(A/\phi ))={\rm Con}(A/\phi )$. This is known from \cite{cblp}, but can also be derived just as the part on FCLP in the next statement.
\item If $[\theta )\subseteq {\rm FC}(A)$, then each $\phi \in [\theta )$ has CBLP and FCLP and fulfills ${\rm FC}(A/\phi )={\cal B}({\rm Con}(A/\phi ))={\rm Con}(A/\phi )$. Indeed, if $[\theta )\subseteq {\rm FC}(A)\subseteq {\cal B}({\rm Con}(A))$ and $\phi \in [\theta )$, then, by the previous statement, $\phi $ has CBLP and ${\cal B}({\rm Con}(A/\phi ))={\rm Con}(A/\phi )$. Furthermore, we have the following: for each $\gamma \in {\rm FC}(A/\phi )\subseteq {\rm Con}(A/\phi )$, there exists an $\alpha \in [\phi )\subseteq [\theta )\subseteq {\rm FC}(A)$ such that $\gamma =\alpha /\phi =(\alpha \vee \phi )/\phi =u_{\phi }(\alpha )={\rm FC}(\phi )(\alpha )$, thus ${\rm FC}(\phi )$ is surjective, that is $\phi $ has FCLP and, furthermore, ${\rm FC}(A/\phi )\subseteq {\rm Con}(A/\phi )\subseteq {\rm FC}(\phi )( {\rm FC}(A))={\rm FC}(A/\phi )$, hence ${\rm FC}(A/\phi )={\rm Con}(A/\phi )$, thus ${\rm FC}(A/\phi )={\cal B}({\rm Con}(A/\phi ))={\rm Con}(A/\phi )$.\end{itemize}\label{r4.12}\end{remark}

\begin{example} Let us determine, for the lattices in Example \ref{ex}, as well as each of their congruences, whether they have CBLP or FCLP. We shall use the calculations in Example \ref{ex} and the first statement in Remark \ref{r4.11}.

${\cal L}_2$, ${\cal L}_3$, ${\cal L}_2^2$, ${\cal L}_2^3$ and ${\cal L}_2\times {\cal L}_3$ are bounded distributive lattices, hence they have CBLP by Lemma \ref{cblpabvd}. ${\cal L}_2$, ${\cal L}_2^2$ and ${\cal L}_2^3$ are Boolean algebras, hence they have FCLP by Proposition \ref{fclpabvd}.

${\cal L}_3/\phi \cong {\cal L}_3/\psi \cong {\cal L}_2$, which has ${\rm FC}({\cal L}_2)={\cal B}({\rm Con}({\cal L}_2))={\rm Con}({\cal L}_2)\cong {\cal L}_2$, thus, by Remark \ref{transpcong}: ${\rm FC}({\cal L}_3/\phi )={\cal B}({\rm Con}({\cal L}_3/\phi ))={\rm Con}({\cal L}_3/\phi )\cong {\rm FC}({\cal L}_3/\psi )={\cal B}({\rm Con}({\cal L}_3/\psi ))={\rm Con}({\cal L}_3/\psi )\cong {\cal L}_2$, therefore ${\rm FC}({\cal L}_3/\phi )=\{\Delta _{{\cal L}_3/\phi },\nabla _{{\cal L}_3/\phi }\}$ and ${\rm FC}({\cal L}_3/\psi )=\{\Delta _{{\cal L}_3/\psi },\nabla _{{\cal L}_3/\psi }\}$, hence $\phi $ and $\psi $ have FCLP by Remark \ref{r4.11}. Therefore ${\cal L}_3$ has FCLP. So ${\cal L}_2$ and ${\cal L}_3$ have FCLP, hence ${\cal L}_2\times {\cal L}_3$ has FCLP by Proposition \ref{proposition4.5}.

${\rm Con}({\cal D})=\{\Delta _{\cal D},\nabla _{\cal D}\}$, thus ${\cal D}$ has CBLP and FCLP by Remark \ref{r4.11}. ${\cal P}/\alpha \cong {\cal P}/\beta \cong {\cal L}_2$, hence, just as above, it follows that $\alpha $ and $\beta $ have CBLP and FCLP. ${\cal P}/\alpha \cong {\cal L}_2^2$, thus, by Remark \ref{transpcong}, ${\rm FC}({\cal P}/\gamma )={\cal B}({\rm Con}({\cal P}/\gamma ))={\rm Con}({\cal P}/\gamma )\cong {\cal L}_2^2$, therefore ${\cal B}(u_{\gamma }):{\cal B}({\rm Con}({\cal P}))\cong {\cal L}_2\rightarrow {\cal B}({\rm Con}({\cal P}/\gamma ))\cong {\cal L}_2^2$ and ${\rm FC}(\gamma ):{\rm FC}({\cal P})\cong {\cal L}_2\rightarrow {\rm FC}({\cal P}/\gamma )\cong {\cal L}_2^2$, hence neither of these Boolean morphisms is surjective, thus $\gamma $ has neither CBLP, nor FCLP. Therefore ${\cal P}$ has neither CBLP, nor FCLP. The fact that ${\cal D}$ has CBLP, while ${\cal P}$ does not have CBLP was known from \cite{cblp}, but we have shown it here, as well, for the sake of completeness.

${\cal B}({\rm Con}(S))={\rm Con}(S)$, ${\cal B}({\rm Con}(R))={\rm Con}(R)$ and ${\cal B}({\rm Con}(T))={\rm Con}(T)$, thus $S$, $R$ and $T$ have CBLP by Remark \ref{r4.12}. $S/\sigma _1\cong {\cal D}$, which has ${\rm FC}({\cal D})=\{\Delta _{\cal D},\nabla _{\cal D}\}$, thus, by Remark \ref{transpcong}, it follows that ${\rm FC}(S/\sigma _1)=\{\Delta _{S/\sigma _1},\nabla _{S/\sigma _1}\}$, therefore $\sigma _1$ has FCLP by Remark \ref{r4.11}. $S/\sigma _2\cong {\cal L}_2$, thus, as above, it follows that $\sigma _2$ has FCLP. Therefore $S$ has FCLP, as well. $R/\rho _1\cong R/\rho _2\cong {\cal L}_2$, thus, as above, $\rho _1$ and $\rho _2$ have FCLP. $R/\rho _3\cong {\cal L}_3$, thus, by Remark  \ref{transpcong}, ${\rm FC}(R/\rho _3)\cong {\rm FC}({\cal L}_3)\cong {\cal L}_2$, so ${\rm FC}(R/\rho _3)=\{\Delta _{R/\rho _3},\nabla _{R/\rho _3}\}$, hence $\rho _3$ has FCLP by Remark \ref{r4.11}. $R/\rho _4\cong {\cal D}$, hence, as above, it follows that $\rho _4$ has FCLP. $R/\rho _5\cong R/\rho _6\cong S$, thus, by Remark \ref{transpcong}, ${\rm FC}(R/\rho _5)\cong {\rm FC}(R/\rho _6)\cong {\rm FC}(S)\cong {\cal L}_2$, so ${\rm FC}(R/\rho _5)=\{\Delta _{R/\rho _5},\nabla _{R/\rho _5}\}$ and ${\rm FC}(R/\rho _6)=\{\Delta _{R/\rho _6},\nabla _{R/\rho _6}\}$, hence $\rho _5$ and $\rho _6$ have FCLP, by Remark \ref{r4.11}. Therefore $R$ has FCLP, too. $T/\tau _1\cong T/\tau _2\cong {\cal L}_2$, $T/\tau _3\cong {\cal L}_3$, $T/\tau _4\cong {\cal D}$ and $T/\tau _5\cong S$, hence, as above, it follows that $\tau _1$, $\tau _2$, $\tau _3$, $\tau _4$ and $\tau _5$ have FCLP. $T/\tau _6$ is isomorphic to the dual of $S$, hence, by Remark \ref{transpcong}, ${\rm FC}(T/\tau _6)\cong {\rm FC}(S)\cong {\cal L}_2$, so ${\rm FC}(T/\tau _6)=\{\Delta _{T/\tau _6},\nabla _{T/\tau _6}\}$, thus $\tau _6$ has FCLP by Remark \ref{r4.11}. Therefore $T$ has FCLP, too.

$E/\varepsilon \cong {\cal D}$, which has ${\rm FC}({\cal D})={\cal B}({\rm Con}({\cal D}))={\rm Con}({\cal D})=\{\Delta _{\cal D},\nabla _{\cal D}\}$, thus, by Remark \ref{transpcong}, it follows that ${\rm FC}(E/\varepsilon )={\cal B}({\rm Con}(E/\varepsilon ))={\rm Con}(E/\varepsilon )=\{\Delta _{E/\varepsilon },\nabla _{E/\varepsilon }\}$, therefore $\varepsilon $ has CBLP and FCLP by Remark \ref{r4.11}. Hence $E$ has CBLP and FCLP.\label{ex4.14}\end{example}

\begin{remark}\begin{itemize}
\item From Corollary \ref{nicecor} and Example \ref{ex4.14}, it follows that no ordinal sum of lattices in which ${\cal P}$ appears has CBLP or FCLP. This produces both finite and infinite examples of non--modular bounded lattices which have neither CBLP, nor FCLP. Notice, however, that, according to Example \ref{ex4.14}, the non--modular bounded lattice $E$ has both CBLP and FCLP.
\item See also Example \ref{nd} below, featuring a modular bounded lattice without FCLP. Concerning the issue of how to seek for modular bounded lattices without CBLP, note that, by Remark \ref{congrsl2}, the fact that ${\cal B}({\rm Con}({\cal D}))={\rm Con}({\cal D})$, proven in Example \ref{ex}, and Remark \ref{transpcong}, if $n\in \N ^*$, $L_1,\ldots ,L_n$ are finite lattices which are either distributive or isomorphic to ${\cal D}$, so that ${\cal B}({\rm Con}(L_i)={\rm Con}(L_i)$ for all $i\in \overline{1,n}$, and if $L\cong L_1\dotplus \ldots \dotplus L_n$ and $M\cong L_1\times \ldots \times L_n$, then ${\cal B}({\rm Con}(L)={\rm Con}(L)$ and ${\cal B}({\rm Con}(M)={\rm Con}(M)$, thus $L$ and $M$ have CBLP by Remark \ref{r4.12} (see also \cite{cblp}). This holds for infinite ordinal sums and infinite direct products, as well.\end{itemize}\label{z}\end{remark}

\begin{example} Now let us see that the implication in Corollary \ref{compar1} does not hold in bounded non--distributive lattices either. Let us consider the following bounded modular non--distributive lattice: $X={\cal L}_2^2\dotplus {\cal D}$:\vspace*{-18pt}

\begin{center}
\begin{tabular}{ccc}
\begin{picture}(40,130)(0,0)
\put(30,65){\line(0,1){40}}
\put(30,105){\line(1,-1){20}}
\put(30,105){\line(-1,-1){20}}
\put(30,25){\line(1,1){20}}
\put(30,25){\line(-1,1){20}}
\put(10,45){\line(1,1){40}}
\put(50,45){\line(-1,1){40}}
\put(30,25){\circle*{3}}

\put(10,45){\circle*{3}}
\put(50,45){\circle*{3}}
\put(10,85){\circle*{3}}
\put(50,85){\circle*{3}}
\put(30,65){\circle*{3}}
\put(30,85){\circle*{3}}
\put(30,105){\circle*{3}}
\put(2,42){$p$}
\put(2,82){$s$}
\put(53,42){$q$}
\put(53,82){$u$}
\put(35,62){$r$}
\put(33,82){$t$}
\put(28,15){$0$}
\put(28,108){$1$}
\put(2,0){$X={\cal L}_2^2\dotplus {\cal D}$}
\end{picture}
&\hspace*{60pt}
\begin{picture}(60,130)(0,0)
\put(30,30){\line(0,1){20}}
\put(10,50){\line(0,1){20}}
\put(50,50){\line(0,1){20}}
\put(30,70){\line(0,1){20}}
\put(30,30){\line(1,1){20}}
\put(30,30){\line(-1,1){20}}
\put(30,50){\line(1,1){20}}
\put(30,50){\line(-1,1){20}}
\put(30,70){\line(1,-1){20}}
\put(30,70){\line(-1,-1){20}}
\put(30,90){\line(1,-1){20}}
\put(30,90){\line(-1,-1){20}}
\put(30,30){\circle*{3}}
\put(30,50){\circle*{3}}
\put(10,50){\circle*{3}}
\put(50,50){\circle*{3}}
\put(10,70){\circle*{3}}
\put(50,70){\circle*{3}}
\put(30,70){\circle*{3}}
\put(30,90){\circle*{3}}
\put(26,20){$\Delta _X$}

\put(26,93){$\nabla _X$}
\put(-1,48){$\xi _5$}
\put(-1,70){$\xi _1$}
\put(33,47){$\xi _4$}
\put(33,70){$\xi _3$}
\put(53,48){$\xi _6$}
\put(53,70){$\xi _2$}
\put(-20,3){${\rm Con}(X)={\cal B}({\rm Con}(X))$}\end{picture}
&\hspace*{50pt}
\begin{picture}(20,130)(0,0)
\put(6,20){$\Delta _X$}
\put(6,53){$\nabla _X$}
\put(10,30){\line(0,1){20}}
\put(10,30){\circle*{3}}
\put(10,50){\circle*{3}}
\put(0,3){${\rm FC}(X)$}\end{picture}\end{tabular}\end{center}\vspace*{-1pt}

Example \ref{ex} and Remark \ref{congrsl2} show that ${\rm Con}(X)={\cal B}({\rm Con}(X))=\{\Delta _X,\xi _1,\xi _2,\xi _3,\xi _4,\xi _5,\xi _6,\nabla _X\}\cong {\cal L}_2^3$, where $\xi _1=eq(\{0,q\},\{p,r,s,t,u,1\})$, $\xi _2=eq(\{0,p\},\{q,r,s,t,u,1\})$, $\xi _3=eq(\{0,p,q,r\},\{s\},\{t\},\{u\},\{1\})$, $\xi _4=eq(\{0\},\{p\},\{q\},\{r,s,t,u,1\})$, $\xi _5=eq(\{0,q\},\{p,r\},\{s\},\{t\},\{u\},\{1\})$ and $\xi _6=eq(\{0,p\},\{q,r\},\{s\},\{t\},$\linebreak $\{u\},\{1\})$. Thus $X$ has CBLP by Remark \ref{r4.12}. In the Boolean algebra ${\cal B}({\rm Con}(X))$, $\neg \, \xi _1=\xi _6$, $\neg \, \xi _2=\xi _5$ and $\neg \, \xi _3=\xi _4$. Now, if we notice that $(1,0)\in \xi _6\circ \xi _1,\xi _5\circ \xi _2,\xi _3\circ \xi _4$ and $(1,0)\notin \xi _1\circ \xi _6,\xi _2\circ \xi _5,\xi _4\circ \xi _3$, and thus $\xi _6\circ \xi _1\neq \xi _1\circ \xi _6$, $\xi _5\circ \xi _2\neq \xi _2\circ \xi _5$ and $\xi _3\circ \xi _4\neq \xi _4\circ \xi _3$, then we conclude that ${\rm FC}(X)=\{\Delta _X,\nabla _X\}\cong {\cal L}_2$. Now it suffices to observe that $X/\xi _4\cong {\cal L}_2^2$, which is a finite Boolean algebra, hence it has ${\rm FC}({\cal L}_2^2)={\cal B}({\rm Con}({\cal L}_2^2))={\rm Con}({\cal L}_2^2)\cong {\cal L}_2^2$, therefore we have: ${\rm FC}(\xi _4):{\rm FC}(X)\cong {\cal L}_2\rightarrow {\rm FC}(X/\xi _4)\cong {\cal L}_2^2$, thus ${\rm FC}(\xi _4)$ is not surjective, that is $\xi _4$ does not have FCLP, hence $X$ does not have FCLP.\label{nd}\end{example}

\begin{example} Now let us see an example of a lattice with FCLP and without CBLP. Let $H$ be the following non--modular bounded lattice:\vspace*{-6pt}

\begin{center}\begin{tabular}{ccc}
\begin{picture}(40,120)(0,0)
\put(30,25){\line(0,1){60}}
\put(30,25){\line(1,1){50}}
\put(30,25){\line(-1,1){20}}
\put(30,65){\line(1,-1){20}}
\put(30,85){\line(1,1){20}}
\put(30,65){\line(-1,-1){20}}
\put(50,105){\line(1,-1){30}}
\put(3,42){$a$}

\put(33,42){$b$}

\put(53,42){$c$}

\put(23,64){$y$}

\put(23,83){$z$}

\put(83,72){$x$}

\put(30,25){\circle*{3}}
\put(10,45){\circle*{3}}
\put(30,45){\circle*{3}}
\put(50,45){\circle*{3}}
\put(30,65){\circle*{3}}
\put(30,85){\circle*{3}}
\put(50,105){\circle*{3}}
\put(80,75){\circle*{3}}
\put(28,15){$0$}
\put(48,108){$1$}
\put(45,0){$H$}
\end{picture}
&\hspace{90pt}
\begin{picture}(40,120)(0,0)
\put(20,45){\line(1,1){10}}
\put(20,45){\line(-1,1){10}}
\put(20,65){\line(1,-1){10}}
\put(20,65){\line(-1,-1){10}}
\put(20,45){\line(0,-1){15}}
\put(16,20){$\Delta _H$}
\put(16,68){$\nabla _H$}
\put(20,30){\circle*{3}}
\put(-2,53){$\chi _1$}
\put(33,53){$\chi _2$}
\put(23,40){$\chi _3$}
\put(20,45){\circle*{3}}
\put(10,55){\circle*{3}}
\put(30,55){\circle*{3}}
\put(20,65){\circle*{3}}
\put(4,0){${\rm Con}(H)$}
\end{picture}
&\hspace{70pt}
\begin{picture}(120,120)(0,0)
\put(30,25){\line(0,1){60}}
\put(30,25){\line(1,1){60}}
\put(30,25){\line(-1,1){30}}
\put(0,55){\line(1,1){60}}
\put(60,55){\line(-1,1){30}}
\put(60,115){\line(1,-1){30}}
\put(30,25){\circle*{3}}
\put(0,55){\circle*{3}}
\put(30,55){\circle*{3}}
\put(60,55){\circle*{3}}
\put(30,85){\circle*{3}}
\put(60,115){\circle*{3}}
\put(90,85){\circle*{3}}
\put(22,15){$0/\chi _3$}

\put(54,118){$1/\chi _3$}
\put(-23,53){$a/\chi _3$}

\put(33,53){$b/\chi _3$}

\put(63,53){$c/\chi _3$}

\put(-28,83){$y/\chi _3=z/\chi _3$}

\put(93,82){$x/\chi _3$}

\put(40,0){$H/\chi _3$}
\end{picture}\end{tabular}\end{center}

It is easy to obtain, by using Remark \ref{congrsl1} and the calculations in Example \ref{ex}, that ${\rm Con}(H)=\{\Delta _H,\chi _1,\chi _2,\chi _3,$\linebreak $\nabla _H\}$, with the lattice structure represented above, where $\chi _1=eq(\{0,a,b,c,y,z\},\{x,1\})$, $\chi _2=eq(\{0\},\{a\},\{b\},$\linebreak $\{c,x\},\{y,z,1\})$ and $\chi _3=eq(\{0\},\{a\},\{b\},\{c\},\{x\},\{y,z\},\{1\})$. Thus ${\rm FC}(H)={\cal B}({\rm Con}(H))=\{\Delta _H,\nabla _H\}\cong {\cal L}_2$. $H/\chi _1\cong {\cal L}_2$ and $H/\chi _2\cong {\cal D}$, so, just as in Example \ref{ex4.14}, $\chi _1$ and $\chi _2$ have CBLP and FCLP.

$H/\chi _3$ has the Hasse diagram above. Remark \ref{congrsl1} and Example \ref{ex} make it easy to obtain that ${\rm Con}(H/\chi _3)={\cal B}({\rm Con}(H/\chi _3))=\{\Delta _{H/\chi _3},\nu ,\pi ,\nabla _{H/\chi _3}\}\cong {\cal L}_2^2$, with $\nu =eq(\{0/\chi _3,a/\chi _3,b/\chi _3,c/\chi _3,y/\chi _3\},\{x/\chi _3,1/\chi _3\})$ and $\pi =eq(\{0/\chi _3\},\{a/\chi _3\},\{b/\chi _3\},\{c/\chi _3,x/\chi _3\},\{y/\chi _3,1/\chi _3\})$, so with $\pi =\neg \, \nu $. Since $(0/\chi _3,1/\chi _3)\in \pi \circ \nu $, but $(0/\chi _3,1/\chi _3)\notin \nu \circ \pi $, it follows that $\pi \circ \nu \neq \nu \circ \pi $, hence ${\rm FC}(H/\chi _3)=\{\Delta _{H/\chi _3},\nabla _{H/\chi _3}\}\cong {\cal L}_2$, so $\chi _3$ has FCLP by Remark \ref{r4.11}. But ${\cal B}(u_{H/\chi _3}):{\cal B}({\rm Con}(H))\cong {\cal L}_2\rightarrow {\cal B}({\rm Con}(H/\chi _3))\cong {\cal L}_2^2$, thus ${\cal B}(u_{H/\chi _3})$ is not surjective, that is $\chi _3$ does not have CBLP. Therefore $H$ has FCLP, but it does not have CBLP.\label{exc}\end{example}

\begin{corollary} FCLP does not imply CBLP.\label{compar2}\end{corollary}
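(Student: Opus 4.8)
The plan is to produce a single congruence--distributive algebra that has FCLP but fails CBLP; lattices are the natural test class, since every lattice is congruence--distributive and the congruence lattice of a small lattice can be written down by hand. The conceptual driver is that FCLP is governed by ${\rm FC}(A)$, while CBLP is governed by the possibly larger Boolean algebra ${\cal B}({\rm Con}(A))$; these coincide in arithmetical algebras (Proposition \ref{proposition4.2}), so any separating example must be non--arithmetical, and hence necessarily not congruence--permutable. The mechanism I would exploit is that, in a non--permutable lattice, a quotient $A/\theta $ may acquire complemented congruences that do not permute with their complements; such congruences lie in ${\cal B}({\rm Con}(A/\theta ))$ but not in ${\rm FC}(A/\theta )$. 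If one can arrange that ${\rm FC}(A/\theta )=\{\Delta _{A/\theta },\nabla _{A/\theta }\}$ for every $\theta $, while some particular $\theta $ has ${\cal B}({\rm Con}(A/\theta ))$ strictly larger than the image of ${\cal B}({\rm Con}(A))$ under ${\cal B}(u_{\theta })$, then by Remark \ref{r4.11} every congruence has FCLP (lifting into a two--element ${\rm FC}(A/\theta )$ is automatic), yet that $\theta $ fails CBLP.

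Concretely, I would take the non--modular bounded lattice $H$ of Example \ref{exc} and proceed as follows. First I would compute ${\rm Con}(H)=\{\Delta _H,\chi _1,\chi _2,\chi _3,\nabla _H\}$ and check that its only complemented elements are $\Delta _H$ and $\nabla _H$, so that ${\rm FC}(H)={\cal B}({\rm Con}(H))=\{\Delta _H,\nabla _H\}$. Next I would identify the quotients: $H/\chi _1\cong {\cal L}_2$ and $H/\chi _2\cong {\cal D}$, both of which have a two--element ${\rm FC}$, so $\chi _1$ and $\chi _2$ have FCLP by Remark \ref{r4.11}; the decisive quotient is $H/\chi _3$, for which Remark \ref{congrsl1} and the congruence computations of Example \ref{ex} give ${\rm Con}(H/\chi _3)={\cal B}({\rm Con}(H/\chi _3))\cong {\cal L}_2^2$, with two nontrivial complemented congruences $\nu ,\pi $ that are mutual complements.

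The separation then rests on a single explicit check: exhibiting a pair, namely $(0/\chi _3,1/\chi _3)\in \pi \circ \nu $ while $(0/\chi _3,1/\chi _3)\notin \nu \circ \pi $, which shows $\nu \circ \pi \neq \pi \circ \nu $ and hence that $\nu ,\pi \notin {\rm FC}(H/\chi _3)$; consequently ${\rm FC}(H/\chi _3)=\{\Delta _{H/\chi _3},\nabla _{H/\chi _3}\}$. From this, $\chi _3$ has FCLP (its target ${\rm FC}(H/\chi _3)$ is two--element, so ${\rm FC}(\chi _3)$ is trivially surjective), while ${\cal B}(u_{\chi _3}):{\cal B}({\rm Con}(H))\cong {\cal L}_2\to {\cal B}({\rm Con}(H/\chi _3))\cong {\cal L}_2^2$ cannot be surjective, so $\chi _3$ fails CBLP. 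Collecting the cases, $H$ has FCLP but not CBLP, which proves the corollary. The main obstacle is exactly this last computation: verifying that the two complemented congruences of $H/\chi _3$ genuinely fail to permute, since this is the precise point at which the non--congruence--permutability of $H$ is converted into the gap between ${\rm FC}$ and ${\cal B}({\rm Con})$ that drives the whole example.
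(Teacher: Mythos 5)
Your proposal is correct and coincides with the paper's own argument: the paper proves Corollary \ref{compar2} exactly via the lattice $H$ of Example \ref{exc}, with the same quotient analysis ($H/\chi _1\cong {\cal L}_2$, $H/\chi _2\cong {\cal D}$, ${\rm Con}(H/\chi _3)\cong {\cal L}_2^2$) and the same decisive witness $(0/\chi _3,1/\chi _3)\in \pi \circ \nu \setminus \nu \circ \pi $, yielding ${\rm FC}(H/\chi _3)=\{\Delta _{H/\chi _3},\nabla _{H/\chi _3}\}$ so that $\chi _3$ has FCLP by Remark \ref{r4.11} while ${\cal B}(u_{\chi _3}):{\cal L}_2\rightarrow {\cal L}_2^2$ is not surjective. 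Your heuristic framing (any separating example must be non--arithmetical, with the gap between ${\cal B}({\rm Con})$ and ${\rm FC}$ opened by non--permuting complements in a quotient) is a faithful account of why the paper's example works.
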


\begin{proof} By Example \ref{exc}.\end{proof}

\begin{proposition}\begin{enumerate}
\item\label{nimpl1} If $A$ has FCLP, then its subalgebras do not necessarily have FCLP. The same goes for CBLP instead of FCLP.
\item\label{nimpl2} The fact that all proper subalgebras of $A$ have FCLP does not imply that $A$ has FCLP.
\item\label{nimpl3} The fact that all proper quotient algebras of $A$ have FCLP does not imply that $A$ has FCLP.\end{enumerate}\label{nimpl}\end{proposition}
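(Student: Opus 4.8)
The plan is to settle all three non--implications by counterexamples taken from the lattices already computed in Examples \ref{ex} and \ref{ex4.14}, the pentagon ${\cal P}$ serving as the universal source of failure, since it has neither CBLP nor FCLP.

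For (\ref{nimpl1}) I would use the lattice $E$, which has both CBLP and FCLP by Example \ref{ex4.14}. The crucial observation is that $N=\{0,a,b,d,1\}$ is a bounded sublattice (hence a subalgebra) of $E$: reading off the Hasse diagram of $E$ one gets $a\wedge b=a\wedge d=0$, $a\vee b=a\vee d=1$, $b\wedge d=b$ and $b\vee d=d$, so $N$ is closed under $\vee$ and $\wedge$; ordered by $0<b<d<1$ with $a$ incomparable to both $b$ and $d$ and $0<a<1$, it is isomorphic to ${\cal P}$. As FCLP and CBLP are invariant under isomorphism (Remark \ref{transpcong}) and ${\cal P}$ has neither (Example \ref{ex4.14}), the subalgebra $N$ of $E$ has neither; this disposes of (\ref{nimpl1}) for both properties at once.

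For (\ref{nimpl2}) I would take $A={\cal P}$, which lacks FCLP by Example \ref{ex4.14}. Every proper subalgebra is a bounded sublattice of ${\cal P}$ with at most four elements, hence a chain or a copy of ${\cal L}_2^2$; explicitly, the four--element ones are $\{0,x,y,1\}\cong{\cal L}_2^2$, $\{0,x,z,1\}\cong{\cal L}_2^2$ and $\{0,y,z,1\}\cong{\cal L}_4$, and the smaller ones are ${\cal L}_2$ and ${\cal L}_3$. Each of these has FCLP: ${\cal L}_2^2$ by Proposition \ref{fclpabvd}, ${\cal L}_2$ and ${\cal L}_3$ by Example \ref{ex4.14}, and any finite chain ${\cal L}_n$ since a chain admits no non--trivial direct product decomposition, so by Proposition \ref{proposition3.6} it has ${\rm FC}({\cal L}_n)=\{\Delta_{{\cal L}_n},\nabla_{{\cal L}_n}\}$, while each of its quotients is again a chain; Remark \ref{r4.11} then yields FCLP. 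Thus all proper subalgebras of ${\cal P}$ have FCLP even though ${\cal P}$ does not.

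For (\ref{nimpl3}) I would once more take $A={\cal P}$. Its failure of FCLP is caused solely by the congruence $\gamma$; yet the proper quotients are all well behaved, namely ${\cal P}/\alpha\cong{\cal P}/\beta\cong{\cal L}_2$, ${\cal P}/\gamma\cong{\cal L}_2^2$ and ${\cal P}/\nabla_{\cal P}\cong{\cal L}_1$, each of which has FCLP. Hence all proper quotient algebras of ${\cal P}$ have FCLP although ${\cal P}$ does not. The computations are routine; the only points requiring attention are the identification of the pentagon $N$ inside $E$ for (\ref{nimpl1}) and the contrast in (\ref{nimpl3}) between ${\cal P}$ failing FCLP through $\gamma$ and the quotient ${\cal P}/\gamma\cong{\cal L}_2^2$ having FCLP in its own right. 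I do not anticipate any real obstacle, since every FCLP/CBLP status needed is already recorded in Examples \ref{ex} and \ref{ex4.14}.
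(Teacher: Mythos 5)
Your proof is correct; for part (\ref{nimpl1}) it is essentially the paper's own argument, made more explicit: the paper simply says that $E$ has CBLP and FCLP but has sublattices isomorphic to ${\cal P}$, which has neither, and your identification of $N=\{0,a,b,d,1\}$ with the verification of closure under $\vee,\wedge$ fills in exactly what the paper leaves to the reader. The genuine divergence is in parts (\ref{nimpl2}) and (\ref{nimpl3}): the paper takes $L={\cal L}_2\dotplus {\cal L}_2^2$ for both, whose failure of FCLP is imported via Remark \ref{clllps} (ultimately from the BLP literature on bounded distributive lattices), and for (\ref{nimpl3}) it additionally offers the non--distributive $X$ of Example \ref{nd}; you instead use ${\cal P}$ itself for both parts, whose failure of FCLP is computed inside the paper (Example \ref{ex4.14}: ${\rm FC}({\cal P})\cong {\cal L}_2$ cannot surject onto ${\rm FC}({\cal P}/\gamma )\cong {\cal L}_2^2$). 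This substitution is sound, but it forces you to handle the proper subalgebra $\{0,y,z,1\}\cong {\cal L}_4$, which is not covered by Examples \ref{ex} and \ref{ex4.14}; your auxiliary lemma that every finite chain has ${\rm FC}=\{\Delta ,\nabla \}$ --- via direct indecomposability of chains, Proposition \ref{proposition3.6} applied to a putative nontrivial factor congruence and its complement, the fact that quotients of chains are chains, and then Remark \ref{r4.11} --- is a complete and correct way to close that gap (one could alternatively note that chains are bounded distributive lattices in which FCLP equals BLP, and BLP is trivial when every quotient has Boolean center $\{0,1\}$). Your classification of the proper subalgebras of ${\cal P}$ is exhaustive in either signature, bounded or not, since every lattice with at most four elements is a chain or ${\cal L}_2^2$. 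What each approach buys: the paper's $L$ keeps the relevant subalgebras and quotients (essentially) Boolean at the price of an externally sourced counterexample, whereas your pentagon-based argument is self-contained within the Section \ref{examples} computations at the price of the chain lemma; and your closing observation that $\gamma $ fails FCLP even though ${\cal P}/\gamma $ has FCLP in its own right correctly isolates the point that FCLP concerns surjectivity of the lifting morphism ${\rm FC}(\theta )$, not any intrinsic property of the quotient.
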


\begin{proof} (\ref{nimpl1}) By Example \ref{ex4.14}, $E$ has CBLP and FCLP, although it has sublattices isomorphic to ${\cal P}$, which has neither CBLP, nor FCLP. The fact on CBLP was known from \cite{cblp}.

\noindent (\ref{nimpl2}), (\ref{nimpl3}) Let $L={\cal L}_2\dotplus {\cal L}_2^2$. As pointed out in Remark \ref{clllps}, $L$ does not have FCLP. Every proper subalgebra and every proper quotient algebra of $L$ is isomorphic to one of the Boolean algebras ${\cal L}_2$ and ${\cal L}_2^2$, which have FCLP by 
Proposition \ref{fclpabvd}. For (\ref{nimpl3}) we can provide a non--distributive example, too: the lattice $X$ in Example \ref{nd} does not have FCLP, but each of its proper quotient algebras is isomorphic to ${\cal L}_2$, ${\cal L}_2^2$, ${\cal D}$ or the dual of $S$ from Example \ref{ex}, and all these lattices have FCLP, by Example \ref{ex4.14} and the fact that FCLP is self--dual.\end{proof}

\begin{proposition}\begin{enumerate}
\item\label{spec1} Any maximal congruence of $A$ has FCLP and CBLP.
\item\label{spec2} Any prime congruence of $A$ has FCLP and CBLP.\end{enumerate}\label{spec}\end{proposition}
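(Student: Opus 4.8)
The plan is to reduce both parts to the same criterion. By the fourth bullet of Remark~\ref{r4.11}, if ${\cal B}({\rm Con}(A/\theta))=\{\Delta_{A/\theta},\nabla_{A/\theta}\}$, then $\theta$ automatically has both CBLP and FCLP. So for a congruence $\theta\in{\rm Max}(A)\cup{\rm Spec}(A)$ it suffices to verify that the Boolean center of ${\rm Con}(A/\theta)$ is the two-element Boolean algebra. I would carry this out separately for the two types of congruence, exploiting throughout the bounded lattice isomorphism $s_\theta\colon{\rm Con}(A/\theta)\to[\theta)$ (which sends $\Delta_{A/\theta}$ to $\theta$ and $\nabla_{A/\theta}$ to $\nabla_A$) recalled in Section~\ref{preliminaries}.

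For part~(\ref{spec1}) the argument is immediate. If $\theta$ is maximal, i.e. a maximal element of ${\rm Con}(A)\setminus\{\nabla_A\}$, then there is no congruence strictly between $\theta$ and $\nabla_A$, so $[\theta)=\{\theta,\nabla_A\}$. Transporting this along $s_\theta$ gives ${\rm Con}(A/\theta)=\{\Delta_{A/\theta},\nabla_{A/\theta}\}$, a two-element lattice that is its own Boolean center. The reduction of the first paragraph then applies and $\theta$ has CBLP and FCLP.

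For part~(\ref{spec2}) I would pull a complemented congruence of $A/\theta$ back through $s_\theta$ and then invoke primeness. Let $\gamma\in{\cal B}({\rm Con}(A/\theta))$ with Boolean complement $\neg\,\gamma$, and set $\psi=s_\theta(\gamma)$ and $\psi'=s_\theta(\neg\,\gamma)$, so $\psi,\psi'\in[\theta)$, $\gamma=\psi/\theta$ and $\neg\,\gamma=\psi'/\theta$. Since $s_\theta$ preserves meets and sends the bottom to the bottom, the relation $\gamma\cap\neg\,\gamma=\Delta_{A/\theta}$ translates into $\psi\cap\psi'=\theta$, hence $\psi\cap\psi'\subseteq\theta$. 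As $\theta$ is prime, this forces $\psi\subseteq\theta$ or $\psi'\subseteq\theta$; but $\theta\subseteq\psi$ and $\theta\subseteq\psi'$, so in fact $\psi=\theta$ or $\psi'=\theta$, i.e. $\gamma=\Delta_{A/\theta}$ or $\neg\,\gamma=\Delta_{A/\theta}$ (equivalently $\gamma=\nabla_{A/\theta}$). Therefore ${\cal B}({\rm Con}(A/\theta))=\{\Delta_{A/\theta},\nabla_{A/\theta}\}$, and the reduction again yields CBLP and FCLP.

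I do not expect a genuine obstacle; the only point that needs care is the transfer of the complementation relation through $s_\theta$, namely that $\gamma\cap\neg\,\gamma=\Delta_{A/\theta}$ corresponds to $\psi\cap\psi'=\theta$, after which the defining property of a prime congruence finishes the argument. I would also note that, since coatoms are meet-irreducible and, in the distributive lattice ${\rm Con}(A)$, meet-irreducible coincides with meet-prime, every maximal congruence is in fact prime; thus part~(\ref{spec1}) is formally subsumed by part~(\ref{spec2}), although the direct argument for maximal congruences given above is shorter and worth keeping.
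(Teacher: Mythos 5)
Your proof is correct and takes essentially the same approach as the paper's: both parts are reduced, via Remark~\ref{r4.11}, to showing ${\cal B}({\rm Con}(A/\theta ))=\{\Delta _{A/\theta },\nabla _{A/\theta }\}$, and your part~(\ref{spec2}) argument (pulling a complemented $\gamma $ back through $s_{\theta }$, translating $\gamma \cap \neg \, \gamma =\Delta _{A/\theta }$ into $\psi \cap \psi ^{\prime }=\theta $, and invoking primeness) is exactly the paper's computation in ${\cal B}([\theta ))$ transported along the isomorphism $s_{\theta }$. Your closing observation that maximal congruences are meet--prime in the distributive lattice ${\rm Con}(A)$, so that part~(\ref{spec1}) is formally subsumed by part~(\ref{spec2}), is a correct small bonus not stated in the paper.
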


\begin{proof} The statements on CBLP are known from \cite{cblp}, but also follow from the next arguments.

\noindent (\ref{spec1}) Let $\theta \in {\rm Max}(A)$. Then $[\theta )=\{\theta ,\nabla _A\}$, thus ${\rm Con}(A/\theta )=\{\theta /\theta,\nabla _A/\theta \}=\{\Delta _{A/\theta },\nabla _{A/\theta }\}$ since $s_{\theta }$ is a bounded lattice isomorphism, hence ${\rm FC}(A/\theta )={\cal B}({\rm Con}(A/\theta ))=\{\Delta _{A/\theta },\nabla _{A/\theta }\}$, therefore $\theta $ has CBLP and FCLP by Remark \ref{r4.11}.

\noindent (\ref{spec2}) Let $\theta \in {\rm Spec}(A)$, $\alpha \in {\cal B}([\theta ))$ and $\beta =\neg _{\theta }\alpha \in {\cal B}( [\theta ))$, so that $\alpha \cap \beta =\theta $ and $\alpha \vee \beta =\nabla _A$. Thus $\alpha \cap \beta \subseteq \theta $ hence $\alpha \subseteq \theta $ and $\beta \subseteq \theta $ since $\theta \in {\rm Spec}(A)$. But $\alpha ,\beta \in {\cal B}([\theta ))$, that is $\theta \subseteq \alpha $ and $\theta \subseteq \beta $. Therefore $\alpha =\theta $ or $\beta =\theta $. If $\alpha =\theta $, then $\beta =\neg _{\theta }\theta =\nabla _A$; if $\beta =\theta $, then $\alpha =\neg _{\theta }\theta =\nabla _A$. Hence ${\cal B}([\theta ))=\{\theta ,\nabla _A\}$, thus ${\cal B}({\rm Con}(A/\theta ))=\{\theta /\theta,\nabla _A/\theta \}=\{\Delta _{A/\theta },\nabla _{A/\theta }\}$ since ${\cal B}(s_{\theta })$ is a Boolean isomorphism, so ${\rm FC}(A/\theta )=\{\Delta _{A/\theta },\nabla _{A/\theta }\}$, therefore $\theta $ has CBLP and FCLP by Remark \ref{r4.11}.\end{proof}

\begin{proposition} If $A$ is local and its maximal congruence includes all its proper congruences, then $A$ has FCLP and CBLP.\label{local}\end{proposition}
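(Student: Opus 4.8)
The plan is to reduce the whole statement to the two normality characterizations already at our disposal, Proposition \ref{cblpbn} (CBLP $\Leftrightarrow $ B--normal) and Proposition \ref{fclpfcn} (FCLP $\Leftrightarrow $ FC--normal). Write $\mu $ for the unique maximal congruence of the local algebra $A$; the hypothesis says precisely that every proper congruence of $A$, that is every $\phi \in {\rm Con}(A)$ with $\phi \neq \nabla _A$, satisfies $\phi \subseteq \mu $, and of course $\mu \neq \nabla _A$ since $\mu $ is maximal. The single fact that drives both halves of the argument is the following observation, which I would isolate first: if $\phi ,\psi \in {\rm Con}(A)$ satisfy $\phi \vee \psi =\nabla _A$, then $\phi =\nabla _A$ or $\psi =\nabla _A$. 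Indeed, were both $\phi $ and $\psi $ proper, the hypothesis would give $\phi \subseteq \mu $ and $\psi \subseteq \mu $, whence $\phi \vee \psi \subseteq \mu \subsetneq \nabla _A$, contradicting $\phi \vee \psi =\nabla _A$.

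For CBLP I would verify directly that $A$ is B--normal. Let $\phi ,\psi \in {\rm Con}(A)$ with $\phi \vee \psi =\nabla _A$; by the observation one of them equals $\nabla _A$. If $\phi =\nabla _A$, take $\alpha =\Delta _A$ and $\beta =\nabla _A$; if $\psi =\nabla _A$, take $\alpha =\nabla _A$ and $\beta =\Delta _A$. In either case $\alpha ,\beta \in {\cal B}({\rm Con}(A))$, $\alpha \cap \beta =\Delta _A$ and $\phi \vee \alpha =\psi \vee \beta =\nabla _A$. Hence $A$ is B--normal and has CBLP by Proposition \ref{cblpbn}. For FCLP I would argue FC--normality in exactly the same manner: given $\phi \circ \psi =\nabla _A$, recall from the preliminaries that this forces $\phi \vee \psi =\nabla _A$, so the observation again makes one of $\phi ,\psi $ equal to $\nabla _A$; taking $\alpha ,\beta \in \{\Delta _A,\nabla _A\}\subseteq {\rm FC}(A)$ as above, one has $\alpha \cap \beta =\Delta _A$ and $\phi \circ \alpha =\psi \circ \beta =\nabla _A$, using $\nabla _A\circ \gamma =\gamma \circ \nabla _A=\nabla _A$ for every $\gamma $. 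Thus $A$ is FC--normal and has FCLP by Proposition \ref{fclpfcn}.

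There is essentially no genuine obstacle here: the mathematical content is entirely the key observation, which is an immediate consequence of $\mu $ dominating every proper congruence. The only point demanding a little care is the bookkeeping in the definitions of B--normality and FC--normality, where $\alpha $ is attached to $\phi $ and $\beta $ to $\psi $, so that one must pick the correct member of $\{\Delta _A,\nabla _A\}$ on each side according to which of $\phi ,\psi $ is $\nabla _A$. As an alternative route, I could instead deduce from the observation that ${\cal B}({\rm Con}(A))=\{\Delta _A,\nabla _A\}$, and then, via the lattice isomorphism ${\rm Con}(A/\theta )\cong [\theta )$ together with the same domination argument inside $[\theta )$, show ${\cal B}({\rm Con}(A/\theta ))=\{\Delta _{A/\theta },\nabla _{A/\theta }\}$ for every $\theta $, concluding through Remark \ref{r4.11}; but the normality argument is shorter and avoids the separate treatment of the case $\theta =\nabla _A$.
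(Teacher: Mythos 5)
Your proof is correct, and its mathematical core coincides with the paper's: both reduce the statement to the normality characterizations (Propositions \ref{cblpbn} and \ref{fclpfcn}), both rest on the dichotomy that two congruences joining (or composing) to $\nabla _A$ cannot both be proper, and both pick witnesses from $\{\Delta _A,\nabla _A\}$. The differences are minor but worth recording. For CBLP the paper simply cites \cite{cblp}, whereas you give a self-contained B--normality argument; this costs two lines and makes the proposition independent of the earlier paper. For the dichotomy, you argue from $\phi \vee \psi =\nabla _A$ (both proper would force $\phi \vee \psi \subseteq \mu \subsetneq \nabla _A$, since $\mu $ is a congruence containing $\phi \cup \psi $), while the paper argues from $\phi \circ \psi =\nabla _A$ via the transitivity of $\mu $, namely $\nabla _A=\phi \circ \psi \subseteq \mu \circ \mu \subseteq \mu \subsetneq \nabla _A$; your version is marginally more general and serves both halves at once, and you correctly invoke the preliminary inclusion $\phi \circ \psi \subseteq \phi \vee \psi $ to pass from composition to join in the FCLP half. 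You verify FC--normality against the two-witness Definition \ref{fcn} directly, where the paper uses the single-witness reformulation with $\neg \, \alpha $ from Remark \ref{fccuneg}; both are legitimate. Finally, your caution about the ``bookkeeping'' of which member of $\{\Delta _A,\nabla _A\}$ is attached to which congruence is well placed: the paper's proof, having assumed $\phi =\nabla _A$, writes ``let $\alpha =\nabla _A$'' but then computes $\phi \vee \alpha =\nabla _A\vee \Delta _A$ and $\psi \vee \neg \, \alpha =\psi \vee \neg \, \Delta _A=\psi \vee \nabla _A$, so the intended witness is plainly $\alpha =\Delta _A$ --- a typo that your explicit case split avoids.
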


\begin{proof} The result on CBLP is known from \cite{cblp}. Assume that ${\rm Max}(A)=\{\mu \}$ and ${\rm Con}(A)=(\mu ]\cup \{\nabla _A\}$. We shall prove that $A$ is FC--normal. Let $\phi ,\psi \in {\rm Con}(A)$ such that $\phi \circ \psi =\nabla _A$. Assume by absurdum that $\phi \neq \nabla _A$ and $\psi \neq \nabla _A$. Then $\phi \subseteq \mu $ and $\psi \subseteq \mu $, thus, by the transitivity of $\mu $, $\nabla _A=\phi \circ \psi \subseteq \mu \circ \mu \subseteq \mu \subsetneq \nabla _A$, so we have a contradiction. Hence $\phi =\nabla _A$ or $\psi =\nabla _A$. We may assume that $\phi =\nabla _A$, without loss of generality. Let $\alpha =\nabla _A$. Then $\alpha \in {\rm FC}(A)$ and the following hold: $\phi \vee \alpha =\nabla _A\vee \Delta _A=\nabla _A$ and $\psi \vee \neg \, \alpha =\psi \vee \neg \, \Delta _A=\psi \vee \nabla _A=\nabla _A$. Therefore $A$ is FC--normal, by Remark \ref{fccuneg}. Hence $A$ has FCLP, by Proposition \ref{fclpfcn}.\end{proof}

\begin{corollary} If $A$ is local and $\nabla 
_A$ is finitely generated, then $A$ has FCLP and CBLP.\label{corlocal}\end{corollary}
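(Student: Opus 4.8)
The plan is to deduce the corollary from Proposition \ref{local}. Since $A$ is local, write ${\rm Max}(A)=\{\mu \}$. By Proposition \ref{local}, it suffices to show that $\mu $ includes every proper congruence of $A$, that is ${\rm Con}(A)=(\mu ]\cup \{\nabla _A\}$; once this is established, FCLP and CBLP follow immediately. So the entire task reduces to proving that, under the hypothesis that $\nabla _A$ is finitely generated, every $\phi \in {\rm Con}(A)\setminus \{\nabla _A\}$ satisfies $\phi \subseteq \mu $.

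First I would record, as noted in the remark following Definition \ref{defcblp}, that $\nabla _A$ being finitely generated is equivalent to $\nabla _A$ being a compact element of the complete lattice ${\rm Con}(A)$. The heart of the argument is then the purely lattice--theoretic claim that, when the top element $\nabla _A$ is compact, every proper congruence lies below some maximal congruence. To prove this for a fixed proper $\phi $, I would apply Zorn's lemma to the poset $\mathcal{P}=\{\psi \in {\rm Con}(A)\ |\ \phi \subseteq \psi \subsetneq \nabla _A\}$, which is nonempty since $\phi \in \mathcal{P}$.

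Given a chain $\mathcal{C}\subseteq \mathcal{P}$, its join $\sigma =\bigvee \mathcal{C}$ is a congruence containing $\phi $, and the key step is to verify $\sigma \neq \nabla _A$, which is exactly where compactness is used. Indeed, if $\sigma =\nabla _A$, then by compactness there are finitely many $\psi _1,\ldots ,\psi _k\in \mathcal{C}$ with $\nabla _A=\psi _1\vee \ldots \vee \psi _k$; since $\mathcal{C}$ is a chain, this finite join equals $\max \{\psi _1,\ldots ,\psi _k\}$, an element of $\mathcal{C}\subseteq \mathcal{P}$ and hence proper, a contradiction. Thus $\sigma \in \mathcal{P}$ is an upper bound of $\mathcal{C}$, and Zorn's lemma furnishes a maximal element $\nu $ of $\mathcal{P}$. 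A short check shows $\nu $ is actually maximal in $({\rm Con}(A)\setminus \{\nabla _A\},\subseteq )$: if $\nu \subsetneq \xi \subsetneq \nabla _A$, then $\xi \in \mathcal{P}$ strictly above $\nu $, contradicting maximality of $\nu $ in $\mathcal{P}$. Hence $\nu \in {\rm Max}(A)$, so $\nu =\mu $ by locality, and therefore $\phi \subseteq \nu =\mu $.

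Since $\phi $ was an arbitrary proper congruence, this gives ${\rm Con}(A)=(\mu ]\cup \{\nabla _A\}$, and Proposition \ref{local} applies to conclude that $A$ has FCLP and CBLP. The only delicate point I anticipate is the verification that the join of a chain in $\mathcal{P}$ remains proper; everything else (the final application of Proposition \ref{local}, the reduction to maximality in the full lattice) is routine, and it is precisely the compactness of $\nabla _A$ that makes the Zorn argument go through, whereas without finite generation the join of an increasing chain of proper congruences could reach $\nabla _A$.
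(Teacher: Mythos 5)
Your proposal is correct and follows exactly the paper's route: reduce to Proposition \ref{local} via the fact that when $\nabla_A$ is finitely generated (equivalently, compact in ${\rm Con}(A)$) every proper congruence lies below a maximal one, which by locality must be $\mu$. The only difference is that the paper cites this fact as well known (from Burris--Sankappanavar) while you spell out the standard Zorn's lemma argument, and your compactness step for chains is carried out correctly.
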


\begin{proof} The result on CBLP is known from \cite{cblp}, but also follows from the next argument. It is well known (\cite{bur}) and straightforward that, if $\nabla _A$ is finitely generated, then any proper congruence of $A$ is included in a maximal congruence of $A$. Now apply Proposition \ref{local}.\end{proof}

\begin{corollary} If $A$ is local and finite, then $A$ has FCLP and CBLP.\label{clocal}\end{corollary}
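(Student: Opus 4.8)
The plan is to reduce this statement to Corollary \ref{corlocal}, which already establishes that a local algebra whose top congruence $\nabla _A$ is finitely generated has both FCLP and CBLP. Since $A$ is assumed to be local, the only thing left to verify is that the finiteness of $A$ forces $\nabla _A$ to be a finitely generated congruence; once this is in hand, the conclusion is immediate.

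First I would observe that, because $A$ is finite, the set $A^2$ is finite, and hence so is the congruence $\nabla _A=A^2$. I would then invoke the remark recorded in Section \ref{preliminaries}, namely that any congruence $\theta $ satisfies $\theta =Cg_A(\theta )$, so that a \emph{finite} congruence is automatically finitely generated, being generated by itself as a finite subset of $A^2$. Applying this to $\theta =\nabla _A$ shows that $\nabla _A$ is a finitely generated congruence of $A$ (equivalently, a compact element of ${\rm Con}(A)$).

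Having established that $A$ is local and that $\nabla _A$ is finitely generated, I would simply apply Corollary \ref{corlocal} to conclude that $A$ has FCLP and CBLP. I do not expect any genuine obstacle here: the corollary is an immediate specialization of the preceding one, the single (and entirely routine) point being the passage from finiteness of the algebra to finite generation of its largest congruence.
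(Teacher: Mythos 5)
Your proposal is correct and follows exactly the paper's own argument: the paper likewise notes that finiteness of $A$ makes $\nabla_A = A^2$ finite, hence finitely generated (via the observation $\theta = Cg_A(\theta)$ from the preliminaries), and then applies Corollary \ref{corlocal}. There is nothing to add; the reduction is the intended one.
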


\begin{proof} If $A$ is finite, then $\nabla _A=A^2$ is finite and thus finitely generated. Now apply Corollary \ref{corlocal}.\end{proof}

\end{document}